\newcommand{\tl}{\triangleleft}
\DeclareMathOperator{\id}{id}
\DeclareMathOperator{\Hom}{Hom}
\DeclareMathOperator{\im}{Im}
\DeclareMathOperator{\Diff}{Diff}
\DeclareMathOperator{\GL}{GL}
\DeclareMathOperator{\Aut}{Aut}
\DeclareMathOperator{\pt}{pt}
\DeclareMathOperator{\GA}{GA}
\DeclareMathOperator{\Span}{Span}
\DeclareMathOperator{\Lie}{Lie}
\DeclareMathOperator{\pr}{pr}
\newtheorem{Thm}{Theorem}[section]
\newtheorem{Pro}[Thm]{Proposition}
\newtheorem{Lem}[Thm]{Lemma}
\newtheorem{Cor}[Thm]{Corollary}
\newtheorem{Def-Pro}[Thm]{Definition-Proposition}
\newtheorem{Def}[Thm]{Definition}
\theoremstyle{definition}
\newtheorem{Ex}[Thm]{Example}
\newtheorem{Rm}[Thm]{Remark}
\begin{document}

\title{Double Principal Bundles}

\author{Honglei Lang}
\address{Department of Applied Mathematics, China Agricultural University, Beijing, China}
\email{hllang@cau.edu.cn}
\author{Yanpeng Li}
\address{Department of Mathematics, Sichuan University, Chengdu, China}
\email{yanpeng.zero@gmail.com}
\author{Zhangju Liu}
\address{Department of Mathematics, Henan University, Henan, China}
\email{liuzj@pku.edu.cn}

\footnotetext{\emph{Keywords:}Double principal bundle, double vector bundle, double Lie group}

\begin{abstract}
 	We define double principal  bundles (DPBs), for which the frame bundle of a double vector bundle, double Lie groups and double homogeneous spaces are basic examples. It is shown that a double vector bundle can be realized as the associated bundle of its frame bundle. Also dual structures, gauge transformations and connections in DPBs are investigated.
\end{abstract}

\maketitle

\hfill
To the memory of Kirill C. H. Mackenzie
\vspace{2.5em}

\section{Introduction}

Double structures arise naturally in Poisson geometry, where the symplectic double groupoid was constructed by Lu and Weinstein \cite{lu} as the double of a Poisson Lie group. As basic objects in second-order geometry, general  structures of double Lie algebroids and double Lie groupoids were studied by Mackenzie in \cite{Mackenzie1992Double, Mackenzie, Mackenzie2000Double}, where double Lie algebroids can also be used to describe  the double of a Lie bialgebroid. See \cite{BCH,gracia2010lie,mackenzie2005duality, voronov2012structure} for more details on double Lie theory.
   
As a fundamental double structure, which genuinely appears in the theory of classical mechanics \cite{MT} and also in other branches of mathematics, the double vector bundle (DVB) has been extensively studied, {\em i.e.}, \cite{CLS2014, grabowski2010double, notes, konieczna1999double, Mackenzie1999, roytenberg2002structure}. The duality theory of DVBs was analyzed in \cite{konieczna1999double} and \cite{Mackenzie1999}.  In \cite{roytenberg2002structure}, a Courant algebroid was constructed by the symplectic realization from a cotangent double vector bundle.

Principal bundles play a fundamental role in differential geometry. To any given principal bundle one can  associate a vector bundle. Meanwhile, a class of  important  Lie algebroids and Lie groupoids appear due to the symmetry of a principal bundle. It is therefore natural to look for a similar object as a principal bundle in second-order geometry, from which one can construct double vector bundles, double Lie algebroids and double Lie groupoids.

Given the motivation above, we present in this paper the notion of a double principal bundle (DPB), a new kind of double structures, defined by a commutative square of principal bundles with certain compatibility conditions. A class of DPBs is equipped with structure groups, which we call double Lie groups. The frame bundle of a DVB is such an example. Unlike usual principal bundles, a DPB does not necessarily admit a structure group. A counterexample is given in Section \ref{DLGDLA}. We show that a DVB can always be realized as an associated bundle of a DPB. With this construction, it is not hard to get the dual DVB by taking dual representation of the frame bundle of a DVB. It is hidden in the literature, see \cite{voronov2012structure} for example, that the frame bundles of DVBs should carry a structure of DPB. See Remark \ref{Rm:4.12} for details. Double Lie groups in our construction are different from the known double Lie groupoids over a point in \cite{Mackenzie1992Double, Mackenzie2000Double}. We will examine the relations of these two kinds of double structures by examples.

This paper is organized as follows: In section \ref{DPFB}, we give the definition of double principal bundles and basic examples such as double Lie groups and double homogeneous spaces. In section \ref{DLGDLA}, we expound  the structures  of double Lie groups. In particular, the automorphism group of a double vector space is given, which will be used to construct  the frame bundles of double vector bundles. The main result of this paper is formulated in Section \ref{ADVB}. Namely, a double vector bundle can be realized as an associated bundle of a double principal  bundle, with the fiber being a double vector space.  Also the dual structures  are investigated in this section. In Section \ref{App}, we study gauge transformations and connections in  double principal bundles.


\section{Double Principal Bundles}\label{DPFB}

In this section, we shall introduce the notion of double principal bundles (DPBs) and give some basic properties. Also the double homogeneous spaces will be considered, as examples of DPBs.

\begin{Def}
	A \emph{double principal bundle} $\mathbb{P}=(P;P_1,P_2;M)$ consists of  four principal bundles and two exact sequences of their structure  groups:
	\begin{equation}\label{def of DPB}
		\vcenter{\xymatrix{
					P \ar[d]^{\pi_1} \ar[r]^{\pi_2} &P_2 \ar[d]^{\lambda_2}\\
					P_1 \ar[r]^{\lambda_1} &M
				}}, \quad
		\vcenter{\xymatrix @C=2.0pc @R=.5pc{
					& & K_1 \ar[r]^{\phi_2} & G_2 \ar[rd] &\\
					1\ar[r] & G_0 \ar[ru]^{i_1} \ar[rd]^{i_2} & &  &1\\
					& & K_2\ar[r]^{\phi_1} & G_1 \ar[ru]  &
				}}.
	\end{equation}
	where $P_1(G_1,M)$ and $P_2(G_2,M)$ are principal bundles over $M$.  The total space $P$ has two principal bundle structures, denoted  by $P_{I} =P(K_1,P_1)$, $P_{II}=P(K_2,P_2)$, over $P_1$, $P_2$ respectively, such that 
	\begin{itemize}
		\item $(\pi_1,\phi_1): P(K_2,P_2)\to P_1(G_1,M)$ and $(\pi_2,\phi_2): P(K_1,P_1)\to  P_2(G_2,M)$ are homomorphisms of principal bundles;
		\item $p\cdot i_1(g_0)=p\cdot i_2 (g_0), \quad \forall p \in P, \, \, \forall g_0 \in G_0$.  
	\end{itemize}
\end{Def}

It is immediate that the fiber product $(P_1\times_M P_2;P_1,P_2;M)$ forms a DPB for any two principal bundles $P_1$ and $P_2$ over $M$. The following proposition shows that a DPB is always the extension of this fiber product. Let  $\widehat{K}_i$ be the subgroup of $\Diff(P)$ induced by the action of  $K_i (i= 1,2)$ on $P$ and let  
\[
	\widehat{G}_0 : = \widehat{i_1(G_0)}  = \widehat{i_2(G_0)} \subset \Diff(P).
\] 
For convenience, we use $pk_i$ and $pg_0$ to denote the actions of $k_i\in K_i$ and $g_0\in G_0$ on $p\in P$.
\begin{Pro}
	Given a DPB $\mathbb{P}=(P;P_1,P_2;M)$, we have that $ \widehat{G}_0=\widehat{K}_1\cap \widehat{K}_2$ and the total space $P$  is a principal $G_0$-bundle over $P_1\times_M P_2$ with the projection $(\pi_1,\pi_2)$:
	\begin{equation}\label{fig1}
		\vcenter{\xymatrix @C=2.0pc @R=.7pc{
		& & P_2 \ar[dr] &\\
		G_0 \to P\, \ar[r] \ar@/^3ex/[urr] \ar@/_3ex/[drr] & P_1\times_M P_2 \ar[dr] \ar[ur] & &M\\
		& & P_1 \ar[ur] &
		}}.
	\end{equation}
	The  principal bundle $P_C:=P(G_0,P_1\times_M P_2)$ is called the \emph{core} of $\mathbb{P}$.   
\end{Pro}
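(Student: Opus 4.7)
The plan is to establish the group-theoretic identity $\widehat{G}_0 = \widehat{K}_1 \cap \widehat{K}_2$ first, and then derive the principal $G_0$-bundle structure of $P \to P_1 \times_M P_2$ from it. I denote by $R_{k_i}$ the diffeomorphism of $P$ given by right multiplication by $k_i \in K_i$.

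The inclusion $\widehat{G}_0 \subseteq \widehat{K}_1 \cap \widehat{K}_2$ is immediate from the second bullet in the DPB definition. For the reverse, suppose $R_{k_1} = R_{k_2}$ with $k_i \in K_i$, meaning $p \cdot k_1 = p \cdot k_2$ for all $p$. Applying $\pi_1$ and using that $(\pi_1,\phi_1)$ is a morphism of principal bundles, together with the fact that $K_1$ acts trivially on $P_1$, gives $\pi_1(p) = \pi_1(p) \cdot \phi_1(k_2)$, so $\phi_1(k_2) = 1$ by freeness. Exactness of the lower row yields $k_2 = i_2(g_0)$ for some $g_0 \in G_0$; the second bullet then rewrites this as $R_{k_1} = R_{i_1(g_0)}$, and freeness of the $K_1$-action forces $k_1 = i_1(g_0)$, so the diffeomorphism lies in $\widehat{G}_0$.

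Now set $\Phi = (\pi_1,\pi_2) : P \to P_1 \times_M P_2$; this is well-defined since $\lambda_1 \circ \pi_1 = \lambda_2 \circ \pi_2$. Exactness gives $\phi_2 \circ i_1 = 1$ and $\phi_1 \circ i_2 = 1$, which, combined with the equivariance of $\pi_i$, shows that the diagonal $G_0$-action preserves the fibres of $\Phi$, and freeness is inherited from the $K_1$-action. For transitivity on fibres, if $\Phi(p) = \Phi(p')$, then $p' = p \cdot k_1 = p \cdot k_2$ for some $k_i \in K_i$; the argument of the previous paragraph, applied at this specific $p$, produces $g_0 \in G_0$ with $p' = p \cdot i_1(g_0)$. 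Surjectivity of $\Phi$ uses that $\phi_2$ is surjective: given $(y_1,y_2)$ in the fibre product, pick $p \in \pi_1^{-1}(y_1)$, choose $g_2 \in G_2$ with $\pi_2(p)\cdot g_2 = y_2$, and lift $g_2$ to $k_1 \in K_1$ via $\phi_2$; then $\Phi(p \cdot k_1) = (y_1,y_2)$.

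The remaining step is local triviality. Over a small open $U \subset M$ that trivialises both $P_i(G_i,M)$, combining the local trivialisation of $P \to P_1$ with a local section of $\phi_2 : K_1 \to G_2$ produces a diffeomorphism $U \times G_1 \times G_2 \times G_0 \xrightarrow{\sim} P|_U$ intertwining $\Phi$ with the projection onto the first three factors and the $G_0$-action with translation on the fourth. The main subtlety here is coherence between the two principal structures: the $K_1$- and $K_2$-structures of $P$ must agree on their common $G_0$-subaction in order for the chart to be principal, and the second bullet in the DPB definition provides exactly this compatibility.
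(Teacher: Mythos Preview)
Your argument for $\widehat{G}_0=\widehat{K}_1\cap\widehat{K}_2$ and the fibre analysis of $\Phi=(\pi_1,\pi_2)$ (freeness, transitivity, surjectivity) are correct and essentially coincide with the paper's proof.

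The local triviality step is where you and the paper diverge. The paper builds a local section of $\Phi$ directly: it takes local sections $S_i:U_i\to P$ of the two bundle structures $P\to P_i$, translates them so that $S_1(U_1)k_1\cap S_2(U_2)k_2$ is nonempty, and shows that $(\pi_1,\pi_2)$ restricted to this intersection is a diffeomorphism onto its image in $P_1\times_M P_2$. You instead exploit the exact sequence $1\to G_0\to K_1\xrightarrow{\phi_2}G_2\to 1$ via a local section of $\phi_2$, combined with a local trivialisation of $P\to P_1$. This is a perfectly good route, and arguably more structural, since it makes visible that $P\to P/G_0$ is the quotient of a principal $K_1$-bundle by the closed normal subgroup $G_0=\ker\phi_2$. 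One caution: as written, your claim of a diffeomorphism $U\times G_1\times G_2\times G_0\xrightarrow{\sim}P|_U$ is slightly overstated. A local section of $\phi_2$ exists only over a neighbourhood $W\subset G_2$, and $P\to P_1$ need not be trivial over all of $\lambda_1^{-1}(U)\cong U\times G_1$; so what you actually obtain is a chart over an open of the form $V_1\times W$ with $V_1\subset P_1$ open. This is of course enough for local triviality, so the argument is easily repaired by shrinking, but the sentence should be adjusted.
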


\begin{proof}
	By definition, we have $\widehat{G}_0\subset \widehat{K}_1\cap\widehat{K}_2$. For the other direction, suppose $\widehat{k}_1=\widehat{k}_2\in \widehat{K}_1\cap\widehat{K}_2$, then following from the free action of $G_1$ on $P_1$
	and
	\[
		\pi_1(p)=\pi_1(pk_1)=\pi_1(pk_2)=\pi_1(p)\phi_1(k_2),\qquad \forall p\in P,
	\]
	we get $\phi_1(k_2)=e$, i.e., $k_2\in \ker \phi_1$ and thus $\widehat{k}_2\in \widehat{G}_0$. This proves $ \widehat{G}_0=\widehat{K}_1\cap \widehat{K}_2$.
	
    Then we prove $P_C$ is a principal bundle. First note that $(\pi_1,\pi_2): P\to P_1\times_M P_2$ is a surjective submersion with the kernel being $G_0$-orbits. In fact, for any $(p_1,p_2)\in P_1\times_M P_2$, we have $(\pi_1,\pi_2)^{-1}(p_1,p_2)=F_{p_1}\cap F_{p_2}$, the intersection of the two fibers of $P(K_1,P_1)$ and $P(K_2,P_2)$, which is nonempty because $(\pi_2,\phi_2)$ is an epimorphism of principal bundles.  Meanwhile, it is clear that $F_{p_1}\cap F_{p_2}$ is isomorphic to $G_0$. This implies that $P/G_0$ is diffeomorphic to $P_1\times _M P_2$. Then to prove $P_C$ is a principal bundle, it suffices to  find a local section: Take open sets $U_i\subset P_i, i=1,2$ and let
    \[
    	U_1\times_M U_2:=U_1\times U_2\cap P_1\times_M P_2\subset P_1\times P_2,
    \]
    which is a general open set of the submanifold $P_1\times_M P_2$. Assume $U_i$ admits a local section $S_i:U_i\to P$. Since $S_1(U_1)K_1=\pi_1^{-1}(U_1)$ and $\pi_1^{-1}(U_1)\cap \pi_2^{-1}(U_2)$ is nonempty, one can choose $k_i\in K_i$ such that $U:=S_1(U_1)k_1\cap S_2(U_2)k_2$ is nonempty. By the assumption, for any given $(p_1,p_2)\in U_1\times_M U_2$, there exits $p\in U$ satisfying $(\pi_1(p),\pi_2(p))=(p_1,p_2)$ and the element $p$ is unique since the local section $S_i$ is injective. Hence the following map has to be a diffeomorphism from $U$ to its image
	\[
		(\pi_1,\pi_2):U\to P_1\times_M P_2: p\mapsto (\pi_1(p),\pi_2(p)),
	\]
	and the inverse is just the local section we want.
\end{proof}

\begin{Rm}
	As we saw in the commutative diagram \eqref{fig1}, $P$ can be viewed as a $G_0$ extension of the principal bundle $P_1\times_M P_2\to P_i$. When $P_1=P_2= Q$ and $G_0 = S^1$, the diagram
	\[
		\vcenter{\xymatrix @C=2.0pc @R=.7pc{
		& & Q \ar[dr] &\\
		S^1 \to P\, \ar[r] & Q\times_M Q \ar[dr] \ar[ur] & &M\\
		& & Q \ar[ur] &
		}}
	\]
 is similar to the one appeared in the definition of $S^1$-gerbes \cite{murray1996bundle}. More  general non-abelian gerbes were introduced in \cite{laurent2009non} as the groupoid extension. It is interesting to bring together these two kinds of extensions in some way.
 \end{Rm}

Now we introduce a special kind of DPBs with structure groups:
\begin{Def}
	A quadruple of Lie groups $\mathbb{G}=(G;G_1,G_2)_{G_0}$ is said to be a \emph{double Lie group} (DLG) with \emph{core} $G_0$ if it fits into the following exact sequence:
	\begin{equation}\label{DLG1}
		1\to G_0\xrightarrow{i} G \xrightarrow{\varphi} G_1\times G_2 \to 1.
	\end{equation}
\end{Def}

\begin{Rm}
	In \cite{lu},  a double Lie group denotes a triple $(G,G_+,G_-)$ of Lie groups where $G_+$ and $G_-$  are both Lie subgroups of $G$ such that the map $G_+\times G_-\to G$  given by $(g_+,g_-)\mapsto g_+g_-$ is a diffeomorphism. Here we adopt the same terminology  for a different notion.
\end{Rm}

Set $K_1=\varphi^{-1}(\{e\}\times G_2)$ and $K_2=\varphi^{-1}(G_1\times\{e\})$. The restrictions of $\varphi$ give the following two epimorphisms: 
\[
	\phi_2:=\varphi|_{K_1}: K_1\to G_2;\quad \phi_1:=\varphi|_{K_2}: K_2\to G_1.
\]
\begin{Lem}\label{Lem:2.6}
	The given $K_1$ and $K_2$ are closed normal subgroups  of $G$ such that $i(G_0)= K_1\cap K_2$ and the exact sequences follow:
	\[
		\vcenter{\xymatrix @C=2.3pc @R=1.5pc{
		& & K_1 \ar@{^{(}->}[d] \ar[r]^{\phi_2} & G_2 \ar@{^{(}->}[d] \ar[rd] &\\
		1 \ar[r] & G_0 \ar[ur] \ar[dr] \ar[r] &G \ar[r] &G_1\times G_2 \ar[r]  &1\\
		& & K_2 \ar@{^{(}->}[u] \ar[r]^{\phi_1}& G_1  \ar@{^{(}->}[u] \ar[ru]&
		}}.
	\]
	Moreover, $G=K_1K_2$ if $G$ is connected. 	
\end{Lem}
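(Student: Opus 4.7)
The plan is to read everything off from the surjective homomorphism $\varphi$ and the defining exactness of \eqref{DLG1}. First I would observe that $\{e\}\times G_2$ and $G_1\times\{e\}$ are closed normal subgroups of $G_1\times G_2$; since $\varphi$ is a continuous group homomorphism, their preimages $K_1$ and $K_2$ are automatically closed normal subgroups of $G$. Commutativity of the diagram is then built in: by construction $\phi_2$ is nothing but $\varphi|_{K_1}$ composed with the projection onto $G_2$, and likewise for $\phi_1$.

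Next I would identify the intersection using exactness of \eqref{DLG1}:
$K_1\cap K_2 = \varphi^{-1}\bigl((\{e\}\times G_2)\cap(G_1\times\{e\})\bigr) = \ker\varphi = i(G_0)$.
The same computation shows $\ker\phi_2 = \ker\phi_1 = i(G_0)$, since each $\phi_i$ is $\varphi|_{K_{3-i}}$ landing in the appropriate factor. Surjectivity of each $\phi_i$ then follows from surjectivity of $\varphi$: for $g_1\in G_1$ pick $g\in G$ with $\varphi(g)=(g_1,e)$, so $g\in K_2$ and $\phi_1(g)=g_1$, and symmetrically for $\phi_2$. Together these yield the two short exact sequences $1\to G_0\to K_i\to G_{3-i}\to 1$ displayed in the lemma.

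Finally, for $G=K_1K_2$ under the connectedness hypothesis, I would pass to Lie algebras. Writing $\mathfrak{k}_i=\Lie(K_i)$ and $\mathfrak{g}_0=\Lie(G_0)$, the derivative $\varphi_*\colon\mathfrak{g}\to\mathfrak{g}_1\oplus\mathfrak{g}_2$ sends $\mathfrak{k}_1$ onto $0\oplus\mathfrak{g}_2$, sends $\mathfrak{k}_2$ onto $\mathfrak{g}_1\oplus 0$, and has kernel $\mathfrak{g}_0\subset\mathfrak{k}_1\cap\mathfrak{k}_2$; these three facts force $\mathfrak{g}=\mathfrak{k}_1+\mathfrak{k}_2$. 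Since $K_1$ and $K_2$ are both normal, $K_1K_2$ is a genuine subgroup of $G$, and the multiplication map $K_1\times K_2\to G$ has surjective differential at $(e,e)$. By the implicit function theorem $K_1K_2$ therefore contains a neighbourhood of the identity, so it is an open subgroup of $G$; connectedness then forces $K_1K_2=G$. The only step I expect to require any real thought is this last one, where one must upgrade the Lie algebra equality $\mathfrak{g}=\mathfrak{k}_1+\mathfrak{k}_2$ to a group-level equality, and this is precisely where the connectedness hypothesis enters.
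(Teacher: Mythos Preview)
Your argument is correct and follows essentially the same route as the paper: the identification $K_1\cap K_2=\ker\varphi=i(G_0)$ and the exactness of the side sequences are handled identically, and for $G=K_1K_2$ both you and the paper show that $K_1K_2$ is an open subgroup and then invoke connectedness. The only cosmetic difference is in how openness is obtained: the paper first argues $K_1K_2$ is closed and then uses a dimension count via the surjection $K_1K_2\to G_1$ with kernel $K_1$, whereas you go directly through $\mathfrak g=\mathfrak k_1+\mathfrak k_2$ and the surjectivity of the differential of multiplication---your version is a bit more streamlined but the content is the same.
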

\begin{proof}
  By the exactness of the sequence \eqref{DLG1}, we get
  \[
  	i(G_0)=\ker \varphi=\varphi^{-1}(\{e\}\times G_2)\cap\varphi^{-1}(G_1\times\{e\})=K_1\cap K_2.
  \]
  Then the exact sequences are easy to get. Next we show  $G=K_1K_2$ if $G$ is connected: First $K_1K_2$ is a subgroup of $G$ since $K_1$ is a normal subgroup of $G$. To see $K_1K_2$ is a regular Lie subgroup, we need to prove that it is a closed submanifold. Take $g\notin K_1K_2$ and $U\subset K_2$ a small neighborhood of $e$. Then $gU$ is neighborhood of $g$ in $G$. It follows immediately that $gU\cap K_1K_2=\emptyset$, which means $K_1K_2$ is closed. Note that the composition $\sigma:K_1K_2\hookrightarrow G\to G_1$ is an epimorphism and $K_1=\ker\sigma$. Then we get $\dim K_1K_2= \dim K_1+\dim G_1=\dim G$.

  It suffices to show that the connected component of $K_1K_2$ is $G$. Choose a neighborhood $U\subset K_1K_2$ of $e\in K_1K_2$ under the sub-topology. From the dimension argument, $U$ must be an open subset in $G$ and then a neighborhood of $e\in G$. Since $G$ is connected, we have
  \[
  	G=\cup_{n=1}^{\infty}U^n\subset K_1K_2\subset G,
  \]
  where $ U^n$ consists of all $n$-fold products of elements of $U$. So we conclude that $G=K_1K_2$.
\end{proof}

We always assume that the group $G$ is either connected or $G=K_1K_2$. More details about double Lie groups will be discussed in Section \ref{DLGDLA}. Let $\mathbb{G}=(G;G_1,G_2)_{G_0}$ be a DLG.

\begin{Def}
	A DPB $\mathbb{P}=(P;P_1,P_2;M)$ is called a \emph{$\mathbb{G}$-DPB} if $P$ is a $G$-space such that both of the actions of $K_1$ and $K_2$ on $P$ are induced by the action: $G\to \widehat{G} \subset \Diff(P)$. In this case, the DLG $\mathbb{G}$ is called the \emph{structure group} of $\mathbb{P}$.
\end{Def}

It is known that a Lie group $G$ is $K$-principal bundle over the homogeneous space $G/K$ for any closed subgroup $K$, since there always exists a local section $G/K\to G$. Hence, what follows is clear:

\begin{Lem}
	A double Lie group $\mathbb{G}=(G;G_1,G_2)$ is a $\mathbb{G}$-DPB $(G;G_1,G_2;e)$:
	\begin{equation*}
		\vcenter{\xymatrix{
			G \ar[d]^{\varphi_1} \ar[r]^{\varphi_2} &G_2 \ar[d]\\
			G_1 \ar[r] &e
		}},
	\end{equation*}
	where $\varphi_i:G\to G_1\times G_2\to G_i$ is the composition of $\varphi$ and the projection.
\end{Lem}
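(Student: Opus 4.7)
The plan is to verify each of the four axioms of a DPB (two principal bundle structures on the sides, two on the total space, the two morphism conditions, and the compatibility $p\cdot i_1(g_0)=p\cdot i_2(g_0)$) and then to check the $\mathbb{G}$-DPB condition, using the structural information provided by the preceding lemma.

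First, I would observe that any Lie group is a principal bundle over a point, so $G_1$ and $G_2$ trivially play the role of $P_1(G_1,\{e\})$ and $P_2(G_2,\{e\})$. For the two principal bundle structures on $G$ itself, the previous lemma says that $K_1, K_2$ are closed (in fact normal) Lie subgroups of $G$, so the coset projections $G\to G/K_i$ are principal $K_i$-bundles for the right multiplication action. Since $\varphi\colon G\to G_1\times G_2$ has image containing $G_i\times\{e\}$ and $\{e\}\times G_2$ respectively (again by the lemma, which gives $G=K_1K_2$ or connectedness), the composition $\varphi_i\colon G\to G_i$ is surjective with kernel exactly $K_i$, so $G/K_i\cong G_i$ canonically and $G=G(K_i,G_i)$ is a principal bundle.

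Next I would check the two principal bundle morphism conditions. For $(\varphi_1,\phi_1)\colon G(K_2,G_2)\to G_1(G_1,\{e\})$, one needs $\varphi_1(g\cdot k_2)=\varphi_1(g)\cdot\phi_1(k_2)$ for all $g\in G$, $k_2\in K_2$; since $\varphi_1$ is a group homomorphism and, by construction, $\phi_1$ is the restriction of $\varphi_1$ to $K_2$, this equality is automatic. The symmetric statement for $(\varphi_2,\phi_2)$ is identical.

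For the compatibility condition, both $i_1\colon G_0\to K_1$ and $i_2\colon G_0\to K_2$ are the maps into $K_1\cap K_2=i(G_0)\subset G$ induced by the single inclusion $i\colon G_0\hookrightarrow G$, so $i_1(g_0)=i_2(g_0)=i(g_0)$ as elements of $G$, and the induced right-multiplication actions on $G$ coincide. Finally, the $\mathbb{G}$-DPB property is immediate: $G$ acts on itself by right multiplication, and this single action restricts to the $K_1$- and $K_2$-actions that define the two principal bundle structures. I do not expect any genuine obstacle — every ingredient is supplied by the preceding lemma, and the only point that needs a moment of care is identifying $G/K_i$ with $G_i$ via $\varphi_i$, which is just the first isomorphism theorem for Lie groups applied to a surjective homomorphism with closed kernel.
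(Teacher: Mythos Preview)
Your verification is correct and is exactly the routine check one would write out; the paper states this lemma without proof, treating it as immediate from the preceding structural lemma. One small simplification: surjectivity of $\varphi_i$ follows directly from surjectivity of $\varphi$ in the exact sequence \eqref{DLG1}, so you do not need to invoke $G=K_1K_2$ for that step.
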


One consequence of the free action of $G$ on $P$ is: 
\begin{Pro}\label{PoverM}
	The total space  $P$  of a $\mathbb{G}$-DPB  $\mathbb{P}=(P;P_1,P_2;M)$ is a principal $G$-bundle over $M$. 
\end{Pro}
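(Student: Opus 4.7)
The plan is to realize $\pi := \lambda_1\circ\pi_1 = \lambda_2\circ\pi_2 : P \to M$ as a principal $G$-bundle, which reduces to three items: (i) $\pi$ is a surjective submersion, (ii) the $G$-action on $P$ is free and its orbits coincide with the fibers of $\pi$, and (iii) $\pi$ admits local sections.

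Item (i) is immediate from the surjective submersion property of the four constituent principal bundles. For (ii), the decomposition $G = K_1 K_2$ from the preceding lemma lets me write any $g \in G$ as $g = k_1 k_2$ with $k_i \in K_i$. Applying the bundle homomorphism $(\pi_1,\phi_1)$ then gives
\[
\pi_1(p\cdot g) \;=\; \pi_1\bigl((p\cdot k_1)\cdot k_2\bigr) \;=\; \pi_1(p)\cdot\phi_1(k_2).
\]
If $p\cdot g = p$, the free $G_1$-action on $P_1$ forces $\phi_1(k_2)=e$, so $k_2 \in \ker\phi_1 = i_2(G_0) \subset K_1$, whence $g\in K_1$; the free $K_1$-action on $P$ then finishes freeness. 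The orbit identification $P/G = M$ follows by iterating $P/K_1 = P_1$ and $P_1/G_1 = M$, using $G/K_1 \cong G_1$ via $\varphi_1$.

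For (iii), given $x\in M$ I would pick a neighborhood $U$ of $x$ on which both $\lambda_i:P_i\to M$ trivialize with local sections $s_i$. Then $(s_1,s_2): U\to P_1\times_M P_2$ is a section of the fiber product, and the previous proposition supplies a principal $G_0$-bundle structure on $P\to P_1\times_M P_2$, from which, shrinking $U$ if necessary, I can lift $(s_1,s_2)$ to a local section $\tau : U \to P$ of $\pi$. The candidate trivialization is then $U\times G \to \pi^{-1}(U)$, $(x,g)\mapsto \tau(x)\cdot g$, which is manifestly $G$-equivariant and, by freeness, bijective onto $\pi^{-1}(U)$.

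The main obstacle is upgrading this bijection to a diffeomorphism. I expect to handle it by exploiting the local smooth splitting of the sequence $1 \to G_0 \to G \to G_1\times G_2 \to 1$, which provides a local identification $G \cong G_0 \times (G_1\times G_2)$ near the identity, and then assembling the trivialization from the local trivializations of the core bundle $P_C \to P_1\times_M P_2$ (supplying the $G_0$-factor) and of the two ambient principal bundles $P_i \to M$ (together supplying the $G_1\times G_2$-factor).
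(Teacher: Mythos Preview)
Your argument is correct, but it takes a longer route than the paper's for item (iii). The paper bypasses the fiber product and the core bundle entirely: it simply picks a local section $s_1: U \to P_1$ of $\lambda_1$, then a local section $S_1: U_1 \to P$ of the principal $K_1$-bundle $\pi_1: P \to P_1$ over a neighborhood $U_1 \subset s_1(U)$, and observes that $S_1\circ s_1$ is already a local section of $\pi$. Only one side of the square is used.

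Your approach---lifting $(s_1,s_2)$ through the core bundle $P_C \to P_1\times_M P_2$---also works, and has the mild advantage of making the role of $G_0$ explicit; but it invokes the previous proposition and an extra layer of local sections that are not needed here. As for the ``main obstacle'' you flag: once you have a smooth local section $\tau$ and freeness, the map $(x,g)\mapsto \tau(x)\cdot g$ is automatically a diffeomorphism. It is smooth by construction, bijective by freeness and the orbit-fiber identification, and a local diffeomorphism because $\dim P = \dim M + \dim G$ (from the two short exact sequences) together with $d\pi\circ d\tau = \id$ forces the differential to be injective. So no local splitting of $1\to G_0\to G\to G_1\times G_2\to 1$ is required; the standard principal-bundle argument suffices. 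Your verification of freeness in (ii), which the paper merely asserts in the sentence preceding the proposition, is a genuine addition.
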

\begin{proof}
	The proof consists in the construction of a local trivialization, which is equal to find a local section. Take $m\in M$ and its neighborhood $U$ which admits a local section $s_1: U\to P_1$. Then take a neighborhood $U_1\subset s_1(U)$ of $s_1(m)$ such that there exists a local section $S_1: U_1\to P$.	Hence $S_1\circ s_1: s_1^{-1}(U_1)\to P$ is the local section we want.
\end{proof}

\begin{Ex}
	Let $\pi_i:P_i\to M, i=0,1,2$ be three principal bundles with structure groups $G_i$ respectively. Take $P=P_1\times_M P_2\times_M P_0$ and $G=G_1\times G_2\times G_0$. Then it is easily seen that $(P;P_1,P_2;M)$ is a $\mathbb{G}$-DPB with $\mathbb{G}=(G;G_1,G_2)_{G_0}$, which is said to be \emph{trivial}. On the other hand, let $(P;P_1,P_2;M)$ be a $\mathbb{G}$-DPB, then $P$ can be decomposed as the fiber product of three principal bundles over $M$ if $P$ is a $\mathbb{G}$-DPB with $G=G_1\times G_2\times G_0$.
\end{Ex}

In general, it is not true that there always exits a DLG $\mathbb{G}$ for a DPB $\mathbb{P}$ such that $\mathbb{P}$ is a $\mathbb{G}$-DPB, {\em i.e.}, not every DPB admits a structure DLG. Especially, not every DPB over a point is a DLG. We now construct such an example.

Take three connected Lie groups $G_i$ for $i=0,1,2$. Assume that there are group homomorphisms $\psi_i:G_i\to \Aut(G_0)$ for $i=1,2$. Thus we get semi-direct products
\[
	K_1:=G_2\ltimes_{\psi_2} G_0; \quad K_2:=G_1\ltimes_{\psi_1} G_0.
\]
Let $P:=G_1\times G_2\times G_0$ be the Cartesian product of manifolds. Note that here we view $P$ as a manifold only. Clearly, $P=G_1\times K_1=G_2\times K_2$ as manifold. Thus $P_{I}:=G_1\times K_1$ and $P_{II}:=G_2\times K_2$ are trivial bundles over $G_1$ and $G_2$ respectively. Then we have

\begin{Pro}\label{Pro:2.12}
	Following the notation above, the quadruple $\mathbb{P}:=(P;G_1,G_2;\pt)$ is a DPB. Moreover, the DPB $\mathbb{P}$ has a structure of a DLG if and only if the actions of $G_1$ and $G_2$ on $G_0$ commute.
\end{Pro}
\begin{proof}
	The first statement follows from the definition.

	For the second part, on the one hand, if the actions of $G_i$'s commute, we can simply define $P$ as the group $(G_1\times G_2)\ltimes_{\psi_1\times \psi_2} G_0$. Note that there might be various group structures on $P$ such that $\mathbb{P}$ is a $\mathbb{P}$-DPB. 

	On the other hand, suppose that $\mathbb{P}$ is a DLG. Since $G_1$ is a subgroup of $K_2$, hence $G_1$ is a subgroup of $P$. Since $K_1$ is a normal subgroup of $P$ and $G_1\cap K_1=\{e\}$, thus one can show $P=G_1K_1$ in the same fashion as in Lemma \ref{Lem:2.6}. Hence $P$ must be a semi-direct product $G_1\ltimes_{\psi} K_1$ for some group homomorphism $\psi: G_1\to \Aut(K_1)$. Take $g_i\in G_i$ for $i=0,1,2$, we have
	\[
		(g_1,g_2,e)\cdot (e,e,g_0)=\left( g_1, g_2\pr_2(\psi(g_1)(e,g_0)),\psi_2(g_2)\left(\pr_0(\psi(g_1)(e,g_0))\right) \right),
	\]
	by the definition of the semi-direct product $P=G_1\ltimes_{\psi} (G_2\ltimes_{\psi_2} G_0)$, where $\pr_i:K_1\to G_i$ for $i=0,2$. Since $P/G_0\cong G_1\times G_2$ and $K_2$ is a normal subgroup of $P$, we have
	\[
		\pr_2(\psi(g_1)(e,g_0))=e, \quad \pr_0(\psi(g_1)(e,g_0))=\psi_1(g_1)(g_0).
	\]
	Thus we get
	\begin{equation}\label{eq:prop2.11(1)}
		(g_1,g_2,e)\cdot (e,e,g_0)=\big( g_1, g_2,\psi_2(g_2)\left(\psi_1(g_1)(g_0)\right) \big).
	\end{equation}

	Note that $G_2$ plays a similar role as $G_1$. By viewing $P=G_2\ltimes_{\psi'} K_2$ for some $\psi': G_2\to \Aut(K_2)$, similar calculation gives
	\begin{equation}\label{eq:prop2.11(2)}
		(g_1,g_2,e)\cdot (e,e,g_0)=\big( g_1, g_2,\psi_1(g_1)\left(\psi_2(g_2)(g_0)\right) \big).
	\end{equation}
	By the fact that $P$ is a group, Eq \eqref{eq:prop2.11(1)} must be equal to Eq \eqref{eq:prop2.11(2)}, which is equivalent to
	\[
		\psi_2(g_2)\left(\psi_1(g_1)(g_0)\right) =\psi_1(g_1)\left(\psi_2(g_2)(g_0)\right),
	\]
	{\em i.e.}, the actions of $G_i$'s for $i=1,2$ on $G_0$ commute.
\end{proof}

\begin{Rm}
	 Note that, as a square of Lie group bundles, the DPB $\mathbb{P}$ in Proposition \ref{Pro:2.12} carries a structure of a double Lie groupoid \cite{Mackenzie} over a point, whose source and target maps are natural projections. Hence this example clarifies three facts:
	 \begin{itemize}
	 \item [\rm (1)] There are DPBs that are not $\mathbb{G}$-DPBs;
	 \item [\rm (2)] A DPB over a point is not necessarily a DLG;
	 \item [\rm (3)] A double Lie groupoid over a point is not necessarily a DLG.
	 \end{itemize}
	 Besides, it follows by definition that a DLG is not necessarily a double Lie groupoid over a point.
\end{Rm}

As we saw above, homogeneous spaces of a Lie group provide basic examples for principal bundles. We now construct the \emph{homogeneous space} for a double Lie group, which  gives a class of  DPBs.
\begin{Pro}
Let $\mathbb{G}=(G;G_1,G_2)_{G_0}$ be  a double Lie group,  $ H \subset G$ a closed  subgroup, and 
$$ H_1:=H/H\cap K_1, \quad H_2:=H/H\cap K_2, \quad H_0:= H\cap G_0.$$
Then $\mathbb{H}:=(H;H_1,H_2)_{ H_0}$ is a double Lie group and $\mathbb{G}/\mathbb{H}$, defined below, is an  $\mathbb{H}$-DPB:
\begin{equation*}
		\mathbb{H}=\vcenter{\xymatrix{	
			H \ar[d]^{\varphi_1} \ar[r]^{\varphi_2} &H_2 \ar[d]\\
			H_1 \ar[r]&e
		}},\quad
		\mathbb{G}/\mathbb{H} :=
	\vcenter{\xymatrix {
			G \ar[d]\ar[r] &G/H\cap K_2 \ar[d]\\
				G/H\cap K_1 \ar[r] &G/H
		}},
	\end{equation*}
where $\varphi_1$ and $\varphi_2$ are the natural projections.
\end{Pro}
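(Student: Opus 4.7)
The plan is to handle the two assertions in turn: first that $\mathbb{H}$ is a double Lie group via the exact sequence $1\to H_0\to H\to H_1\times H_2\to 1$, and second that $\mathbb{G}/\mathbb{H}$ satisfies all the principal bundle, homomorphism, and compatibility conditions of an $\mathbb{H}$-DPB.

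For the first claim I would introduce the natural map $\psi:H\to H_1\times H_2$, $h\mapsto([h]_1,[h]_2)$. This is a group homomorphism because $K_i$ is normal in $G$ by the previous lemma, so $H\cap K_i$ is normal in $H$. Its kernel is $(H\cap K_1)\cap(H\cap K_2)=H\cap(K_1\cap K_2)=H\cap i(G_0)=H_0$. The surjectivity of $\psi$ is the decisive step and is equivalent to the factorization $H=(H\cap K_1)(H\cap K_2)$; I would obtain it by repeating the dimension-plus-connectedness argument used to establish $G=K_1K_2$ in the preceding lemma, applied now to the closed subgroup $H$ under the standing connectedness hypothesis inherited from the remark following that lemma. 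This surjectivity is the main obstacle of the proof.

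For the DPB $\mathbb{G}/\mathbb{H}$, I would first check the four principal bundle structures. Since $H$, and hence $H\cap K_1$ and $H\cap K_2$, are closed in $G$, the classical quotient theorem makes $G\to G/(H\cap K_i)$ a principal $(H\cap K_i)$-bundle. The residual right action of $H_i=H/(H\cap K_i)$ on $G/(H\cap K_i)$ is well defined (by normality of $H\cap K_i$ in $H$), free, and has orbit space $G/H$, giving $G/(H\cap K_i)\to G/H$ the structure of a principal $H_i$-bundle. The two exact sequences of structure groups required by the DPB definition then read
\[
1\to H_0\to H\cap K_1\xrightarrow{\phi_2}H_2\to 1,\qquad 1\to H_0\to H\cap K_2\xrightarrow{\phi_1}H_1\to 1,
\]
and their exactness recombines the kernel computation above with the surjectivity established in Part~1.

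The remaining axioms are formal. The pair $(\pi_i,\phi_i)$ being a principal bundle homomorphism amounts to the commutativity of the inclusion-then-quotient diagram $H\cap K_j\hookrightarrow H\twoheadrightarrow H_i$ with the right actions on $G$ and its quotients, which is immediate. The compatibility $p\cdot i_1(g_0)=p\cdot i_2(g_0)$ for $g_0\in H_0$ holds because both sides are right multiplication of $p\in G$ by the same element of $G_0\subset G$. Finally, for the $\mathbb{H}$-DPB structure, $G$ is an $H$-space under right multiplication and the two bundle actions of $H\cap K_1$, $H\cap K_2$ on $G$ are by construction the restrictions of this action, so all requirements are met.
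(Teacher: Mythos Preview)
You correctly isolate the surjectivity of $\psi:H\to H_1\times H_2$ (equivalently, the factorization $H=(H\cap K_1)(H\cap K_2)$) as the decisive step, but your proposed justification does not go through. First, the standing hypothesis after the preceding lemma is on $G$, not $H$, and a closed subgroup of a connected group need not be connected. More fundamentally, even when $H$ is connected the dimension argument becomes circular: in that lemma the key input is that $\phi_1:K_2\to G_1$ is surjective, which is part of the \emph{given} data of $\mathbb{G}$; the analogous surjectivity of $H\cap K_2\to H_1=H/(H\cap K_1)$ is equivalent to the very factorization you are trying to establish. A concrete failure: take $G=\mathbb{R}^2$ with $\varphi=\id$ (so $G_0=0$, $K_1=\{0\}\times\mathbb{R}$, $K_2=\mathbb{R}\times\{0\}$) and let $H$ be the diagonal line. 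Then $H$ is connected, $H\cap K_1=H\cap K_2=0$, hence $H_1\cong H_2\cong\mathbb{R}$, and $\psi$ is the diagonal embedding $\mathbb{R}\hookrightarrow\mathbb{R}^2$, which is not onto. The structure-group sequences $1\to H_0\to H\cap K_i\to H_{3-i}\to 1$ that you need for the DPB part then also fail.

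For comparison, the paper's own proof simply declares $h\mapsto(h(H\cap K_1),h(H\cap K_2))$ to be a group epimorphism without argument and then cites a standard reference for the principal bundle $G/(H\cap K_1)\to G/H$; it supplies no additional idea at the surjectivity step. The counterexample above shows the Proposition needs an extra hypothesis on $H$---for instance $H=(H\cap K_1)(H\cap K_2)$, or equivalently that $\varphi(H)\subset G_1\times G_2$ be a product subgroup---so the gap lies in the statement itself, not merely in your write-up.
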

\begin{proof}
	Since $H\cap K_i\tl H$, we deduce the following group epimorphism:
	\[
		\varphi: H\to H_1\times H_2: h\mapsto (h(H\cap K_1),h(H\cap K_2)),
	\]
	which shows $\mathbb{H}$ is a DLG. According to Proposition 1.59 in \cite{felix2008algebraic}, the manifold $G/H\cap K_1$ is a principal $H_1$-bundle over $G/H$. All the other claims are easy to check.
\end{proof}

\section{More on Double Lie Groups}\label{DLGDLA}

In this section, we concentrate on the double Lie groups and show that they can be used to illustrate the automorphisms of DVSs. First let us review the double Lie groups we defined in Section \ref{DPFB}: A double Lie group consists of four Lie groups $\mathbb{G}=(G;G_1,G_2)_{G_0}$ satisfying the following exact sequence:
\begin{equation}\label{DLGrecall}
	1\to G_0\to G \xrightarrow{\varphi} G_1\times G_2 \to 1.
\end{equation}
\begin{Ex}
	Let $G_1$, $G_2$ and $G_0$ be Lie groups and let $G=G_1\times G_2\times G_0$. Then $(G;G_1,G_2)_{G_0}$ is a double Lie group, which will be said \emph{trivial}. Moreover, if there is a group homomorphism $\phi:G_1\times G_2\to \Aut(G_0)$, we conclude that $(G';G_1,G_2)_{G_0}$ is a double Lie group, where $G'=(G_1\times G_2)\ltimes_{\phi} G_0$ is the semi-direct product of $G_1\times G_2$ and $G_0$ with respect to $\phi$.
\end{Ex}

In fact, by  the exact sequence \eqref{DLGrecall},  we see $G$ is the extension of $G_1\times G_2$ by $G_0$. So the two examples above are nothing but two special extensions. Before the discussion of the automorphism group of a double vector space,  we first recall the definition of double vector bundles:

\begin{Def}\label{DVB}\emph{\cite{Mackenzie1992Double}}
	A \emph{double vector bundle} $\mathbb{E}=(E;E_1,E_2;M)$ is a system of vector bundles
        \begin{equation*}
		\vcenter{\xymatrix{			
		           E \ar[d]^{\sigma_1} \ar[r]^{\sigma_2} &E_2 \ar[d]^{\delta_2}\\
			E_1 \ar[r]^{\delta_1} &M
		}},\qquad
		\vcenter{\xymatrix {
			V \ar[d]^{\sigma_1} \ar[r]^{\sigma_2} &V_2 \ar[d]\\
			V_1 \ar[r] &0
		}},
	\end{equation*}
	in which $E$ has two vector bundle structures, on bases $E_1$ and $E_2$,  which are themselves vector bundles on $M$, such that each of the structure maps of each vector bundle structure on $E$ (the bundle projection, addition, scalar multiplication and the zero section) is a morphism of vector bundles with respect to the other structure. A double vector bundle $\mathbb{V}=(V;V_1,V_2)$ over a point will be called a \emph{double vector space} (DVS). A morphism of double vector bundles 
	\[
		(a;a_1,a_2;a_M): (E;E_1,E_2;M)\to (E';E'_1,E'_2;M')
	\]
	consists of maps $a:E\to E'$, $a_1:E_1\to E'_1$, $a_2:E_2\to E'_2$ and $a_M:M\to M'$ such that each of $(a,a_1)$, $(a,a_2)$, $(a_1,a_M)$ and $(a_2,a_M)$ is a morphism of the relevant vector bundles.
\end{Def}

To simplify notations, $E_I$ and $E_{II}$ are used for the vector bundles $E\to E_1$ and $E\to E_2$ respectively. Correspondingly, use $0_I$ and $0_{II}$  to denote their zero sections. The zero section of $E_i$ is denoted by $0_i$. And $\sigma: E\to M$ is the following composition $\delta_1\circ \sigma_1=\delta_2\circ \sigma_2$. In fact, there is a third vector bundle $E_0$ on $M$ besides $E_1$ and $E_2$, known as the \emph{core}, which is an embedded submanifold of $E$:
\[
	E_0=\{u\in E\,|\,\exists m\in M \mbox{~such that~} \sigma_1(u)=0_1(m), \sigma_2(u)=0_2(m)\}.
\]

For a DVS $\mathbb{V}=(V;V_1,V_2)_{V_0}$, let $\Aut(\mathbb{V})$ be the automorphism group of $\mathbb{V}$,  which is consist of invertible morphisms of $\mathbb{V}$ and turns out to be a DLG. Denote
\[
	\Aut(V):=\{a\,|\,(a;a_1,a_2)\text{~is an automorphism of~} \mathbb{V}\}.
\]
\begin{Thm}\label{thm Aut} 
	Let $\mathbb{V}=(V;V_1,V_2)_{V_0}$ be a DVS.
	Then one has
	\[
		 \Aut(V)=\Aut(V_I)\cap \Aut(V_{II}) \cong (\GL(V_1)\times \GL(V_2))\ltimes (\GL(V_0)\ltimes T),
	\]
	where $T=\Hom (V_1\otimes V_2, V_0)$. Consequently, one gets the double Lie group:
	\begin{equation}\label{DL}
		\Aut(\mathbb{V}): \quad 1\to \GL(V_0)\ltimes T \to \Aut(V) \to \GL(V_1)\times \GL(V_2) \to 1.
 	\end{equation}
\end{Thm}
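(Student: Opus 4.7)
The plan is to fix a splitting $V \cong V_1 \oplus V_2 \oplus V_0$, for which the structure maps become $\sigma_i(v_1,v_2,v_0)=v_i$ and the two additions and scalar multiplications are the obvious componentwise ones (trivial in one side variable and linear in the remaining pair). Under this identification, write an arbitrary $a \in \Aut(\mathbb{V})$ as
\[
a(v_1,v_2,v_0) = \bigl(a_1(v_1),\, a_2(v_2),\, f(v_1,v_2,v_0)\bigr)
\]
for some map $f\colon V_1\times V_2\times V_0 \to V_0$, where the existence of $a_1,a_2$ and the form of the first two components come from $\sigma_i \circ a = a_i \circ \sigma_i$. The identity $\Aut(V)=\Aut(V_I)\cap\Aut(V_{II})$ is then essentially tautological: membership in $\Aut(V_I)$ produces a unique covering $a_1\in\GL(V_1)$, membership in $\Aut(V_{II})$ produces $a_2\in\GL(V_2)$, and together they give a DVS automorphism.

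The next step is to pin down $f$. Since the zero section $0_I\colon V_1\hookrightarrow V$ and $0_{II}\colon V_2\hookrightarrow V$ are bundle morphisms that $a$ must intertwine with $a_1,a_2$, one obtains $f(v_1,0,0)=0$ and $f(0,v_2,0)=0$. Restricting $a$ to the core gives $f(0,0,v_0)=a_0(v_0)$ for a well-defined $a_0 \in \GL(V_0)$. Now exploit linearity in both structures: writing $(v_1,v_2,v_0)=(v_1,v_2,0)+_I(v_1,0,v_0)$ in $V_I$ and $(v_1,v_2,v_0)=(v_1,v_2,0)+_{II}(0,v_2,v_0)$ in $V_{II}$, the two resulting expressions for $a(v_1,v_2,v_0)$ agree only if $f(v_1,0,v_0)=f(0,v_2,v_0)$ for all $v_1,v_2$, which forces $f(v_1,0,v_0)=a_0(v_0)$. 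Setting $\tau(v_1,v_2):=f(v_1,v_2,0)$, the additivity and scalar homogeneity axioms in both structures then translate directly into bilinearity of $\tau$, yielding
\[
a(v_1,v_2,v_0) = \bigl(a_1 v_1,\, a_2 v_2,\, a_0 v_0 + \tau(v_1,v_2)\bigr), \qquad \tau \in T = \Hom(V_1\otimes V_2, V_0).
\]
A direct check shows that any quadruple $(a_1,a_2,a_0,\tau) \in \GL(V_1)\times\GL(V_2)\times\GL(V_0)\times T$ defines an automorphism, invertibility of $a$ being equivalent to invertibility of $a_1,a_2,a_0$ (the $\tau$-term is nilpotent in a suitable sense and does not obstruct inversion).

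To identify the group law, compose two automorphisms in this parametrization and read off
\[
(a_1,a_2,a_0,\tau)\cdot(a_1',a_2',a_0',\tau') = \bigl(a_1a_1',\, a_2a_2',\, a_0a_0',\, a_0\circ\tau' + \tau\circ(a_1'\otimes a_2')\bigr).
\]
This exhibits: within the kernel of the projection $(a_1,a_2)\colon \Aut(V)\to \GL(V_1)\times\GL(V_2)$, the subgroup $T$ is normal and $\GL(V_0)$ acts on it by post-composition, yielding $\GL(V_0)\ltimes T$; and $\GL(V_1)\times\GL(V_2)$ acts on $\GL(V_0)\ltimes T$ by fixing $\GL(V_0)$ pointwise and pulling $\tau$ back via $(a_1^{-1}\otimes a_2^{-1})$. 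This is precisely the iterated semidirect product $(\GL(V_1)\times\GL(V_2))\ltimes(\GL(V_0)\ltimes T)$. The projection $a\mapsto(a_1,a_2)$ is then a surjective homomorphism whose kernel is $\GL(V_0)\ltimes T$, which is the exact sequence \eqref{DL} and endows $\Aut(\mathbb{V})$ with the structure of a double Lie group.

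The main obstacle is the bookkeeping in the middle step: extracting from the two commuting linearity conditions the statement that $f$ splits as $a_0(v_0)+\tau(v_1,v_2)$ with $\tau$ bilinear. The key trick is to decompose a general element in two different ways, one using $+_I$ and one using $+_{II}$, and exploit the uniqueness of the decomposition to force $f(v_1,0,v_0)$ to be independent of $v_1$; everything after that is a routine verification.
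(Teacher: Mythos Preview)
Your proposal is correct and follows essentially the same approach as the paper: fix a trivialization, use the zero sections to kill $f(v_1,0,0)$ and $f(0,v_2,0)$, then decompose a general element using the two additions to extract $f(v_1,v_2,v_0)=a_0(v_0)+\tau(v_1,v_2)$, and finally read off the group law from composition. The only cosmetic difference is that the paper computes $f(v_1,0,v_0)$ directly via a second $+_{II}$-decomposition using the already known $f(v_1,0,0)=0$, whereas you compare the $+_I$ and $+_{II}$ decompositions to force $f(v_1,0,v_0)=f(0,v_2,v_0)$; both arguments are equivalent. One small caveat: calling $\Aut(V)=\Aut(V_I)\cap\Aut(V_{II})$ ``essentially tautological'' skips the point that an element of $\Aut(V_I)$ a priori covers only a diffeomorphism of $V_1$, and one needs the $V_{II}$-linearity on the fibre over $0\in V_2$ to conclude that $a_1$ is linear (and symmetrically for $a_2$); the paper makes this explicit in one line.
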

\begin{proof}
	First let us prove this theorem for the trivial double vector space $\mathbb{V}$. We will write down the group $\Aut(V)$ explicitly in this case. Let $(a;a_1,a_2)$ be an automorphism of $\mathbb{V}$. Then  $a_1$ and $a_2$ are linear isomorphisms since $V_1$ and $V_2$ are vector spaces. Unravel $a$ as
	\begin{align*}
		a=(\psi_1,\psi_2,\psi_0): V_1\oplus V_2\oplus V_0 &\to V_1\oplus V_2\oplus V_0\\
		(v_1,v_2,v_0)&\mapsto (\psi_1(v_1,v_2,v_0),\psi_2(v_1,v_2,v_0),\psi_0(v_1,v_2,v_0)).
	\end{align*}
	An analysis of $\psi_i$ will show the structure of $\Aut(V)$. Since $(a,a_1)$ is a vector bundle isomorphism,  we know $a_1\circ\sigma_1=\sigma_1\circ a$ and $a\circ 0_I=0_I\circ a_1$, which imply $\psi_1(v_1,v_2,v_0)=a_1(v_1)$ and $\psi_0(v_1,0,0)=0$. Similarly, we have $\psi_2(v_1,v_2,v_0)=a_2(v_2)$ and $\psi_0(0,v_2,0)=0$. To prove that $\psi_0$ can be decomposed into the sum of a linear map and a bilinear map, let
	\[
		a_0(v_0):=\psi_0(0,0,v_0),\qquad \mu(v_1,v_2):=\psi_0(v_1,v_2,0).
	\]
	Direct computation gives
		\begin{align*}
			\psi_0(v_1,v_2,v_0)&=\psi_0(v_1,0,v_0)+_1\psi_0(v_1,v_2,0)=(\psi_0(v_1,0,0)+_2\psi_0(0,0,v_0))+_1\psi_0(v_1,v_2,0)\\
			&=a_0(v_0)+\mu(v_1,v_2).
		\end{align*}
	Thus $a$ can be written as $(a_1,a_2,a_0,\mu)$, where $\mu\in T$, such that 
	$$(a_1,a_2,a_0,\mu)	(v_1,v_2,v_0)=
	(a_1(v_1), \,a_2(v_2), \,a_0(v_0)+\mu(v_1,v_2)).$$
	
	 It is clear that the composition of $a$ and $b$ is the product as follows:
		\begin{equation}\label{Auto}
			(a_1,a_2,a_0,\mu)\cdot(b_1,b_2,b_0,\nu)=(a_1b_1,a_2b_2,a_0b_0,\mu\circ(b_1\times b_2)+a_0\circ\nu).
		\end{equation}	
	On the other hand, any quadruple $(a_1,a_2,a_0,\mu)$ induces an automorphism of $\mathbb{V}$. Thus we proved that $\Aut(V)=(\GL(V_1)\times \GL(V_2))\ltimes (\GL(V_0)\ltimes T)$. By definition, we have
	\[
		\Aut(V)\subset \Aut(V_I)\cap \Aut(V_{II}).
	\]
	On the other hand, for $f\in \Aut(V_I)\cap \Aut(V_{II})$, the induced map of $f$ on $V_2$ is linear since $f$  is linear on the fiber $(v_1,V_2\oplus V_0)$. Thus we get the inverse inclusion $\Aut(V)\supset \Aut(V_I)\cap \Aut(V_{II})$.
	To show the exact sequence \eqref{DL}, we define the group homomorphism:
	\[
		\varphi: \Aut(V)\to \GL(V_1)\times \GL(V_2): a\mapsto (\sigma_1\circ a\circ 0_I,\sigma_2\circ a\circ 0_{II}),
	\]
	where $0_I: V_1\to V$ and $0_{II}: V_2\to V$ are the zero sections. This ends the proof of this theorem for the trivial DVS.

	For a general double vector space $\mathbb{V}=(V;V_1,V_2)_{V_0}$, there is a diffeomorphism $(\Psi;\Psi_1,\Psi_2)$ from the trivial double vector space $V_1\oplus V_2\oplus V_0$ to $\mathbb{V}$ (see e.g. Proposition 2.11 in \cite{gracia2010lie}). Hence $\Psi$ induces an isomorphism of double Lie groups $\Aut(\mathbb{V})\cong\Aut(V_1\oplus V_2\oplus V_0)$ by
	\[
		a \mapsto \Psi^{-1}\circ a\circ\Psi,\quad a_i \mapsto \Psi_i^{-1}\circ a_i\circ\Psi_i,
	\]
	where $(a;a_1,a_2)$ is an automorphism of $\mathbb{V}$. It is direct to show the other results for $\mathbb{V}$.
\end{proof}

Just like representations of Lie groups, we have:
\begin{Def}
	A \emph{morphism} of DLGs $(\rho;\rho_1,\rho_2)_{\rho_0}: (G;G_1,G_2)_{G_0}\to (G';G_1',G_2')_{G'_0}$ consists of a system of Lie group homomorphisms: $\rho:G\to G'$ and $\rho_i:G_i\to G_i'$ for $i=0,1,2$:
	\begin{equation*}
			\vcenter{\xymatrix @ C=1.2pc @R=1.3pc{
		1\ar[r] & G_0\ar[r] \ar[d]^{\rho_0} &G \ar[r] \ar[d]^{\rho}&G_1\times G_2 \ar[r] \ar[d]^ {\rho_1\times \rho_2} & 1\\
		1\ar[r] & G'_0\ar[r]  &G' \ar[r]  &G'_1\times G'_2 \ar[r] & 1
		}}
	\end{equation*}
	such that the relevant diagrams commute. A \emph{representation} of $\mathbb{G}$ consists of a double vector space $\mathbb{V}=(V;V_1,V_2)_{V_0}$ and a double Lie group morphism $(\rho;\rho_1,\rho_2)_{\rho_0}$ from $\mathbb{G}$ to $\Aut(\mathbb{V})$.
\end{Def}

 For a vector space with a Lie group representation, its dual space is equipped with the dual representation. So it is natural to consider the dual representation of a DLG on a DVS. Let $\mathbb{V}=(V;V_1,V_2)_{V_0}$ be a DVS. Then its dual with respect to $V_1$ (\cite{konieczna1999double,Mackenzie1999}) is $\mathbb{V}_I^*=(V^*_I,V_1,V_0^*)_{V_2^*}$:
      \begin{equation*}
		\vcenter{\xymatrix @C=.6pc @R=.6pc{	
		V \ar[dd] \ar[rr] & &V_2 \ar[dd]\\
		 & V_0 \ar[dr]&\\
		V_1 \ar[rr] & &0
	}}\Longrightarrow
	\vcenter{\xymatrix @C=.6pc @R=.6pc{
		V^*_I \ar[dd] \ar[rr] & &V^*_0 \ar[dd]\\
		 & V^*_2 \ar[dr]&\\
		V_1 \ar[rr] & &0
	}},
	\end{equation*}
where $V^*_I\to V_1$ is the dual vector bundle of $V_I$. The duality interchanges the positions of $V_2$ and $V_0$. We get two double Lie groups $\Aut(\mathbb{V})$ and $\Aut(\mathbb{V}_I^*)$ as follows:
\begin{equation}\label{automorphism group}
	\vcenter{\xymatrix {		
		\Aut(V) \ar[d]^{\psi_1} \ar[r]^{\psi_2} &\GL(V_2) \ar[d]\\
		\GL(V_1) \ar[r]&e
	}},\quad
	\vcenter{\xymatrix{
		\Aut(V^*_I) \ar[d] \ar[r] &\GL(V^*_0) \ar[d]\\
		\GL(V_1) \ar[r]&e
	}}.
\end{equation}
And the corresponding exact sequence for the DLG $\Aut(\mathbb{V}_I^*)$ is 
\begin{equation}\label{autofstar}
	1\to G_0^*\to \Aut(V_I^*) \to \GL(V_1)\times \GL(V_0^*)\to 1,
\end{equation}
where the core
\[
	G_0^*=\GL(V_2^*)\ltimes T_I^*,  \quad   T_I^*=\Hom(V_1\otimes V_0^*,V_2^*).
\]

\begin{Pro}\label{Dual}
	There is a canonical anti-isomorphism between the Lie groups $\Aut(V)$ and $\Aut(V_{I}^*)$.
\end{Pro}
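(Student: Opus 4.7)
The natural candidate is linear dualization of the vector bundle structure $V_I\to V_1$. For $a\in\Aut(V)$ covering $a_1\in\GL(V_1)$, set $\Phi(a):=a^{*}\colon V_I^{*}\to V_I^{*}$, the standard dual vector bundle automorphism covering $a_1^{-1}$ and defined fiberwise by $(a^{*}\xi)(u)=\xi(au)$ for $\xi\in V_{I,v_1}^{*}$ and $u\in V_{I,a_1^{-1}v_1}$. Functoriality of the dual gives $\Phi(ab)=b^{*}a^{*}=\Phi(b)\Phi(a)$ at once, so the substantive content of the proposition is (i) that $\Phi(a)$ actually lies in $\Aut(V_I^{*})$ and (ii) that $\Phi$ is bijective.

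For (i), the map $\Phi(a)$ is tautologically an automorphism of the bundle $V_I^{*}\to V_1$; the question is whether it also respects the second vector bundle structure $V_I^{*}\to V_0^{*}$ of $\mathbb{V}_I^{*}$. Using the splitting argument at the end of the proof of Theorem \ref{thm Aut}, it suffices to treat a trivial DVS $V=V_1\oplus V_2\oplus V_0$. Writing $a=(a_1,a_2,a_0,\mu)$ in the normal form of Theorem \ref{thm Aut} and pairing fiberwise, a direct computation yields
\[
	a^{*}(v_1,\xi_0,\xi_2)=\bigl(a_1^{-1}v_1,\; a_0^{*}\xi_0,\; a_2^{*}\xi_2+\tilde\mu^{*}(a_1^{-1}v_1,\xi_0)\bigr),
\]
where $\tilde\mu^{*}\in T_I^{*}=\Hom(V_1\otimes V_0^{*},V_2^{*})$ is given by $\tilde\mu^{*}(v_1,\xi_0)(v_2)=\xi_0(\mu(v_1,v_2))$. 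Comparing with the normal form of $\Aut(V_I^{*})$ produced by Theorem \ref{thm Aut} applied to $\mathbb{V}_I^{*}$, this expression is precisely the automorphism $(a_1^{-1},a_0^{*},a_2^{*},\tilde\mu^{*}\circ(a_1^{-1}\times\id))\in\Aut(V_I^{*})$, settling (i).

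For (ii), the canonical identification $(V_I^{*})_I^{*}\cong V$ of double vector spaces yields, by the same recipe, a map $\Psi\colon\Aut(V_I^{*})\to\Aut(V)$; inspecting the coordinate formula above shows $\Psi\circ\Phi=\id$ on each constituent of $(a_1,a_2,a_0,\mu)$. Smoothness of $\Phi$ and $\Psi$ is manifest from the coordinate expressions, so $\Phi$ is a bijective Lie group anti-homomorphism, i.e.\ the claimed anti-isomorphism. As a sanity check one can verify $\Phi(ab)=\Phi(b)\Phi(a)$ directly against the composition law \eqref{Auto}.

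The main obstacle is purely the bookkeeping that produces the explicit formula for $\Phi(a)$ above. The key subtlety is that the dual of a vector bundle automorphism reverses the base map, sending $a_1$ to $a_1^{-1}$; combined with the semi-direct product structure from Theorem \ref{thm Aut}, this inversion is precisely what forces $\Phi$ to be anti-multiplicative rather than an ordinary isomorphism.
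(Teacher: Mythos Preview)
Your argument is correct and arrives at exactly the same coordinate formula as the paper: in the notation there, $f(a_1,a_2,a_0,\mu)=(a_1^{-1},a_0^{*},a_2^{*},\mu_I^{*}\circ a_1^{-1})$, which is your $(a_1^{-1},a_0^{*},a_2^{*},\tilde\mu^{*}\circ(a_1^{-1}\times\id))$. The organization, however, differs in a useful way. The paper simply writes down this formula and then devotes the bulk of the proof to a direct verification of the anti-homomorphism identity $f(ab)=f(b)f(a)$ against the product law \eqref{Auto}, comparing the four components termwise. You instead \emph{define} $\Phi(a)$ intrinsically as the dual vector-bundle automorphism $a^{*}$ of $V_I$; anti-multiplicativity is then automatic from contravariance of dualization, and the coordinate computation serves only to confirm that $a^{*}$ lands in $\Aut(V_I^{*})$ (i.e.\ respects the second bundle structure). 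Your bijectivity argument via the canonical identification $(V_I^{*})_I^{*}\cong V$ is likewise cleaner than simply asserting that the coordinate map is a bijection. The trade-off is that the paper's purely computational route requires no appeal to the double-dual identification for DVSs, whereas your approach makes the canonicity and the source of the ``anti'' transparent.
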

\begin{proof}
	Fix a decomposition of DVS $\mathbb{V}$: $\Psi: V_1\oplus V_2\oplus V_0 \to \mathbb{V}$. Then there is a natural decomposition $\Psi_I^*:V_1\oplus V_0^*\oplus V_2^* \to \mathbb{V}_I^*$  (see \cite{notes}).
	Define the following map:
	\[
		f:\Aut(V)\to \Aut(V_I^*):\Psi\circ(a_1,a_2,a_0,\mu)\circ\Psi^{-1}\mapsto \Psi_I\circ (a_1^{-1},a_0^*,a_2^*,\mu^*_I\circ a_1^{-1})\circ\Psi_I^{*-1},
	\]
	where $a_i^*$ is the transposition of $a_i$ and $\mu^*_I: V_1\times V_0^*\to V_2^*$ is the dual of $\mu$ with respect to $V_1$
	\[
		\langle\mu^*_I(v_1,\eta_0),v_2\rangle:=\langle\eta_0,\mu(v_1,v_2)\rangle.
	\] 
	For convenience, the decomposition $\Psi$ will be omitted in the following.

	We check that $f$ is an (anti-)isomorphism between $\Aut(V)$ and $\Aut(V^*_{I})$. Obviously, the map $f$ is a bijection. It suffices to prove $f$ is an (anti-)morphism, namely,
	\begin{equation}\label{homo}
		f\big((a_1,a_2,a_0,\mu)(b_1,b_2,b_0,\nu)\big)=f(b_1,b_2,b_0,\nu)f(a_1,a_2,a_0,\mu).
	\end{equation}
	By the group multiplication \eqref{Auto}, the left hand side of \eqref{homo} equals to
	\[
		\big((a_1b_1)^{-1},(a_0b_0)^*,(a_2b_2)^*,(a_0\circ\nu+\mu\circ(b_1\times b_2))_I^*\circ (a_1b_1)^{-1}\big),
	\]
	and the right hand side is
	\[
		\big(b_1^{-1}a_1^{-1},b_0^*a_0^*,b_2^*a_2^*,b_2^*\circ(\mu^*_I\circ a_1^{-1})+\nu^*_I\circ b_1^{-1}(a_1^{-1}\times a_0^*)\big).
	\]
	It is obvious that the first three terms are the same. For the last term,  acting on an element $(v_1,\eta_0)\in V_1\times V_0^*$ and taking pairing with $v_2\in V_2$, we find that both of them are equal to
	\[
		\langle \eta_0,a_0\nu((a_1b_1)^{-1}v_1,v_2)+\mu(a_1^{-1}v_1,b_2v_2)\rangle.
	\]
	This proves that $f$ is an (anti-)isomorphism.
\end{proof}

Thus, an $\Aut(V)$-space is also an $\Aut(V_I^*)$-space. So we have:
\begin{Cor}\label{Dual DPB}
	Suppose $\mathbb{V}=(V;V_1,V_2)$ is a DVS and $\mathbb{P}=(P;P_1,P_2;M)$ is an $\Aut(\mathbb{V})$-DPB. Then we have the following $\Aut(\mathbb{V}_I^*)$-DPB $\mathbb{P}_I^*:=(P;P_1,Q_2;M)$:
	\[
	\mathbb{P}_I^*:=\quad
		\vcenter{\xymatrix{
		P \ar[d] \ar[r] & Q_2 \ar[d]\\
			P_1 \ar[r]&M
		}}, \quad Q_2:=P/\GL(V_1)\ltimes G_0^*.
	\]
\end{Cor}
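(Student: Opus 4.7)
The plan is to transport the DPB structure along the anti-isomorphism $f:\Aut(V)\to\Aut(V_I^*)$ furnished by Proposition \ref{Dual}. Since $\mathbb{P}$ is an $\Aut(\mathbb{V})$-DPB, the total space $P$ carries a free right $\Aut(V)$-action. The first step is to define a right $\Aut(V_I^*)$-action on $P$ by $p\star h:=p\cdot f^{-1}(h^{-1})$. The anti-multiplicativity of $f^{-1}$ immediately yields $(p\star h)\star k=p\star(hk)$, and freeness transfers from the original action because $f$ is a bijection.

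The second step is to identify how $f$ matches up the structure subgroups. Writing $K_1,K_2\subset\Aut(V)$ for the kernels appearing in the DLG exact sequence of $\Aut(\mathbb{V})$ and $K_1',K_2'\subset\Aut(V_I^*)$ for those of $\Aut(\mathbb{V}_I^*)$, the explicit formula
\[
f(a_1,a_2,a_0,\mu)=(a_1^{-1},a_0^*,a_2^*,\mu_I^*\circ a_1^{-1})
\]
from the proof of Proposition \ref{Dual} makes it immediate that $f(K_1)=K_1'$ and $f(K_2)=K_2'$. In particular $K_2'=\GL(V_1)\ltimes G_0^*$, which matches the description of $Q_2$ in the statement. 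Because $f$ is bijective, the $K_i'$-orbits on $P$ under $\star$ coincide setwise with the original $K_i$-orbits, so $P/K_1'=P_1$ and $Q_2:=P/K_2'$ are smooth manifolds inheriting the requisite principal bundle structures.

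The final step is to verify the DPB axioms for $\mathbb{P}_I^*=(P;P_1,Q_2;M)$. The two epimorphisms $\phi_1',\phi_2'$ are supplied by the exact sequence \eqref{autofstar}, and both $P_1$ and $Q_2$ project to the same base $M$, since their defining orbit relations refine the original $\Aut(V)$-orbit relation on $P$ whose quotient is $M$ by Proposition \ref{PoverM}. That $(\pi_1,\phi_1')$ and $(\pi_2,\phi_2')$ are morphisms of principal bundles follows from $\star$-equivariance, which is just $\cdot$-equivariance precomposed with $f$. The core compatibility $p\cdot i_1(g_0)=p\cdot i_2(g_0)$ for the new core $G_0^*$ transfers from the corresponding identity in $\mathbb{P}$ via $f$.

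The main obstacle is bookkeeping: one must check that under $f$ the intricate semidirect product descriptions of both cores $G_0$ and $G_0^*$, together with their inclusions into $K_1,K_2$ and $K_1',K_2'$ respectively, are matched coherently so that the full exact sequence diagram for $\mathbb{P}_I^*$ commutes. Given the explicit formula for $f$ and the semidirect product structure worked out in Theorem \ref{thm Aut}, this reduces to a careful but routine unravelling rather than a conceptual difficulty.
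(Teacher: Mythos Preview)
Your argument contains a genuine error: the claim that $f(K_2)=K_2'$ is false. Recall that $K_2=\{(a_1,a_2,a_0,\mu):a_2=\id_{V_2}\}$, while in $\Aut(V_I^*)$ the subgroup $K_2'$ consists of those $(b_1,b_2,b_0,\nu)$ with $b_2=\id_{V_0^*}$ (here $b_2\in\GL(V_0^*)$, since $V_0^*$ sits in the ``$2$'' slot of $\mathbb{V}_I^*$). Applying the explicit formula $f(a_1,a_2,a_0,\mu)=(a_1^{-1},a_0^*,a_2^*,\mu_I^*\circ a_1^{-1})$, one sees that $f(K_2)$ is the subgroup of $\Aut(V_I^*)$ with \emph{third} component (the core slot, i.e.\ $\GL(V_2^*)$) equal to the identity, not the second. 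Thus $f(K_2)\neq K_2'$; the paper in fact remarks immediately after this corollary that $f$ does \emph{not} carry $\GL(V_2)$ to $\GL(V_0^*)$, so the DLGs $\Aut(\mathbb{V})$ and $\Aut(\mathbb{V}_I^*)$ are not isomorphic.

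Consequently the $K_2'$-orbits under your action $\star$ do \emph{not} coincide with the original $K_2$-orbits, so you cannot simply declare $Q_2:=P/K_2'$ to be a smooth manifold by inheritance from $P_2$; indeed $Q_2\neq P_2$ in general, which is why the statement introduces $Q_2$ as a new space. The paper closes this gap differently: having shown $f(K_1)=K_1'$ (which you also correctly observe, and which yields $P/K_1'\cong P_1$), it invokes Proposition~\ref{PoverM} to see that $P\to M$ is a principal $\Aut(V_I^*)$-bundle, and then reads off the smooth structure on $Q_2$ and the principal bundle structures $P\to Q_2$ and $Q_2\to M$ from local trivializations of that bundle. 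Your careful handling of the action via $p\star h:=p\cdot f^{-1}(h^{-1})$ is fine (and arguably more precise than the paper's shorthand), but the remainder of the argument must be rebuilt along these lines.
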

\begin{proof}
	By the exact sequence \eqref{autofstar}, let $K_1':=\GL(V_0^*)\ltimes G_0^*$ and $K_2':=\GL(V_1)\ltimes G_0^*$. Note that the isomorphism $f$ also induces an isomorphism between $K_1$ and $K_1'$. Thus, the $\Aut(V)$-space $P$ is also an $\Aut(V_I^*)$-space with the action $pg^*:=pf^{-1}(g^*)$ and the quotient $P/K_1'$ is diffeomorphic to $P_1$. By Proposition \ref{PoverM}, $P$ is a principal $\Aut(V_I^*)$-bundle over $M$. Suppose $U\times \Aut(V_I^*)$ is a local trivialization of the bundle $P\to M$, where $U$ is an open subset in $M$. Then $Q_2$ becomes a smooth manifold by taking the local chart $U\times \GL(V_0^*)$. Thus it is clear now that $P\to Q_2$ and $Q_2\to M$ are principal bundles. The rest is easy to check.
\end{proof}

As the isomorphism $f$ in Proposition \ref{Dual} does not induce an isomorphism between $\GL(V_2)$ and $\GL(V_0^*)$, the two DLGs $\Aut(\mathbb{V})$ and $\Aut(\mathbb{V}^*_I)$ are not isomorphic. So there is not a natural dual representation of a DLG $\mathbb{G}$ on the dual DVS $\mathbb{V}^*_{I}$.

The local structure  of a double Lie group $\mathbb{G}$ is characterized by  its Lie algebra  $\Lie(\mathbb{G})$:
	\begin{equation}\label{dla}
	0\to \mathfrak{g}_0\to \mathfrak{g} \to \mathfrak{g}_1\oplus \mathfrak{g}_2 \to 0.
	\end{equation}
We call a triple of Lie algebras $\widetilde{\mathfrak{g}}=(\mathfrak{g};\mathfrak{g}_1,\mathfrak{g}_2)$ satisfying exact sequence \eqref{dla} a
\emph{double Lie algebra} with  core   $\mathfrak{g}_0$.  Note that there is  not a natural DVS  structure  for $\widetilde{\mathfrak{g}}$ with the commutative diagram
\[
	\vcenter{\xymatrix{
		\mathfrak{g} \ar[d]^{\tau_1} \ar[r]^{\tau_2} &\mathfrak{g}_2 \ar[d]\\
		\mathfrak{g}_1 \ar[r]&0
	}},	
\]
since  $\mathfrak{g}_i$ is a quotient space 
instead of a subspace of $\mathfrak{g}$  by definition.

From \eqref{DL}, it is not hard to see that the Lie algebra $\mathfrak{aut}(\mathbb{V})$ of the DLG $\Aut(\mathbb{V})$
is given by 
\begin{equation}\label{dl}
	\mathfrak{aut}(\mathbb{V}): \quad 0\to \mathfrak{gl}(V_0)\ltimes T \to \mathfrak{aut}(V) \xrightarrow{\varphi} \mathfrak{gl}(V_1)\oplus \mathfrak{gl}(V_2) \to 0,
\end{equation}
where
\[
	\mathfrak{aut}(V)=\{(A_1,A_2,A_0,\alpha)\,|\,A_i\in  \mathfrak{gl}(V_i),\alpha\in
	T=\Hom (V_1\otimes V_2, V_0)\}.
\]
Actually, we have another characterization of $\mathfrak{aut}(V)$.
\begin{Pro}
	The Lie algebra $\mathfrak{aut}(V)$  is the semi-direct product of  $\mathfrak{gl}(V_1)\oplus \mathfrak{gl}(V_2)\oplus \mathfrak{gl}(V_0)$ and $T$ with the action given by
	\[
		(A_1,A_2,A_0)\triangleright \nu=A_0\circ \nu-\nu\circ(A_1\times I)-\nu\circ (I\times A_2).
	\]
\end{Pro}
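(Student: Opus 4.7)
The plan is to read off the semi-direct product structure at the group level from the explicit multiplication formula \eqref{Auto} in Theorem~\ref{thm Aut} and then to linearize it. Setting $\mu=\nu=0$ in \eqref{Auto} exhibits $H:=\GL(V_1)\times \GL(V_2)\times \GL(V_0)$, embedded via $(a_1,a_2,a_0)\mapsto (a_1,a_2,a_0,0)$, as a closed Lie subgroup of $\Aut(V)$; setting $a_i=b_i=I$ exhibits $T=\Hom(V_1\otimes V_2,V_0)$, embedded as $\mu\mapsto (I,I,I,\mu)$, as an abelian subgroup, since $(I,I,I,\mu)(I,I,I,\nu)=(I,I,I,\mu+\nu)$. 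Every element factors uniquely as $(a_1,a_2,a_0,\mu)=(a_1,a_2,a_0,0)\cdot(I,I,I,a_0^{-1}\circ\mu)$, so $\Aut(V)=H\cdot T$ with $H\cap T$ trivial.

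Next I would compute the conjugation action of $H$ on $T$. From $(a_1,a_2,a_0,0)^{-1}=(a_1^{-1},a_2^{-1},a_0^{-1},0)$ together with two applications of \eqref{Auto}, one obtains
\[
	(a_1,a_2,a_0,0)\cdot(I,I,I,\beta)\cdot(a_1,a_2,a_0,0)^{-1}=\bigl(I,I,I,\ a_0\circ\beta\circ(a_1^{-1}\times a_2^{-1})\bigr).
\]
This identity shows that $T$ is normal in $\Aut(V)$, hence $\Aut(V)=H\ltimes T$, and it records the $H$-action on $T$ explicitly. Passing to Lie algebras then gives $\mathfrak{aut}(V)=(\mathfrak{gl}(V_1)\oplus \mathfrak{gl}(V_2)\oplus \mathfrak{gl}(V_0))\ltimes T$, with $T$ an abelian ideal; the three $\mathfrak{gl}(V_i)$ summands mutually commute because their group counterparts already commute inside $H$.

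Finally, substituting the one-parameter subgroups $a_i(s)=\exp(sA_i)$ into the conjugation identity above and differentiating at $s=0$, the Leibniz rule applied to the three factors $a_0(s)$, $a_1(s)^{-1}$, $a_2(s)^{-1}$ produces
\[
	(A_1,A_2,A_0)\triangleright \nu=A_0\circ\nu-\nu\circ(A_1\times I)-\nu\circ(I\times A_2),
\]
which is precisely the claimed action. The only substantive computation is the conjugation identity itself; I do not anticipate a genuine obstacle, but one must be careful with the signs produced by differentiating the inverses $a_i(s)^{-1}$, which is where the two minus signs in the formula originate.
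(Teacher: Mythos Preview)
Your proposal is correct and follows essentially the same route as the paper: both compute the conjugation of $T$ by the $\GL(V_1)\times\GL(V_2)\times\GL(V_0)$ factor using the multiplication formula \eqref{Auto} and then differentiate to obtain the action. The only cosmetic difference is that you first spell out the group-level decomposition $\Aut(V)=H\ltimes T$ and then differentiate the conjugation once, whereas the paper states the full Lie bracket and verifies the crucial piece $[(A_1,A_2,A_0,0),(0,0,0,\nu)]$ via the double-derivative commutator formula; the underlying computation is identical.
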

\begin{proof}
	The Lie bracket of $\mathfrak{aut}(V)$ now is
	\begin{eqnarray*}
		[(A_1,A_2,A_0,\mu),(B_1,B_2,B_0,\nu)]=\big([A_1,B_1],[A_2,B_2],[A_0,B_0],\Upsilon\big),
	\end{eqnarray*}
	where
	\[
		\Upsilon=A_0\circ \nu-\nu\circ(A_1\times I)-\nu\circ (I\times A_2)-B_0\circ \mu+\mu\circ(B_1\times I)+\mu\circ(I\times B_2).
	\]
	We check the bracket of $(A_1,A_2,A_0,0)$ and $(0,0,0,\nu)$. Note that the exponential map $\exp:\mathfrak{aut}(V)\rightarrow \Aut(V)$ is
	\[
		\exp t(A_1,A_2,A_0,0)=(e^{tA_1},e^{tA_2},e^{tA_0},0),\qquad \exp t(0,0,0,\nu)=(I,I,I,t\nu).
	\]
	Thus, by the relation of the Lie bracket and the group structure, we get
	\begin{align*}
		&[(A_1,A_2,A_0,0),(0,0,0,\nu)]\\
		=&\frac{d}{dt}\frac{d}{ds}\Big|_{t,s=0} (e^{tA_1},e^{tA_2},e^{tA_0},0)(I,I,I,s\nu)(e^{-tA_1},e^{-tA_2},e^{-tA_0},0)(I,I,I,-s\nu)\\
		=&\frac{d}{dt}\frac{d}{ds}\Big|_{t,s=0} \Big(I,I,I,-s\nu+(e^{tA_0} \circ s\nu)(e^{-tA_1}\times e^{-tA_2})\Big)\\
		=&\big(0,0,0,A_0\circ \nu-\nu\circ (A_1\times I)-\nu\circ (I\times A_2)\big).
	\end{align*}
	This ends the proof.
\end{proof}

In general, a double Lie algebra $(\mathfrak{g};\mathfrak{g}_1,\mathfrak{g}_2)$ is a non-abelian extension of  $\mathfrak{g}_1\oplus \mathfrak{g}_2$ by $\mathfrak{g}_0$. In the rest of this section, we discuss a simple case that the core $\mathfrak{g}_0$ is an abelian Lie algebra. 

Let $\mathfrak{m}$ be a ($\mathfrak{g}_1\oplus \mathfrak{g}_2$)-module with a representation $\rho =\rho_1 + \rho_2 :\mathfrak{g}_1\oplus  \mathfrak{g}_2 \to \mathfrak{gl}(\mathfrak{m})$. Obviously, $\rho_1:\mathfrak{g}_1\to  \mathfrak{gl}(\mathfrak{m})$ and $\rho_2:\mathfrak{g}_2\to \mathfrak{gl}(\mathfrak{m})$ are representations of $\mathfrak{g}_1$ and $\mathfrak{g}_2$ respectively and satisfy $[\rho_1(\cdot),\rho_2(\cdot)]=0$. Every $2$-cochain $\omega\in C^2(\mathfrak{g}_1\oplus \mathfrak{g}_2, \mathfrak{m})$ can be split into three parts:
\[
	\omega=\omega_{2,0}+\omega_{1,1}+\omega_{0,2}\in \Hom(\wedge^2 \mathfrak{g}_1,\mathfrak{m})\oplus \Hom(\mathfrak{g}_1\wedge \mathfrak{g}_2,\mathfrak{m})\oplus \Hom(\wedge^2 \mathfrak{g}_2,\mathfrak{m}).
\]
Then, we have:
\begin{Lem}
	The 2-cochain $\omega$ is a 2-cocycle with respect to the $\mathfrak{g}_1\oplus \mathfrak{g}_2$-module if and only if $\omega_{2,0}$ and $\omega_{0,2}$ are 2-cocycles with respect to the $\mathfrak{g}_1$-module and $\mathfrak{g}_2$-module respectively and the following compatibility conditions: 
	\[
		d_{\mathfrak{g}_1}(\iota_a \omega_{1,1})=\rho_2(a)\circ \omega_{2,0},\qquad  d_{\mathfrak{g}_2}(\iota_x \omega_{1,1})=\rho_1(x)\circ \omega_{0,2},\qquad \forall a\in \mathfrak{g}_2,x\in \mathfrak{g}_1,
	\]
	where $\iota$ denotes the contraction.
\end{Lem}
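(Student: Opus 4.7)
The plan is to exploit the bigrading on the Chevalley--Eilenberg complex induced by the direct sum. Write
\[
	C^{p,q}:=\Hom(\wedge^p\mathfrak{g}_1\otimes\wedge^q\mathfrak{g}_2,\mathfrak{m}),\qquad C^n(\mathfrak{g}_1\oplus\mathfrak{g}_2,\mathfrak{m})=\bigoplus_{p+q=n}C^{p,q},
\]
so that $\omega=\omega_{2,0}+\omega_{1,1}+\omega_{0,2}$ is the decomposition of a general element of $C^2$ into its bigraded components. The first step is to verify that the CE differential splits as $d=d'+d''$ with $d'$ of bidegree $(1,0)$ and $d''$ of bidegree $(0,1)$. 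This relies on two observations: $[x,a]=0$ for $x\in\mathfrak{g}_1$ and $a\in\mathfrak{g}_2$, so the bracket sum in the CE formula never transports entries across the two factors, and $[\rho_1(\cdot),\rho_2(\cdot)]=0$, so the action part also preserves the grading. On the pure rows and columns one recovers $d'|_{C^{p,0}}=d_{\mathfrak{g}_1}$ and $d''|_{C^{0,q}}=d_{\mathfrak{g}_2}$.

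Next I would decompose the cocycle equation $d\omega=0$ bidegree by bidegree. Since $d\omega$ lies in $C^{3,0}\oplus C^{2,1}\oplus C^{1,2}\oplus C^{0,3}$, vanishing is equivalent to four simultaneous equations:
\begin{align*}
	(3,0):\quad & d_{\mathfrak{g}_1}\omega_{2,0}=0,\\
	(2,1):\quad & d''\omega_{2,0}+d'\omega_{1,1}=0,\\
	(1,2):\quad & d''\omega_{1,1}+d'\omega_{0,2}=0,\\
	(0,3):\quad & d_{\mathfrak{g}_2}\omega_{0,2}=0.
\end{align*}
The outer two equations are precisely the 2-cocycle conditions for $\omega_{2,0}$ and $\omega_{0,2}$ with respect to the $\mathfrak{g}_1$- and $\mathfrak{g}_2$-modules respectively.

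It then remains to rewrite the two middle equations in the stated contracted form. Evaluating $d''\omega_{2,0}$ on a triple $(x_1,x_2;a)$ with $x_i\in\mathfrak{g}_1$ and $a\in\mathfrak{g}_2$ kills every summand of the CE formula except the action term, leaving $\rho_2(a)\omega_{2,0}(x_1,x_2)$, because $\omega_{2,0}$ vanishes on any argument involving $\mathfrak{g}_2$ and because all cross-brackets vanish. On the other hand, $d'\omega_{1,1}$ on the same triple is, by the very definition of the contraction $\iota_a$ (and up to the sign from pulling the $\mathfrak{g}_2$-slot out of the antisymmetric argument), $\pm d_{\mathfrak{g}_1}(\iota_a\omega_{1,1})(x_1,x_2)$. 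Combining, the $(2,1)$-equation reduces to $d_{\mathfrak{g}_1}(\iota_a\omega_{1,1})=\rho_2(a)\circ\omega_{2,0}$, and a symmetric argument with the roles of $\mathfrak{g}_1$ and $\mathfrak{g}_2$ swapped yields the second compatibility from $(1,2)$.

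The main obstacle, really the only substantive point, is the careful verification of the bigraded splitting of $d$ together with the sign bookkeeping that matches $d'\omega_{1,1}$ to $d_{\mathfrak{g}_1}\circ\iota_a$. Both are routine once conventions are pinned down, but they must be unwrapped explicitly to land exactly on the form of the compatibility stated in the lemma.
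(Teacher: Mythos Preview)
The paper states this lemma without proof, so there is nothing to compare your argument against. Your bigraded approach is correct and is the natural way to prove the result: the splitting $d=d'+d''$ into bidegrees $(1,0)$ and $(0,1)$ holds precisely because $[\mathfrak{g}_1,\mathfrak{g}_2]=0$ in the direct sum and $[\rho_1(\cdot),\rho_2(\cdot)]=0$, and then reading off $d\omega=0$ componentwise gives exactly the four equations you list.

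One point worth tightening is the sign. With the standard CE convention, evaluating the $(2,1)$-component on $(x_1,x_2;a)$ gives
\[
(d\omega)(x_1,x_2,a)=\rho_2(a)\,\omega_{2,0}(x_1,x_2)+\rho_1(x_1)\omega_{1,1}(x_2,a)-\rho_1(x_2)\omega_{1,1}(x_1,a)-\omega_{1,1}([x_1,x_2],a),
\]
so the vanishing reads $d_{\mathfrak{g}_1}(\iota_a\omega_{1,1})=-\rho_2(a)\circ\omega_{2,0}$ if one takes $(\iota_a\omega_{1,1})(x)=\omega_{1,1}(x,a)$. The sign in the paper's displayed formula therefore depends on the authors' convention for $\iota$ (or for $d$); you flag this correctly, but in a final write-up you should fix a convention and carry the sign through explicitly rather than leaving it as $\pm$.
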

\begin{Ex}
    In particular, suppose that $\mathfrak{m}=\mathfrak{m}_1\oplus \mathfrak{m}_2$ with $\mathfrak{m}_1$ and $\mathfrak{m}_2$ being $\mathfrak{g}_1$ and $\mathfrak{g}_2$-modules respectively. So $\mathfrak{m}$ is naturally a $\mathfrak{g}$-module. Let $\omega_{2,0}\in \Hom(\wedge^2 \mathfrak{g}_1,\mathfrak{m}_1)$ and $\omega_{0,2}\in \Hom(\wedge^2 \mathfrak{g}_2,\mathfrak{m}_2)$ be two 2-cocycles and let $\theta_1\in \Hom(\mathfrak{g}_1,\mathfrak{m}_1)$ and $\theta_2\in \Hom(\mathfrak{g}_2,\mathfrak{m}_2)$ be two 1-cocycles relative to the corresponding $\mathfrak{g}_i$-module structure on $\mathfrak{m}_i$. Then the 2-cochian
    \[\omega=\omega_{2,0}+\theta_1\wedge \theta_2+\omega_{0,2}\in C^2(\mathfrak{g}_1\oplus \mathfrak{g}_2,\mathfrak{m}) \]
    is a 2-cocycle with respect to the $\mathfrak{g}_1\oplus \mathfrak{g}_2$-module structure on $\mathfrak{m}$.
\end{Ex}
\begin{Pro}
	Given a 2-cocycle  $\omega\in C^2(\mathfrak{g}_1\oplus \mathfrak{g}_2, \mathfrak{m})$, define a bracket on the
	direct sum space  $\mathfrak{g}=\mathfrak{g}_1\oplus \mathfrak{g}_2 \oplus  \mathfrak{m}$ as follows:
	\begin{align*}
		&[x+a+u,y+b+v]_{\mathfrak{g}}\\
		=&[x,y]_{\mathfrak{g}_1}+[a,b]_{\mathfrak{g}_2}+\omega(x+a,y+b)\\
		=&[x,y]_{\mathfrak{g}_1}+[a,b]_{\mathfrak{g}_2}+\omega_{2,0}(x,y)+\omega_{1,1}(x,b)-\omega_{1,1}(y,a)+\omega_{0,2}(a,b)
	\end{align*}
	Then  $(\mathfrak{g},\mathfrak{g}_1,\mathfrak{g}_2)$ forms a double Lie algebra with abelian core $\mathfrak{m}$.
\end{Pro}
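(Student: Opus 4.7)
The plan is to verify the Lie algebra axioms for $\mathfrak{g} = \mathfrak{g}_1 \oplus \mathfrak{g}_2 \oplus \mathfrak{m}$ with the prescribed bracket, and then exhibit the exact sequence required by the definition of a double Lie algebra.

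Skew-symmetry of $[\cdot,\cdot]_\mathfrak{g}$ is immediate, inherited from skew-symmetry of the brackets on the $\mathfrak{g}_i$ and from $\omega$ being a 2-cochain. For the Jacobi identity I would fix $X = x+a+u$, $Y = y+b+v$, $Z = z+c+w$ in $\mathfrak{g}$ and decompose the cyclic sum of $[[X,Y],Z]_\mathfrak{g}$ according to the splitting $\mathfrak{g}_1 \oplus \mathfrak{g}_2 \oplus \mathfrak{m}$. The $\mathfrak{g}_1$- and $\mathfrak{g}_2$-components vanish separately by the Jacobi identities in $\mathfrak{g}_1$ and $\mathfrak{g}_2$. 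The $\mathfrak{m}$-component, after cyclic summation, is precisely the Chevalley--Eilenberg expression $(d\omega)(X,Y,Z)$ for $\mathfrak{g}_1 \oplus \mathfrak{g}_2$ acting on $\mathfrak{m}$, which vanishes by the 2-cocycle hypothesis. Unpacking via $\omega = \omega_{2,0} + \omega_{1,1} + \omega_{0,2}$, the pure $(3,0,0)$ and $(0,3,0)$ contributions give the cocycle conditions on $\omega_{2,0}$ and $\omega_{0,2}$, while the mixed $(2,1,0)$ and $(1,2,0)$ contributions recover exactly the compatibility conditions
$$d_{\mathfrak{g}_1}(\iota_a\omega_{1,1}) = \rho_2(a)\circ\omega_{2,0}, \qquad d_{\mathfrak{g}_2}(\iota_x\omega_{1,1}) = \rho_1(x)\circ\omega_{0,2}$$
of the preceding lemma.

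For the double Lie algebra structure, the canonical projection $\pi:\mathfrak{g}\to\mathfrak{g}_1\oplus\mathfrak{g}_2$ kills the $\mathfrak{m}$-valued correction and is therefore a Lie algebra homomorphism onto the direct sum Lie algebra. The inclusion $\mathfrak{m}\hookrightarrow\mathfrak{g}$ is a Lie algebra homomorphism onto an abelian ideal, visible directly from the bracket formula which is independent of $u,v$ when $x=y=a=b=0$. These assemble into the short exact sequence
$$0 \to \mathfrak{m} \to \mathfrak{g} \to \mathfrak{g}_1 \oplus \mathfrak{g}_2 \to 0,$$
identifying $(\mathfrak{g};\mathfrak{g}_1,\mathfrak{g}_2)$ as a double Lie algebra, and the vanishing $[u,v]_\mathfrak{g}=0$ confirms that the core $\mathfrak{m}$ is abelian.

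The main obstacle is the bookkeeping in the Jacobi verification: correctly decomposing the Chevalley--Eilenberg 2-cocycle identity into its six tridegree components and matching them against the two compatibility relations supplied by the previous lemma, together with the pure cocycle conditions on $\omega_{2,0}$ and $\omega_{0,2}$.
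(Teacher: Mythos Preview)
The paper states this proposition without proof, treating it as an instance of the classical correspondence between abelian Lie algebra extensions and 2-cocycles; your outline is exactly that standard argument and is correct in substance.

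One caveat worth making explicit: the bracket formula as printed carries no representation terms $\rho(x+a)v-\rho(y+b)u$, so taken literally the $\mathfrak{m}$-component of the Jacobiator is the Chevalley--Eilenberg differential of $\omega$ for the \emph{trivial} action of $\mathfrak{g}_1\oplus\mathfrak{g}_2$ on $\mathfrak{m}$, not for the representation $\rho$ set up before the lemma. Either the bracket is intended to include those semidirect terms (so that $d_\rho\omega=0$ is the condition being checked, in line with the preceding lemma), or $\rho$ is tacitly trivial in this proposition. Your sentence identifying the Jacobiator with $(d\omega)(X,Y,Z)$ ``for $\mathfrak{g}_1\oplus\mathfrak{g}_2$ acting on $\mathfrak{m}$'' presumes the former reading; once you commit to one reading the rest of your argument goes through unchanged.
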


It is seen that  $\omega_{2,0}$ and $\omega_{0,2}$ give the abelian extensions of $\mathfrak{g}_1$ and $\mathfrak{g}_2$ respectively.  $\omega_{1,1}$ gives a nontrivial bracket between $\mathfrak{g}_1$ and  $\mathfrak{g}_2$.

\section{Associated Double Vector Bundles}\label{ADVB}
In this section, we present our main result of this paper: A double vector bundle can be realized as an associated bundle of a $\mathbb{G}$-DPB, with the fiber being a double vector space. This result is formulated by two theorems. First, we prove the associated bundle of a DPB is a DVB. Then we define the frame bundle of a DVB, proved to be a DPB, whose associated bundle with respect to the trivial DVS is naturally isomorphic to the previous DVB.  Another interesting observation is that the dual double vector bundle can be obtained easily just by the dual representation of the automorphism group of a double vector space. 

Usually for a principal bundle $P(G,M)$ and a $G$-manifold $F$, the associated bundle of $P$ with fiber type $F$ is defined by
\[
	P\times_G F=\{[p,v]\,|\,(p,v)\sim (pg,g^{-1}v),\forall (p,v)\in P\times F, g\in G\}.
\]
In the case that $F$ is a vector space and the $G$-action is linear, the bundle $P\times_G F$ is a vector bundle on $M$. It is known that any vector bundle can be realized as an associated bundle of a principal bundle, e.g., its frame bundle.

\begin{Thm}\label{main}
	Let $\mathbb{P}$  be a  $\mathbb{G}$-DPB and	$\mathbb{V}$ a $\mathbb{G}$-module given by
	\[
		\vcenter{\xymatrix{
			P \ar[d]^{\pi_1} \ar[r]^{\pi_2} &P_2 \ar[d]^{\lambda_2}\\
			P_1 \ar[r]^{\lambda_1} &M
		}},\quad
		\vcenter{\xymatrix{
			G \ar[d]^{\varphi_1} \ar[r]^{\varphi_2} &G_2 \ar[d]\\
			G_1 \ar[r] &e
		}},\quad
		\vcenter{\xymatrix{
			V \ar[d]^{\sigma_1} \ar[r]^{\sigma_2} &V_2 \ar[d]\\
			V_1 \ar[r]&0
		}}.
	\]	
	Then one can get an associated DVB $\mathbb{P}\times_{\mathbb{G}} \mathbb{V}$ as follows: 
	\[
		\vcenter{\xymatrix{
			P\times_G V \ar[d]^{\Pi_1} \ar[r]^{\Pi_2} &P_2\times_{G_2}  V_2 \ar[d]^{\Lambda_2} & \\
			P_1\times_{G_1}V_1 \ar[r]^{\Lambda_1} &M & P\times _{G} V_0\ar[l]^{\Lambda_0}
		}},
	\]
	where $\pi:=\lambda_1\circ\pi_1=\lambda_2\circ\pi_2$,
	\[
		\Pi_i([p,v])=[\pi_ip,\sigma_iv];  \quad \Lambda_i([p_i,v_i])=\lambda_i(p_i); \quad \Lambda_0([p,v_0])=\pi(p), \quad (i=1,2).
	\]
\end{Thm}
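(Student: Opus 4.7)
The plan is to first verify that the maps in the diagram are $G$-equivariant and descend to the associated-bundle quotients, then to build the double vector bundle structure over local trivializations of the single principal $G$-bundle $P\to M$ supplied by Proposition \ref{PoverM}, and finally to patch globally using that $\rho:\mathbb{G}\to\Aut(\mathbb{V})$ has image in DVS automorphisms. For well-definedness of $\Pi_i:[p,v]\mapsto[\pi_i p,\sigma_i v]$, I would use that $(\pi_i,\varphi_i):P\to P_i$ is a principal bundle homomorphism and that $\rho(g)$ induces on $V_i$ the action of $\varphi_i(g)\in\GL(V_i)$, so that
\[
\Pi_i([pg,g^{-1}v])=[\pi_i(p)\varphi_i(g),\varphi_i(g)^{-1}\sigma_i(v)]=[\pi_i p,\sigma_i v].
\]
The maps $\Lambda_i$ and $\Lambda_0$ descend to the quotients because $\lambda_i$ and the composition $\pi=\lambda_i\circ\pi_i$ are $G_i$- and $G$-invariant, respectively.

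Using Proposition \ref{PoverM}, I would pick a trivializing cover $\{U_\alpha\}$ of $M$ for the principal $G$-bundle $P\to M$, with cocycle $g_{\alpha\beta}:U_\alpha\cap U_\beta\to G$. The associated-bundle formalism then yields compatible local trivializations
\[
(P\times_G V)|_{U_\alpha}\cong U_\alpha\times V,\quad (P_i\times_{G_i}V_i)|_{U_\alpha}\cong U_\alpha\times V_i,\quad (P\times_G V_0)|_{U_\alpha}\cong U_\alpha\times V_0,
\]
under which the entire diagram of Theorem \ref{main} restricts to the trivial double vector bundle $U_\alpha\times\mathbb{V}$ in the sense of Definition \ref{DVB}. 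All four vector bundle structures and their mutual compatibility are therefore inherited locally from $\mathbb{V}$ itself, for free.

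To glue the local pictures into a global double vector bundle, one observes that the transition functions are $\rho\circ g_{\alpha\beta}:U_\alpha\cap U_\beta\to\Aut(\mathbb{V})$; invoking the characterization $\Aut(V)=\Aut(V_I)\cap\Aut(V_{II})$ from Theorem \ref{thm Aut}, each $\rho(g)$ is simultaneously an automorphism of $V\to V_1$ and of $V\to V_2$, preserves every DVS structure map, and sends the core $V_0$ into itself. Consequently the local DVB structures glue coherently, and the core of the resulting DVB is $\{[p,v]:\sigma_1 v=0=\sigma_2 v\}=P\times_G V_0$. The only genuine obstacle is verifying the DVB compatibility between the two vector bundle structures on $P\times_G V$; via the local trivializations this reduces to the same compatibility for $\mathbb{V}$, which is given, and the reduction is legitimate precisely because $\rho$ takes values in $\Aut(\mathbb{V})$ rather than merely in $\GL(V)$.
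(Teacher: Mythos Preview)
Your argument is correct, but the route is genuinely different from the paper's. The paper works intrinsically: it defines the fibrewise addition on $P\times_G V\to P_1\times_{G_1}V_1$ by the explicit formula
\[
[p,v]+_1[q,u]:=[p,\,v+_1 gu]\qquad(q=pg),
\]
checks well-definedness, builds the zero section and a local trivialization of this single vector bundle by hand, and then verifies the interchange law by a direct four-term computation, invoking a result of Gracia-Saz--Mehta to conclude that the square is a DVB. You instead trivialize the entire square at once over $U_\alpha$ as $U_\alpha\times\mathbb{V}$ and let the DVS $\mathbb{V}$ carry all the compatibility, with gluing handled by the observation that the cocycle lands in $\Aut(\mathbb{V})=\Aut(V_I)\cap\Aut(V_{II})$. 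The one point you leave implicit but should make explicit is that a trivialization of $P(G,M)$ over $U_\alpha$ descends, via $P_i=P/K_i$ and $G/K_i\cong G_i$, to compatible trivializations of $P_i(G_i,M)$ under which $\pi_i$ becomes $(m,g)\mapsto(m,\varphi_i(g))$; this is what makes $\Pi_i$ match $\id_{U_\alpha}\times\sigma_i$ locally. Your approach is cleaner and avoids the interchange-law computation altogether; the paper's approach has the advantage of producing the intrinsic formula for $+_i$ above, which it uses repeatedly afterwards (for the core comparison, the description of sections, and the duality statement).
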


Use $+_i$ and $\cdot_i$ for $i=1,2$ to denote the two different vector bundle structures on $V$ and $(\rho;\rho_1,\rho_2)$  to denote the representation of $\mathbb{G}$ on $\mathbb{V}$.  We will make the assumption that $G$ is either connected or $G=K_1K_2$. 
\begin{Lem}
	The projections $\Pi_i \, (i=1,2)$ are well-defined.
\end{Lem}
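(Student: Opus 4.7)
The plan is to check that if $(p',v') = (pg,\, g^{-1}v)$ for some $g\in G$, then $[\pi_i(p'), \sigma_i(v')] = [\pi_i(p), \sigma_i(v)]$ in $P_i\times_{G_i} V_i$. This reduces cleanly to establishing two compatibility relations:
\[
\pi_i(pg) = \pi_i(p)\cdot \varphi_i(g), \qquad \sigma_i(gv)=\varphi_i(g)\cdot \sigma_i(v),
\]
where the action on $V_i$ is through the induced representation $\rho_i$ of $G_i$.

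For the first relation I will exploit the standing assumption $G=K_1K_2$ (which the earlier lemma guarantees whenever $G$ is connected). Writing $g=k_1k_2$ with $k_i\in K_i$ and taking $i=1$, I first use that $P\to P_1$ is a principal $K_1$-bundle to get $\pi_1(pk_1)=\pi_1(p)$, and then apply the principal bundle homomorphism property of $(\pi_1,\phi_1)\colon P(K_2,P_2)\to P_1(G_1,M)$ to obtain $\pi_1(pk_1\cdot k_2)=\pi_1(pk_1)\cdot \phi_1(k_2)$. Since $\varphi_1(k_1)=e$ by definition of $K_1$ and $\phi_1=\varphi_1|_{K_2}$, the right-hand side is exactly $\pi_1(p)\cdot \varphi_1(g)$. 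The case $i=2$ is symmetric, using the decomposition $g=k_2'k_1'$ obtained by a parallel argument (or by invoking $G=K_2K_1$, which follows from the normality of the $K_i$).

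For the second relation I invoke Theorem~\ref{thm Aut}: each $\rho(g)\in \Aut(V)$ has the quadruple form $(a_1,a_2,a_0,\mu)$ and acts by $(v_1,v_2,v_0)\mapsto(a_1v_1,\,a_2v_2,\,a_0v_0+\mu(v_1,v_2))$, so $\sigma_i\circ\rho(g)=a_i\circ \sigma_i$. Because $(\rho;\rho_1,\rho_2)$ is a morphism of double Lie groups, $a_i=\rho_i(\varphi_i(g))$, which is precisely the stated equivariance.

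Combining the two relations yields
\[
\Pi_i([pg,\,g^{-1}v]) = [\pi_i(p)\cdot \varphi_i(g),\; \varphi_i(g)^{-1}\cdot \sigma_i(v)] = [\pi_i(p), \sigma_i(v)] = \Pi_i([p,v]),
\]
so $\Pi_i$ descends unambiguously to $P\times_G V$. The only genuinely delicate point is the first relation: for a $g$ outside $K_1K_2$ there is no way to split its effect on $\pi_i$ into a piece killed by one projection and a piece that passes through a $\phi_i$, so the hypothesis $G=K_1K_2$ is exactly what makes the proof go through.
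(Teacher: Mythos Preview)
Your proof is correct and follows essentially the same route as the paper: both establish the two equivariance relations $\pi_i(pg)=\pi_i(p)\varphi_i(g)$ and $\sigma_i(g^{-1}v)=\varphi_i(g)^{-1}\sigma_i(v)$, the first via the decomposition $G=K_1K_2$ and the second via the fact that $(\rho;\rho_1,\rho_2)$ is a morphism of double Lie groups. The only cosmetic difference is that where you unpack $\rho(g)$ into its quadruple form $(a_1,a_2,a_0,\mu)$ from Theorem~\ref{thm Aut}, the paper phrases the same step using the projection $\psi_1:\Aut(V)\to\GL(V_1)$ from diagram~\eqref{automorphism group} and the commutativity $\psi_1\circ\rho=\rho_1\circ\varphi_1$.
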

\begin{proof}
	We shall show that the definition of $\Pi_1$ only depends on the equivalence class. In other words, we have to prove $[\pi_1(p),\sigma_1(v)]=[\pi_1(pg),\sigma_1(g^{-1}v)]$ for $[p,v]=[pg,g^{-1}v]$. By definition, $\pi_1$ intertwines the actions of $G$ and $G_1$:  Since $G=K_1K_2$ as we assumed, for $g\in G$, there exist $k_1\in K_1$ and $k_2\in K_2$ such that $g=k_1k_2$, then
	\[
		\pi_1(pg)=\pi_1(pk_1k_2)=\pi_1(pk_1)\varphi_1(k_2)=\pi_1(p)\varphi_1(g),
	\]
	As $g^{-1}v=\rho(g^{-1})v$ and by definition of a representation for a DVB, we have
	\[
		\sigma_1(\rho(g^{-1})v)=\psi_1(\rho(g^{-1}))\sigma_1(v), \quad \psi_1(\rho(g^{-1}))=\rho_1(\varphi_1(g^{-1})),
	\]
	where the group morphism $\psi_1$ is defined in the diagram \eqref{automorphism group}. Thus $\sigma_1(g^{-1}v)=\varphi_1(g^{-1})\sigma_1(v)$, which means $\pi_1$ is well-defined.
\end{proof}
\begin{Lem}
    	$P\times_G V\xrightarrow{\Pi_1} P_1\times_{G_1} V_1$ is a vector bundle.
\end{Lem}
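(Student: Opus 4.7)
The plan is to equip each fibre of $\Pi_1$ with the vector space structure coming from $V_I$ and then produce local trivializations from the principal $G$-bundle structure on $P\to M$ furnished by Proposition~\ref{PoverM}.

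First I would analyse the fibres of $\Pi_1$. Given $[p_1,v_1]\in P_1\times_{G_1}V_1$ and any element of $\Pi_1^{-1}([p_1,v_1])$, the argument already used in the previous lemma, together with surjectivity of $\phi_1:K_2\to G_1$, shows that one can always choose a representative $(p,v)$ with $\pi_1(p)=p_1$ and $\sigma_1(v)=v_1$. Two such representatives $(p,v)$ and $(p',v')$ define the same class iff $p'=pk_1$ and $v'=k_1^{-1}v$ for some $k_1\in K_1$: indeed $G$ acts freely on $P$ by Proposition~\ref{PoverM}, and the equality $\pi_1(pg)=\pi_1(p)\varphi_1(g)=p_1$ forces $\varphi_1(g)=e$, i.e.\ $g\in K_1$. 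After fixing $p$ over $p_1$, the map $\sigma_1^{-1}(v_1)\to\Pi_1^{-1}([p_1,v_1])$, $v\mapsto[p,v]$, is then a bijection; transporting the linear structure of $\sigma_1^{-1}(v_1)$ yields a vector space structure on the fibre that is independent of $p$, since every $k_1\in K_1$ acts on $V_I$ by a vector bundle automorphism over $\mathrm{id}_{V_1}$ (use $\varphi_1(K_1)=e$ together with $G\subset\Aut(\mathbb{V})$).

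Next I would construct local trivializations. Pick $m_0\in M$, an open $W\subset M$ around it, and a local section $S:W\to P$ of the principal $G$-bundle $P\to M$; set $s_1:=\pi_1\circ S:W\to P_1$. The associated-bundle construction produces diffeomorphisms
\[
\Psi:W\times V\to(P\times_G V)|_W,\qquad(m,v)\mapsto[S(m),v],
\]
\[
\Psi_1:W\times V_1\to(P_1\times_{G_1}V_1)|_W,\qquad(m,v_1)\mapsto[s_1(m),v_1],
\]
and a direct check yields $\Pi_1\circ\Psi=\Psi_1\circ(\mathrm{id}_W\times\sigma_1)$. Hence over $W$ the projection $\Pi_1$ is identified with the pullback of the vector bundle $V_I\to V_1$ along the projection $W\times V_1\to V_1$, which is manifestly a vector bundle whose fibrewise linear structure matches the one of step one.

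Finally I would check compatibility on overlaps. If $S'(m)=S(m)g(m)$ on $W\cap W'$ with $g:W\cap W'\to G$ smooth, then the induced transition between the two local trivializations of $P\times_G V$ is $(m,v)\mapsto(m,g(m)^{-1}v)$, covering $(m,v_1)\mapsto(m,\varphi_1(g(m))^{-1}v_1)$ on the base. Because $g(m)$ acts on $V$ as a DVS automorphism, this map is smooth and fibrewise linear with respect to $V_I$, giving a vector bundle isomorphism between the two local models. The main obstacle is the intrinsic nature of the vector space structure on each fibre of $\Pi_1$ isolated in the first step; once $\varphi_1(K_1)=e$ is exploited to make that identification canonical, smoothness and the bundle axioms are delivered by the principal structure of $P\to M$.
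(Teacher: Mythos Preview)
Your proof is correct and follows essentially the same route as the paper: identify each fibre of $\Pi_1$ with a fibre of $V_I$ (the paper writes the resulting addition $[p,v]+_1[q,u]:=[p,v+_1 gu]$ explicitly, which is just your normalization), and build local trivializations from the principal $G$-bundle $P\to M$ of Proposition~\ref{PoverM}. Your presentation is slightly cleaner---using a single section $S:W\to P$ and recognizing $\Pi_1$ over $W$ as the pullback of $V_I$---while the paper separately writes out the zero section and composes with an explicit trivialization of $V_I$; and your overlap check makes explicit a point the paper leaves implicit. One cosmetic remark: you write ``$G\subset\Aut(\mathbb{V})$'', but in the general setting $G$ only acts via the representation $\rho:\mathbb{G}\to\Aut(\mathbb{V})$; the argument goes through unchanged since $\rho(k_1)$ is still a $V_I$-automorphism over $\id_{V_1}$ by $\varphi_1(K_1)=e$.
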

\begin{proof}
	We split the proof into three steps: 
	
	Step 1: Define a linear structure on the fiber of $[p_1,v_1]$. If $\Pi_1([p,v])=\Pi_1([q,u])$, then there $\exists! g\in G$ such that $q=pg$ because of the principal bundle structure of $P\to M$. Then we define the addition as:
	\begin{equation}\label{plus}
		[p,v]+_1 [q,u]:=[p,v+_1 gu].
	\end{equation}
	The scalar multiplication is obvious: For $r\in \mathrm{R}$, define $r \cdot_1 [p,v]:=[p,r\cdot_1 v]$.

	Then by definition, $[p,v]+_1 [q,u]=[p,v+_1 gu]=[q,g^{-1}v+_1u]=[q,u]+_1[p,v]$, which shows that $+_1$ is symmetric. Meanwhile, if $\Pi_1([p,v])=\Pi_1([q,u])=\Pi_1([p',v'])$ and $p'=qh$, then we obtain:
	\begin{align*}
		([p,v]+_1 [q,u])+_1[p',v']&=[q,g^{-1}v+_1u]+_1[p',v']=[q,g^{-1}v+_1u+_1hv']\\
		&=[p,v]+_1[q,u+_1hv']
		=[p,v]+_1([q,u]+_1[p',v']),
	\end{align*}
	which means $+_1$ is associative. This gives a linear structure on the fiber of $[p_1,v_1]$.

	Step 2: Prove the existence of the zero section. Let $0_I$ denote the zero section of $V\to V_1$. Define the zero section:
	\[
		\widehat{0}_I: P_1\times_{G_1} V_1\to P\times_G V: [p_1,v_1]\mapsto [p,0_I(v_1)],
	\]
	where $p\in P$ such that $\pi_1(p)=p_1$. First, it is clear that $[p,0_I(v_1)]$ is independent of the choice of $p$. Then we need to show $\widehat{0}_I$ is well-defined: For $[p_1,v_1]=[p_1g_1,g_1^{-1}v_1]$, choose $g\in G$ such that $\varphi_1(g)=g_1$ and $p\in P$ such that $\pi_1(p)=p_1$. Then we have $\pi_1(pg)=p_1g_1$ and $g^{-1}0_I(v_1)=0_I(g_1^{-1}v_1)$. Hence,
	$\widehat{0}_I([p_1g_1,g_1^{-1}v_1])=[pg,0_I(g_1^{-1}v_1)]=\widehat{0}_I([p_1,v_1])$.
	At last, we deduce $[p,v]+_1 [q,u]=[q,u]$ if and only if $[p,v]\in \im\widehat{0}_I$, which gives the uniqueness of the zero point on each fiber.
	
	Step 3: Show the existence of a local trivialization. Since $P_1$ and $P$ are principal bundles over $M$, there exists a neighborhood $U$ of $m\in M$ such that:
	\[
		\alpha_1: \lambda_1^{-1}(U)\xrightarrow{\sim} U\times G_1,\quad \alpha:\pi^{-1}(U)\xrightarrow{\sim} U\times G.
	\]
	Use $\beta_1$ to denote the composition of $\alpha_1$ and the projection from $U\times G_1$ to $G_1$. The map $\beta$ denotes the composition of $\alpha$ and the projection to $G$. Thus the following two maps are well-defined and 1-1:
	\begin{align*}
		\gamma_1:P_1\times_{G_1} V_1|_U\xrightarrow{\sim} U\times V_1&: [p_1,v_1]\mapsto (\lambda_1(p_1),\beta_1(p_1)v_1),\\
		\gamma:P\times_G V|_U\xrightarrow{\sim} U\times V&: [p,v]\mapsto (\pi(p),\beta(p)v).
	\end{align*}
	Take the open subset $W:=\gamma_1^{-1}(U\times V_1)$. Coupled with a trivialization of the vector bundle $V_I$ that $(\sigma_1,\tau):V|_{V_1}\xrightarrow{\sim} V_1\times \mathrm{R}^{n_1}$, we get a local trivialization of $P\times_G V\xrightarrow{\Pi_1} P_1\times_{G_1} V_1$ given by
	\[
		P\times_G V|_{W}\xrightarrow{\sim} W\times \mathrm{R}^{n_1},[p,v]\to \Big(\gamma_1^{-1}\Big(\pi(p),\sigma_1(\beta(p)v)\Big),\tau(\beta(p)v)\Big).
	\]
	Consequently, $P\times_G V$ is a vector bundle over $P_1\times_{G_1} V_1.$	
\end{proof}

\begin{proof}[Proof of Theorem \ref{main}]
	By Proposition 2.1 in \cite{gracia2010lie}, the only point remaining concerns  the interchange law. Without loss of generality, take four points in $P\times_G V$:
	\[
		[p,v],\quad[pk_1,u],\quad[pk_2,v'],\quad[pk_2k'_1,u'],
	\]
	for $k_1,k'_1\in K_1$ and $k_2\in K_2$ such that $(v,u),(v',u')\in V\times_{V_1} V$ and $(v,v'),(k_1u,k'_1u')\in V\times_{V_2} V$. Using the interchange law in the DVS $\mathbb{V}$, we have
	\begin{align*}
		&([p,v]+_1[pk_1,u])+_2([pk_2,v']+_1[pk_2k'_1,u'])=[p,(v+_1k_1u)+_2(k_2v'+_1k_2k'_1u')]\\
		=&[p,(v+_2k_2v')+_1(k_1u+_2k_2k'_1u')]=([p,v]+_2[pk_2,v'])+_1([pk_1,u]+_2[pk_2k'_1,u']).
	\end{align*}
	Then the interchange law holds. It is direct to see the core of the DVB $\mathbb{P}\times_{\mathbb{G}} \mathbb{V}$ is $P\times _G V_0$.
\end{proof}
From the above theorem, we get a DVB with core the vector bundle $P\times _G V_0$ on $M$. Recall that the core $P_C:= P\rightarrow P_1\times _M P_2$ of a DPB $\mathbb{P}$ is a principal $G_0$-bundle. As $G_0$ acts on $V_0$, we get its associated vector bundle $P\times _{G_0} V_0$ on $P_1\times _M P_2$. Using the natural map $\lambda_0=(\lambda_1,\lambda_2):P_1\times_M P_2\rightarrow M$, we establish the relation between these two vector bundles.

\begin{Pro}
    With notations above, 
    we have $P\times _{G_0} V_0\cong \lambda_0^*(P\times _G V_0)$. That is, the pull-back of the core of the associated DVB is isomorphic to the associated bundle of the core of the DPB:
	\[
		\vcenter{\xymatrix{
			P\times_{G_0} V_0 \ar[d] \ar[r]^(0.45){\sim} &\lambda^*_0(P\times _G V_0) \ar[d] \ar[r] &P\times_G  V_0 \ar[d]\\
			P_1\times_M P_2\ar[r]^{\id} &P_1\times_M P_2 \ar[r]^(0.6){\lambda_0} &M
		}}.
	\]
\end{Pro}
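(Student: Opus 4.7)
The plan is to write down an explicit bundle map $\Phi: P\times_{G_0} V_0 \to \lambda_0^*(P\times_G V_0)$ over $P_1\times_M P_2$ and verify it is a linear isomorphism on each fibre. The obvious candidate is
\[
    \Phi\bigl([p,v_0]_{G_0}\bigr) := \bigl((\pi_1(p),\pi_2(p)),\, [p,v_0]_G\bigr).
\]
Note that $\lambda_0(\pi_1(p),\pi_2(p)) = \pi(p)$, so the right-hand side lies in the pull-back. First I would check well-definedness under $g_0\in G_0$: since $G_0\subset K_1\cap K_2$ acts trivially in the $P_1$ and $P_2$ factors via $\pi_1$ and $\pi_2$, we have $(\pi_1(pg_0),\pi_2(pg_0))=(\pi_1(p),\pi_2(p))$, and $[pg_0,g_0^{-1}v_0]_G = [p,v_0]_G$ because $g_0\in G$ already gives the relation on the $G$-quotient.

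The key preliminary observation is that the $G$-action on $V$ preserves the core $V_0$: for $g\in G$ and $v\in V_0$ one has $\sigma_i(gv) = \psi_i(\rho(g))\,\sigma_i(v) = 0$ for $i=1,2$, using that $(\rho;\rho_1,\rho_2)$ is a representation of double Lie groups. Hence the $G$-action on $V_0$ used in $P\times_G V_0$ is well-defined. With this in hand, surjectivity of $\Phi$ works as follows: given $\bigl((p_1,p_2),[q,v_0]_G\bigr)$ with $\lambda_0(p_1,p_2)=\pi(q)$, pick any $p\in P$ with $(\pi_1(p),\pi_2(p))=(p_1,p_2)$; since $\pi(p)=\pi(q)$ there is a unique $g\in G$ with $q=pg$, so $[q,v_0]_G = [p,gv_0]_G$ with $gv_0\in V_0$, and therefore $\Phi([p,gv_0]_{G_0}) = ((p_1,p_2),[q,v_0]_G)$. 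For injectivity, if $\Phi([p,v_0]_{G_0})=\Phi([p',v_0']_{G_0})$, the equality of base points together with the $G_0$-principal bundle structure $P\to P_1\times_M P_2$ yields a unique $g_0\in G_0$ with $p'=pg_0$; the equality of the $G$-classes then forces $v_0' = g_0^{-1}v_0$, i.e.\ $[p',v_0']_{G_0}=[p,v_0]_{G_0}$.

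It remains to see that $\Phi$ is a morphism of vector bundles. Given any two points in the same fibre of $P\times_{G_0} V_0\to P_1\times_M P_2$, the proof of the previous theorem shows we may always choose representatives with a common first entry $p$; on such representatives $\Phi$ acts as $[p,v_0]_{G_0}\mapsto ((p_1,p_2),[p,v_0]_G)$, which is plainly linear in $v_0$ and respects the additive structure. Smoothness in both directions is automatic since both sides are quotients by free proper actions and $\Phi$ is induced by the identity on $P\times V_0\hookrightarrow P\times V$. I expect the only subtle point in the write-up to be keeping track of where we use the hypothesis $G=K_1K_2$ (or connectedness): it enters only implicitly, through the results of Theorem~\ref{main} which guarantee that the associated DVB on the right is genuinely a double vector bundle. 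Everything else is a bookkeeping exercise in coset calculus.
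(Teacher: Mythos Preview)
Your proof is correct and follows essentially the same approach as the paper's: the paper defines the explicit map in the opposite direction, $\lambda_0^*(P\times_G V_0)\to P\times_{G_0} V_0,\ ([p],[q,v])\mapsto \lfloor p,gv\rfloor$ where $q=pg$, which is precisely the inverse of your $\Phi$. Your added remark that the $G$-action preserves $V_0$ is a useful clarification the paper leaves implicit.
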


\begin{proof}
	Here we will use $[p]$ and $\lfloor p,v\rfloor$ to denote the element in $P_1\times_M P_2=P/G_0$ and $P\times_{G_0}V_0$, where $p\in P$ and $v\in V_0$. By definition,
	\[
		\lambda_0^*(P\times _G V_0)=P/G_0\times_M(P\times _G V_0)=\{([p],[q,v])|\lambda_0([p])=\pi(q)\}.
	\]
	Since $\pi(p)=\lambda_0([p])=\pi(q)$, there exists a unique $g\in G$ such that $q=pg$, which means we can rewrite every element in $\lambda_0^*(P\times_G V_0)$ as $([p],[p,gv])$. Then define
	\[
		\Phi: \lambda_0^*(P\times _G V_0)\to P\times_{G_0} V_0:  ([p],[q,v])\mapsto \lfloor p,gv\rfloor.
	\]
	First it is well-defined: $\Phi([ph],[ph,h^{-1}gv_0])=\lfloor ph,h^{-1}gv_0\rfloor=\lfloor p,gv_0\rfloor=\Phi([p],[p,gv_0])$, where $h\in G_0$. Secondly we check it is a bundle map and show that $\Phi_{[p]}$ is linear. For $([p],[q,v])$ and $([p],[q',v'])\in \lambda_0^*(P\times _G V_0)$, there exist $g,g'\in G$ such that $q=pg$ and $q'=pg'$. Thus by definition,
	\begin{align*}
		\Phi_{[p]}(([p],[q,v])+([p],[q',v']))&=\lfloor p,gv+g'v'\rfloor,\\
		\Phi_{[p]}([p],[q,v])+\Phi_{[p]}([p],[q',v'])=\lfloor p,gv\rfloor+\lfloor p,g'v'\rfloor&=\lfloor p,gv+g'v'\rfloor.
	\end{align*}
	Then it is clear that $\Phi_{[p]}$ is an isomorphism of vector spaces, which ends the proof.
\end{proof}

To show that a double vector bundle can be always realized as an associated  bundle of a $\mathbb{G}$-DPB, we now define the frame bundle of a DVB $\mathbb{E}=(E;E_1,E_2;M)_{E_0}$. Suppose the fiber dimension of the vector bundle $E_i$ is $n_i$ and take $n=n_1+n_2+n_0$. Let $[\bm{n}]$ be the triple $(n;n_1,n_2)$, which is called the \emph{fiber dimension} of $\mathbb{E}$. Let $\mathbb{R}^{[\bm{n}]}$ be the trivial DVS of fiber dimension $[\bm{n}]$:
\[
	\mathbb{R}^{[\bm{n}]}:=(\mathrm{R}^{n_1}\oplus \mathrm{R}^{n_2}\oplus \mathrm{R}^{n_0};\mathrm{R}^{n_1},\mathrm{R}^{n_2})_{\mathrm{R}^{n_0}}.
\]
We first consider the frame bundle of a DVS $\mathbb{V}=(V;V_1,V_2)_{V_0}$. By \cite{notes}, given an injective morphism of DVSs $\psi:V_1\oplus V_2\to V$ over the identity on $V_i$, which is called a \emph{linear splitting} of $\mathbb{V}$, a corresponding decomposition  of $\mathbb{V}$ is given by:
\begin{equation}\label{Decom}
	\Psi:V_1\oplus V_2\oplus V_0\xrightarrow{\sim} V:(v_1,v_2,v_0)\mapsto \psi(v_1,v_2)+_2(0_{II}(v_2)+_1 v_0).
\end{equation}
Given a bilinear form $\mu\in T=\Hom(V_1\otimes V_2, V_0)$, we get another linear splitting of $\mathbb{V}$:
\begin{equation}\label{Action}
	\psi'(v_1,v_2):=(\mu(v_1,v_2)+_10_{II}(v_2))+_2\psi(v_1,v_2).
\end{equation}
Besides, any two splittings differ by a bilinear form $\mu\in T$, which means the space  of all decompositions, denoted by $D(\mathbb{V})$, is an affine space  by translation of the vector space $T$. The transformations between different decompositions constitute the group $\Aut(V_1\oplus V_2 \oplus V_0)$ in the following sense:
\begin{equation*}
	\vcenter{\xymatrix @C=2.7pc @R=.7pc{
		& V_1\oplus V_2 \oplus V_0 \ar[dd]^a \ar[dl]_(.6){\Psi'} \\
		V & \\
		& V_1\oplus V_2 \oplus V_0 \ar[ul]^(0.6){\Psi}
	}}	
	\qquad \text{~where~}a:=\Psi^{-1}\circ\Psi' \in \Aut(V_1\oplus V_2 \oplus V_0).
\end{equation*}
By Equation \eqref{Decom}, direct computation gives
\begin{equation}\label{CoorT}
	a=\Psi^{-1}\circ\Psi'=(\id,\id,\id,\mu)\in \Aut(V_1\oplus V_2 \oplus V_0).
\end{equation}
On the other hand, given bases $\mathbf{u},\mathbf{v}$ and $\mathbf{w}$ of $V_1$, $V_2$ and $V_0$, any point $v\in V$ is uniquely decided by a triple $\xi:=(x,y,z)\in \mathrm{R}^{n_1}\oplus \mathrm{R}^{n_2}\oplus \mathrm{R}^{n_0}$
\[
	v=\Psi(\mathbf{u}x,\mathbf{v}y,\mathbf{w}z),
\]
where
\[
	\mathbf{u}x:=\sum u_ix^{i} \in V_1, \quad	 \mathbf{u}=(u_1,\cdots,u_{n_1}) \mbox{~and~} x=(x^{1},\cdots,x^{n_1}).
\]
Thus we introduce the following definition:
\begin{Def}
	For a double vector space $\mathbb{V}=(V;V_1,V_2)_{V_0}$, a \emph{frame} $\Omega$ of $V$ is a quadruple
	\[
		\Omega:=(\mathbf{u},\mathbf{v},\mathbf{w};\Psi)\in \Pi^{n_1} V_1\times \Pi^{n_2} V_2\times \Pi^{n_0} V_0\times D(\mathbb{V})
	\]
	where $\mathbf{u},\mathbf{v}$, $\mathbf{w}$ are frames of $V_1$, $V_2$, $V_0$ respectively and $\Psi$ is a decomposition.  Denote by $\mathcal{F}(V)$ the set of frames of $V$.
\end{Def}

For the DVB $\mathbb{E}$, fix $m\in M$, then $\mathbb{E}_m:=(E_m;E_{1,m},E_{2,m})_{E_{0,m}}$ is a DVS. Let $\mathcal{F}(E)=\bigsqcup_m \mathcal{F}(E_m)$, which can be viewed as a submanifold of the following fiber product if we fix a decomposition of $\mathbb{E}$
\[
	\Pi^{n_1} E_1\times_M \Pi^{n_2} E_2\times_M \Pi^{n_0} E_0\times_M \Hom(E_1\otimes E_2, E_0).
\]
Given a frame $\Omega=(\mathbf{u},\mathbf{v},\mathbf{w};\Psi)\in \mathcal{F}(E_m)$ at $m$ and a triple $\xi=(x,y,z)\in \mathrm{R}^{n_1}\oplus \mathrm{R}^{n_2}\oplus \mathrm{R}^{n_0}$, let
\begin{equation}\label{Omegaxi}
    	\Omega\xi:=\Psi(\mathbf{u}x,\mathbf{v}y,\mathbf{w}z)\in E_m.
\end{equation}
 The next lemma  is easy to be checked by  the analysis above.
\begin{Lem}
	The following natural action of the group $\Aut(\mathrm{R}^{[\bm{n}]})$ on $\mathcal{F}(E)$ defined by:
	\[
		\Omega a:=(\mathbf{u}a_1,\mathbf{v}a_2,\mathbf{w}a_0;\Psi_{\mu}), \quad a=(a_1,a_2,a_0,\widetilde{\mu})\in  \Aut(\mathrm{R}^{[\bm{n}]})
	\]
	is free and transitive, where $\mu\in \Hom(E_{1,m}\otimes E_{2,m}, E_{0,m})$ is given by
	\begin{equation}\label{action}
		\mu(\mathbf{u}x,\mathbf{v}y):=\mathbf{w}\widetilde{\mu}(a^{-1}_1x,a^{-1}_2y)\in E_{0,m},
	\end{equation}
	and
	\[
		(\Psi_{\mu})(v_1,v_2,v_0):=(\mu(v_1,v_2)+_1 v_0+_10_{II}(v_2))+_2\Psi(v_1,v_2,0).
	\]
\end{Lem}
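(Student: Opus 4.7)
The plan is to decompose the verification into three parts matching the factorization $\Aut(\mathbb{R}^{[\bm n]})\cong (\GL_{n_1}\times\GL_{n_2}\times\GL_{n_0})\ltimes T$ from Theorem \ref{thm Aut}, namely: well-definedness of $\Omega a$, freeness, and transitivity. The underlying picture is that a frame is a pair consisting of ordinary linear frames $(\mathbf{u},\mathbf{v},\mathbf{w})$ of the three spaces $V_1,V_2,V_0$ together with a decomposition $\Psi\in D(\mathbb{V})$, and these two data are acted on freely and transitively by $\GL_{n_1}\times\GL_{n_2}\times\GL_{n_0}$ and $T$ respectively in the classical way; the content of the lemma is then simply that these two actions assemble into an action of the semi-direct product.

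First I would check that $\Omega a$ is a frame, i.e.\ that $\Psi_\mu$ is still a decomposition of $\mathbb{V}=\mathbb{E}_m$. This is the content of \eqref{Action}: the map $\mu\in\Hom(V_1\otimes V_2,V_0)$ produced via \eqref{action} is bilinear because $\widetilde\mu$ is bilinear and $a_1,a_2$ are linear, and \eqref{Action} states that $\Psi_\mu$ defined by the given formula is again a linear splitting. Next I would verify the associativity $\Omega(ab)=(\Omega a)b$ and the identity axiom $\Omega\cdot e=\Omega$. On the $\mathbf{u},\mathbf{v},\mathbf{w}$-components this is just the standard right action of $\GL$ on ordered bases; on the decomposition component it reduces, via \eqref{CoorT}, to the semi-direct product law \eqref{Auto}. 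Here one has to be careful that the intertwiner $\widetilde\mu\mapsto\mu$ in \eqref{action} uses $a_1^{-1},a_2^{-1}$, which is exactly the adjustment that makes the two actions compose correctly.

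For freeness, suppose $\Omega a=\Omega$. Comparing the first three entries forces $\mathbf{u}a_1=\mathbf{u}$, $\mathbf{v}a_2=\mathbf{v}$, $\mathbf{w}a_0=\mathbf{w}$, hence $a_1=\id,\,a_2=\id,\,a_0=\id$ since each $\mathbf{u},\mathbf{v},\mathbf{w}$ is a basis. Then $\Psi_\mu=\Psi$, and because the translation action of $T$ on $D(\mathbb{V})$ is free (two decompositions differ by a unique element of $T$ by \eqref{Action}--\eqref{CoorT}), we obtain $\mu=0$, and then \eqref{action} gives $\widetilde\mu=0$. Thus $a=e$.

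For transitivity, take any two frames $\Omega=(\mathbf{u},\mathbf{v},\mathbf{w};\Psi)$ and $\Omega'=(\mathbf{u}',\mathbf{v}',\mathbf{w}';\Psi')$ at the same point $m$. Define $a_i\in\GL_{n_i}$ as the unique elements sending $\mathbf{u}\mapsto\mathbf{u}'$, $\mathbf{v}\mapsto\mathbf{v}'$, $\mathbf{w}\mapsto\mathbf{w}'$; these exist and are unique by the classical theory of ordinary frame bundles. After applying $(a_1,a_2,a_0,0)$ the first three entries match, so only the decompositions need to be aligned. By the affine structure on $D(\mathbb{V})$ recalled around \eqref{Action} and \eqref{CoorT} there is a unique $\mu\in\Hom(E_{1,m}\otimes E_{2,m},E_{0,m})$ with $\Psi'=(\Psi)_\mu$; converting $\mu$ back through the frames $\mathbf{u}',\mathbf{v}',\mathbf{w}'$ via \eqref{action} produces the required $\widetilde\mu$. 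Collecting, $a=(a_1,a_2,a_0,\widetilde\mu)$ satisfies $\Omega a=\Omega'$.

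The main obstacle, and the only point where one must not be cavalier, is the bookkeeping that matches the abstract semi-direct product multiplication \eqref{Auto} with the action \eqref{action} through the basis identifications; everything else reduces to the already-recorded facts that $\GL$ acts simply transitively on ordered bases and that $T$ acts simply transitively on $D(\mathbb{V})$.
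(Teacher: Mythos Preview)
Your proposal is correct and is exactly the natural elaboration of what the paper does: the paper gives no proof of this lemma beyond the sentence ``The next lemma is easy to be checked by the analysis above,'' so your three-part verification (well-definedness as an action via the semi-direct product law \eqref{Auto}, freeness from the simple transitivity of $\GL$ on bases together with the free $T$-action on $D(\mathbb{V})$, and transitivity by the same two ingredients) is precisely the unpacking the authors leave to the reader.
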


More detailed correspondence between 	$\widetilde{\mu}$ and $\mu$ is as follows. Let ${e_i}$, ${f_j}$ and ${l_k}$ be the natural bases of $\mathrm{R}^{n_1}$, $\mathrm{R}^{n_2}$ and $\mathrm{R}^{n_0}$ and ${e^i}$, ${f^j}$ be the dual bases. If we denote $\overline{\mathbf{u}}=\mathbf{u}a_1$, $\overline{\mathbf{v}}=\mathbf{v}a_2$ and $\overline{\mathbf{w}}=\mathbf{w}a_0$, Equation \eqref{action} is actually:
\[
	\widetilde{\mu}= \Sigma\mu_{ij}^k e^i\otimes f^j\otimes l_k\Rightarrow
	\mu=\Sigma \mu_{ij}^k \overline{u}^i\otimes \overline{v}^j\otimes \overline{w}_k,
\]
where $(\overline{u}^1,\cdots,\overline{u}^{n_1})$ is the dual basis of $\overline{\mathbf{u}}=(\overline{u}_1,\cdots,\overline{u}_{n_1})$. 

\begin{Thm}\label{FrameB3}
	Let $\mathbb{E}=(E;E_1,E_2)_M$ be a DVB with fiber dimension $[\bm{n}]$ and $\mathcal{F}(E_i)$ be the frame bundle of $E_i\to M, i=1,2$.

	(1) The quadruple $\mathcal{F}(\mathbb{E}):=(\mathcal{F}(E);\mathcal{F}(E_1),\mathcal{F}(E_2);M)$ is an $\Aut(\mathbb{R}^{[\bm{n}]})$-DPB, which is called the \emph{frame bundle} of $\mathbb{E}$. 
	
 	(2) The associated bundle $\mathcal{F}(\mathbb{E})\times_{\Aut(\mathbb{R}^{[\bm{n}]})}\mathbb{R}^{[\bm{n}]}$ is naturally isomorphic to the DVB $\mathbb{E}$.
\end{Thm}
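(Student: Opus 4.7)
The plan splits along the statement. For (1), I equip $\mathcal{F}(E)$ with a smooth structure and show the two projections $\pi_i\colon \mathcal{F}(E)\to \mathcal{F}(E_i)$ are principal $K_i$-bundles fitting into a DPB; for (2), I write down the natural evaluation map $[\Omega,\xi]\mapsto \Omega\xi$ and verify via Theorem \ref{main} that it is an isomorphism of DVBs.

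Fix a global decomposition of $\mathbb{E}$ (\cite[Proposition 2.11]{gracia2010lie}) and use it to identify $\mathcal{F}(E)$ with the fibre product
$$\mathcal{F}(E_1)\times_M \mathcal{F}(E_2)\times_M \mathcal{F}(E_0)\times_M \Hom(E_1\otimes E_2, E_0),$$
the last factor recording the affine bundle $D(\mathbb{V})$ modelled on $T$. The resulting smooth structure is independent of the chosen decomposition, and the projections $\pi_1,\pi_2$ simply read off the $\mathcal{F}(E_1)$- and $\mathcal{F}(E_2)$-components. By the lemma just before the theorem, the subgroup $K_1=\{(I,a_2,a_0,\widetilde{\mu})\}\subset \Aut(\mathbb{R}^{[\bm{n}]})$ preserves $\mathbf{u}$ and acts freely and transitively on the remaining data $(\mathbf{v},\mathbf{w};\Psi)$, so $\pi_1$ is a principal $K_1$-bundle; local sections arise from local decompositions of $\mathbb{E}$ combined with local frames of $E_2, E_0$. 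The symmetric argument handles $\pi_2$.

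The DPB axioms then reduce to
$$\pi_2(\Omega\cdot k_1)=\pi_2(\Omega)\cdot \phi_2(k_1),\qquad \Omega\cdot i_1(g_0)=\Omega\cdot i_2(g_0),$$
together with their symmetric analogues. Both are immediate from the explicit formula $\Omega\cdot a =(\mathbf{u}a_1,\mathbf{v}a_2,\mathbf{w}a_0;\Psi_\mu)$ established above, since $\phi_2$ is projection onto the $a_2$-component of $K_1$ and $i_1(g_0)=i_2(g_0)$ as elements of $\Aut(\mathbb{R}^{[\bm{n}]})$.

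For (2), define
$$\Phi\colon \mathcal{F}(\mathbb{E})\times_{\Aut(\mathbb{R}^{[\bm{n}]})}\mathbb{R}^{[\bm{n}]}\to E,\qquad [\Omega,\xi]\mapsto \Omega\xi,$$
with $\Omega\xi$ as in \eqref{Omegaxi}. The main obstacle is the well-definedness on equivalence classes, namely $(\Omega a)(a^{-1}\xi)=\Omega\xi$ for all $a\in \Aut(\mathbb{R}^{[\bm{n}]})$: one must verify that the way the $T$-component of $a$ shears the splitting $\Psi$ (via \eqref{Action} and \eqref{CoorT}) cancels exactly against the way it shears the $\mathrm{R}^{n_0}$-coordinate (via the multiplication \eqref{Auto}), while the change-of-basis action of $(a_1,a_2,a_0)$ matches $\xi \mapsto a^{-1}\xi$. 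Once this coordinate check succeeds, transitivity from the preceding lemma gives fibrewise bijectivity; the induced base maps are the classical identifications $\mathcal{F}(E_i)\times_{\GL(\mathrm{R}^{n_i})}\mathrm{R}^{n_i}\cong E_i$; and Theorem \ref{main} upgrades these fibrewise statements to an isomorphism of DVBs compatible with the two vector bundle structures.
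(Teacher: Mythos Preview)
Your proof is correct and follows the same line as the paper's, including the key coordinate computation $(\Omega a)(a^{-1}\xi)=\Omega\xi$ for part (2). The only organizational difference is in part (1): the paper first shows that $\pi\colon\mathcal{F}(E)\to M$ is a principal $\Aut(\mathbb{R}^{[\bm{n}]})$-bundle (via a local trivialization built from a fixed decomposition and local frames of $E_1,E_2,E_0$) and then obtains the principal $K_i$-bundles $\mathcal{F}(E)\to\mathcal{F}(E_i)$ as quotients $\mathcal{F}(E)/K_i\cong\mathcal{F}(E_i)$, whereas you verify the $K_i$-bundle structures and the DPB compatibility conditions directly. Your route makes the $\mathbb{G}$-DPB axioms more visibly checked, while the paper's route makes Proposition~\ref{PoverM} (that the total space of a $\mathbb{G}$-DPB is a principal $G$-bundle over $M$) come for free; either way the content is the same.
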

\begin{Rm}
	By the above theorem, $\mathcal{F}(\mathbb{V})=(\mathcal{F}(V);\mathcal{F}(V_1),\mathcal{F}(V_2);\pt)$ turns out to be a DPB  over a  point if $\mathbb{E}=\mathbb{V}$ is a DVS. Similar to the case of a vector space, the frame bundle of a DVS is isomorphic to its structure group $\Aut(\mathbb{R}^{[\bm{n}]})$  as DPBs, but  there is no canonical  isomorphism  between them.
\end{Rm}
\begin{proof}[Proof of Theorem \ref{FrameB3}.]
	Note that $\Aut(\mathbb{R}^{[\bm{n}]})$ is the following DLG:
	\[
		\Aut(\mathbb{R}^{[\bm{n}]}): 1\to \GL(\mathrm{R}^{n_0})\ltimes T \to \Aut(\mathrm{R}^{[\bm{n}]}) \xrightarrow{\varphi} \GL(\mathrm{R}^{n_1})\times \GL(\mathrm{R}^{n_2}) \to 1,
	\]
	where $T=\Hom (\mathrm{R}^{n_1}\otimes \mathrm{R}^{n_2}, \mathrm{R}^{n_0})$.
	Let $K_1=\varphi^{-1}(\{e\}\times \GL(\mathrm{R}^{n_2}))$ and $K_2=\varphi^{-1}(\GL(\mathrm{R}^{n_1})\times\{e\})$.

	For (1), if $\pi:\mathcal{F}(E)\to M$ is a principal bundle, where $\pi$ is the natural projection, we will have $\mathcal{F}(E)/K_i\cong \mathcal{F}(E_i)$. Hence $\mathcal{F}(E)\to \mathcal{F}(E_i)$ is a principal $K_i$-bundle. To show $\pi:\mathcal{F}(E)\to M$ is a principal $\Aut(\mathrm{R}^{[\bm{n}]})$-bundle, we need the local trivialization, which is constructed as follows: Fix a decomposition $\Psi_0$. For a small enough open subset $U\subset M$, take a local section:
	\[
		s: U\subset M\to \mathcal{F}(E_1)\times_M \mathcal{F}(E_2)\times_M \mathcal{F}(E_0),
	\]
	thus we know $(s(m);\Psi_{0,m})\in \mathcal{F}(E)_m$. Let
	\begin{align*}
		\alpha: \pi^{-1}(U)&\to U\times \Aut(\mathrm{R}^{[\bm{n}]})\\
		\Omega&\mapsto (\pi(\Omega), a), \mbox{~where~} a \mbox{~is uniquely defined by~} \Omega=(s(\pi(\Omega));\Psi_{0,\pi(\Omega)})a,
	\end{align*}
	which is well-defined and 1-1. Then we need to show that $\alpha$ is $\Aut(\mathrm{R}^{[\bm{n}]})$-equivariant. Let $\pi(\Omega)=m$ and suppose $\alpha(\Omega b)=(m,a')$. Then by definition, $\Omega b=(s(m);\Psi_{0,m})a'$, thus $\Omega=(s(m);\Psi_{0,m})a'b^{-1}$, which gives $a'b^{-1}=a$. Hence $\alpha(\Omega b)=(m,a')=(m,ab)=\alpha(\Omega)b$.

	To see (2),  we define:
	\[
		f: \mathcal{F}(E)\times_{\Aut(\mathrm{R}^{[\bm{n}]})}\mathrm{R}^{[\bm{n}]}\to E: [\Omega,\xi]\mapsto \Omega\xi,
	\]
	where  $\Omega\xi:=\Psi(\mathbf{u}x,\mathbf{v}y,\mathbf{w}z)\in E$ is defined in \eqref{Omegaxi}.
	To show that $f$ is well-defined, we need to prove
	\begin{equation}\label{well-defined}
		\Omega\xi=(\Omega a)\cdot(a^{-1}\xi),\qquad \forall a=(a_1,a_2,a_0,\widetilde{\mu})\in \Aut(\mathrm{R}^{[\bm{n}]}).
	\end{equation}
	In fact, first note that $a^{-1}=(a^{-1}_1,a^{-1}_2,a^{-1}_0,-a^{-1}_0\circ\widetilde{\mu}\circ(a^{-1}_1\times a^{-1}_2))$. Then we get
	\[
		a^{-1}\xi=a^{-1}(x,y,z)=(a^{-1}_1x,a^{-1}_2y,a^{-1}_0z-a^{-1}_0\widetilde{\mu}(a^{-1}_1x, a^{-1}_2y)).
	\]
	On the other hand, since $\Omega a=(\mathbf{u}a_1,\mathbf{v}a_2,\mathbf{w}a_0; \Psi_{\mu})$, by Equation \eqref{Omegaxi}, it follows that
	\[
		(\Omega a)\cdot(a^{-1}\xi)=\Psi_{\mu}(\mathbf{u}x,\mathbf{v}y,\mathbf{w}z-\mathbf{w}\tilde{\mu}(a^{-1}_1x, a^{-1}_2y)).
	\]
	By Equation \eqref{CoorT}, we know
	\[
		\Psi_{\mu}^{-1}\circ\Psi=(\id,\id,\id,-\mu),
	\]
	which yields Equation \eqref{well-defined}. It is routine to check that $f$ is 1-1 and hence induces an isomorphism of vector bundles.
\end{proof}

Our next concern is to look at the linear sections and core sections of $P\times_G V$ on $P_2\times_{G_2} V_2$ in the DVB $\mathbb{P}\times_{\mathbb{G}} \mathbb{V}$. For a DVB $\mathbb{E}=(E;E_1,E_2;M)$, first let us recall the definition of these two special types of sections of the vector bundle $E_{II} : E\rightarrow E_2$: 
\begin{Def}\emph{\cite{gracia2010lie}}
	A section $s\in \Gamma(E_{II})$ is \emph{linear} if it is a bundle morphism from $E_2$ to $E_I$. The space of linear sections is denoted as $\Gamma_{\ell}(E_{II})$. A section $s\in \Gamma(E_{II})$ is called a \emph{core section} if it is the image of the following embedding:
	\[
		t\in \Gamma(E_0)\hookrightarrow\bar{t}:=t\circ\delta_2+_10_{II}\in \Gamma(E_{II}),
	\]
	where $\delta_2: E_2\to M$ is the projection and $0_{II}: E_2\rightarrow E$ is the zero section. Denote by $\Gamma_C(E_{II})$ the space of core sections.
\end{Def}

The following proposition characterizes the general sections of $P\times_G V$ on $P_2\times_{G_2} V_2$:

\begin{Pro} The section space 
	$\Gamma(P\times_G V,P_2\times_{G_2} V_2)$ is  isomorphic to the function space:
	\[
		C(P\times V_2,V) = \{f:P\times V_2\to V\,|\, \sigma_2f(p,v_2)=v_2, \, f(pg,\varphi_2(g)^{-1} v_2)=g^{-1} f(p,v_2)\},
	\]
	where $p\in P$, $v_2\in V_2$ and $g\in G$. Moreover, one has 
	\[
		\Gamma(P\times_G V_0)\cong C(P,V_0):=\{f:P\rightarrow V_0\,|\, f(pg)=g^{-1} f(p)\}.
	\]
\end{Pro}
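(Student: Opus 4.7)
The plan is to establish the bijection by the standard associated-bundle dictionary, adapted to the nested principal structure coming from the DPB. Given a section $s$ of $\Pi_2$, for each $p\in P$ and $v_2\in V_2$ the point $[\pi_2(p),v_2]$ lies in $P_2\times_{G_2} V_2$, so $s([\pi_2(p),v_2])$ is an equivalence class in $P\times_G V$ that must admit a representative with first entry equal to $p$ (since $p$ itself is a representative lifting of $\pi_2(p)$). I would \emph{define} $f(p,v_2)\in V$ to be the unique $V$-element such that $s([\pi_2(p),v_2]) = [p, f(p,v_2)]$; the condition $\sigma_2 f(p,v_2)=v_2$ then follows from $\Pi_2\circ s =\id$ by picking the representative of the target class with first entry $\pi_2(p)$. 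In the opposite direction, given $f$ in the function space, define $s([p_2,v_2]):=[p,f(p,v_2)]$ for any $p\in\pi_2^{-1}(p_2)$.

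The key verification is that these constructions are well-defined and mutually inverse, and that the equivariance condition on $f$ precisely matches the ambiguity in the choice of representative. For a change $(p_2,v_2)\rightsquigarrow (p_2 g_2, g_2^{-1}v_2)$ in $P_2\times V_2$, lift $g_2\in G_2$ to some $g\in G$ with $\varphi_2(g)=g_2$; then $pg$ is a lift of $p_2 g_2$, and the requirement $[pg, f(pg,\varphi_2(g)^{-1}v_2)] = [p, f(p,v_2)]$ in $P\times_G V$ forces exactly the identity $f(pg,\varphi_2(g)^{-1}v_2)=g^{-1}f(p,v_2)$. The complementary freedom, namely replacing $p$ by $pk_2$ with $k_2\in K_2$ (so that $\pi_2$ is unchanged), is a special case because $\varphi_2(k_2)=e$ for $k_2\in K_2=\varphi^{-1}(G_1\times\{e\})$, and then the equivariance reduces to $f(pk_2,v_2)=k_2^{-1}f(p,v_2)$, which is what is needed for independence of the lift.

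The part I expect to require the most care is bookkeeping of the two compatible equivalence relations (the $G$-action on $P\times V$ on one side, the $G_2$-action on $P_2\times V_2$ on the other) and checking that the subsidiary constraint $\sigma_2 f(p,v_2)=v_2$ is preserved under the $G$-action: this uses $\sigma_2\circ g = \varphi_2(g)\cdot \sigma_2$, which is exactly the statement that the representation $\rho$ of $\mathbb{G}$ on $\mathbb{V}$ intertwines $\varphi_2$ and the linear map on $V_2$ (the analogue of the compatibility verified in the lemma following Theorem~\ref{main} for $\Pi_1$). Once that is in place, smoothness transfers in both directions because $\pi_2$ admits local sections.

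For the \emph{moreover} part, recall from Proposition~\ref{PoverM} that $P\to M$ is itself a principal $G$-bundle, and that the core $V_0$ carries a linear $G$-action (the core of the representation $\rho$). Thus $P\times_G V_0\to M$ is an ordinary associated vector bundle, and the claimed isomorphism $\Gamma(P\times_G V_0)\cong C(P,V_0)$ is the classical identification of sections of an associated bundle with $G$-equivariant functions on the total space of the principal bundle; no further work beyond citing this standard fact is needed.
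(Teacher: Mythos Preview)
Your proposal is correct and follows essentially the same approach as the paper: both directions of the bijection are set up exactly as in the paper's proof (defining $f_s$ via $s([\pi_2(p),v_2])=[p,f_s(p,v_2)]$ and $s_f([p_2,v_2])=[p,f(p,v_2)]$), and the well-definedness checks you outline---the $K_2$-ambiguity in lifting $p_2$ and the $G_2$-ambiguity in the class $[p_2,v_2]$---match the paper's, though you spell out the latter more explicitly than the paper does. The ``moreover'' part is handled identically, as the classical equivariant-function description of sections of an associated bundle.
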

\begin{proof}
	For any $f\in C(P\times V_2,V)$, define a map $s_f$ by
	\[
		s_f: P_2\times_{G_2} V_2\to P\times_G V: [p_2,v_2]\mapsto[p,f(p,v_2)],\quad \forall p\in P\mbox{,~s.t.~} \pi_2(p)=p_2.
	\]
	The map $s_f$ is well-defined since it does not depend on the choice of $p$. Actually, for $p'=pk_2$, where $k_2\in K_2$, by the $g$-invariance of $f$, we get $$f(pk_2,v_2)=k_2^{-1}f(p,v_2) \Longrightarrow [pk_2,f(pk_2,v_2)]=[p,f(p,v_2)].
	$$
	Moreover, $s_f$ is a section as  $\sigma_2f(p,v_2)=v_2$. On the other hand, given a section $s\in \Gamma(P\times_G V,P_2\times_{G_2} V_2)$, define a function $f_s:P\times V_2\rightarrow V$ by the following relation:
	\[
		[p,f_s(p,v_2)]=s([\pi_2(p),v_2]).
	\]
	It is simple to see that $\sigma_2f_s(p,v_2)=v_2$. Moreover, by definition,
	\[
		[pg,f_s(pg,\varphi_2^{-1}(g)v_2)]=s([\pi_2(p)\varphi_2(g),\varphi_2^{-1}(g)v_2])=s([\pi_2(p),v_2])=[p,f_s(p,v_2)],
	\]
	which implies that $f_s(pg,\varphi_2^{-1}(g)v_2)=g^{-1}f_s(p,v_2)$. Thus we have $f_s\in C(P\times V_2,V)$. 

	We see right now that the previous two processes are invertible to each other. This ends the proof.
\end{proof}

Now it is clear that the space $\Gamma_{\ell}(P\times_G V,P_2\times_{G_2} V_2)$ of \emph{linear} sections is consist of functions $f:P\times V_2\to V$ such that  $f_p:V_2\to V \in \Gamma_{\ell}(V,V_2),\forall p\in P$. And the space $\Gamma_{\ell}(V,V_2)$ fits into the following exact sequence of vector spaces:
\begin{equation*}
	0\to V_2^*\otimes V_0\to \Gamma_{\ell}(V,V_2) \xrightarrow{} V_1 \to 0.
\end{equation*}
As the sections of $P\times_G V_0$ can be embedded into $\Gamma(P\times_G V,P_2\times_{G_2} V_2)$ by
\begin{eqnarray}\label{core}
c\in C(P,V_0)\mapsto \bar{c}\in  C(P\times V_2,V): (p,v_2)\mapsto c(p)+_1 0_{II}(v_2),
\end{eqnarray} the space $\Gamma_C(P\times_G V,P_2\times_{G_2} V_2)$ of \emph{core} sections is the image of \eqref{core}. 

At the end of this section, we  explain the duality of a DVB from the  viewpoint of  associated bundles. First let us recall what is the \emph{dual} of a double vector bundle (see \cite{konieczna1999double, mackenzie2005duality} for more details): Dualizing either structure on $E$ leads to a double vector bundle:
	\begin{equation*}	
       \vcenter{\xymatrix  @C=.6pc @R=.6pc{  
		E \ar[dd] \ar[rr] & &E_2 \ar[dd]\\
		 & E_0 \ar[dr]&\\
		E_1 \ar[rr] & &M
	}} \Longrightarrow
       \vcenter{\xymatrix  @C=.6pc @R=.6pc{ 
		E^*_I \ar[dd] \ar[rr] & &E^*_0 \ar[dd]\\
		 & E^*_2 \ar[dr]&\\
		E_1 \ar[rr] & &M
	}},
   \end{equation*}
where $E^*_I\to E_1$ is the dual vector bundle of $E_I$. The duality interchanges the positions of $E_2$ and $E_0$, which is natural from Theorem \ref{thm Aut} and Proposition \ref{Dual}: The dual of a bilinear map with respect to one component exchanges the positions of the other two spaces.

Remember that $\Aut(\mathbb{V})$ and $\Aut(\mathbb{V}_I^*)$ are automorphism groups of a DVS $\mathbb{V}=(V;V_1,V_2)_{V_0}$ and its dual $\mathbb{V}_I^*=(V_I^*;V_1,V_0^*)_{V_2^*}$ respectively as given in \eqref{automorphism group}. For an $\Aut(\mathbb{V})$-DPB  $\mathbb{P}=(P;P_1,P_2,M)$ and its dual $\Aut(\mathbb{V}_I^*)$-DPB $\mathbb{P}_1^*=(P;P_1,Q_2;M)$ as in Corollary \ref{Dual DPB}, we get two associated DVBs by Theorem \ref{main}:
\[
 	\mathbb{P}\times_{\Aut(\mathbb{V})} \mathbb{V}: \quad
	\vcenter{\xymatrix {
	P\times_{\Aut(V)} V \ar[d]^{\Pi_1} \ar[r] &P_2\times_{\GL(V_2)} V_2 \ar[d]\\
		P_1\times_{\GL(V_1)} V_1 \ar[r] &M
	}},
\]
and
\[
	\mathbb{P}_I^*\times_{\Aut(\mathbb{V}_I^*)} \mathbb{V}_I^*:\quad
	\vcenter{\xymatrix {
		P\times_{\Aut(V_I^*)} V_I^* \ar[d]^{\Pi_1^*} \ar[r] &Q_2\times_{\GL^*(V_0)} V_0^* \ar[d]\\
		P_1\times_{\GL(V_1)} V_1 \ar[r] &M
	}}.
\]
\begin{Pro}
	The two associated DVBs $\mathbb{P}\times_{\Aut(\mathbb{V})} \mathbb{V}$ and $\mathbb{P}_I^*\times_{\Aut(\mathbb{V}_I^*)} \mathbb{V}_I^*$ are dual to each other with respect to $P_1\times_{\GL(V_1)} V_1$.
\end{Pro}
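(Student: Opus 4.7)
The plan is to realize $\mathbb{P}_I^*\times_{\Aut(\mathbb{V}_I^*)} \mathbb{V}_I^*$ as the vector-bundle dual of $\mathbb{P}\times_{\Aut(\mathbb{V})} \mathbb{V}$ along the common side $P_1\times_{\GL(V_1)} V_1$, by constructing a fiberwise non-degenerate pairing from the canonical duality of $V_I$ and $V_I^*$ as vector bundles over $V_1$.

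First I would verify that the two candidates for the common base genuinely coincide: although the $\GL(V_1)$-action on $P_1$ coming from $\mathbb{P}_I^*$ differs from that coming from $\mathbb{P}$ by an inversion (the first slot of $f$ in Proposition \ref{Dual} introduces $a_1\mapsto a_1^{-1}$), this is cancelled by the matching inversion in the $\GL(V_1)$-action on the $V_1$-slot of $V_I^*$, so both associated-bundle constructions collapse to the single equivalence $(p_1,v_1)\sim(p_1 b, b^{-1} v_1)$. Over any class $[p_1,v_1]$ I then pick a lift $p\in P$ of $p_1$ and use freeness of the $\Aut(V)$- and $\Aut(V_I^*)$-actions on $P$ (Proposition \ref{PoverM}) to identify $\Pi_1^{-1}([p_1,v_1])$ with $V_I|_{v_1}$ via $v\mapsto[p,v]$ and $(\Pi_1^*)^{-1}([p_1,v_1])$ with $V_I^*|_{v_1}$ via $\xi\mapsto[p,\xi]^*$.

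Next I would set the candidate pairing to be $\langle[p,v],[p,\xi]^*\rangle:=\langle v,\xi\rangle$, using the canonical duality between $V_I|_{v_1}$ and $V_I^*|_{v_1}$. The key check is well-definedness: replacing $(p,v)$ by $(ph,h^{-1}v)$ for $h\in\Aut(V)$ forces the corresponding replacement $(p,\xi)^*\mapsto(ph,f(h)\xi)^*$ through the anti-isomorphism $f$ of Proposition \ref{Dual}, so the identity I must verify is
\[
\langle h^{-1}v,\, f(h)\xi\rangle \;=\; \langle v,\xi\rangle, \qquad \forall h\in\Aut(V).
\]
Writing $h=(a_1,a_2,a_0,\mu)$ and unpacking $f(h)=(a_1^{-1},a_0^*,a_2^*,\mu_I^*\circ a_1^{-1})$, the $a_2/a_2^*$ and $a_0/a_0^*$ contributions pair up as transposes, while the two terms produced by the bilinear parts $\mu$ and $\mu_I^*$ cancel by the defining relation $\langle \mu_I^*(v_1,\eta_0),v_2\rangle=\langle\eta_0,\mu(v_1,v_2)\rangle$. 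Non-degeneracy on each fiber is then inherited from vector-space duality, which identifies $\Pi_1^*$ with the dual vector bundle of $\Pi_1$.

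To finish, I would match the remaining DVB data: the other side $P_2\times_{\GL(V_2)} V_2$ and the core $P\times_{\Aut(V)} V_0$ of $\mathbb{P}\times_{\Aut(\mathbb{V})} \mathbb{V}$ must correspond, respectively, to the core $P\times_{\Aut(V_I^*)} V_2^*$ and the other side $Q_2\times_{\GL(V_0^*)} V_0^*$ of $\mathbb{P}_I^*\times_{\Aut(\mathbb{V}_I^*)} \mathbb{V}_I^*$. This is precisely the DVS-level swap $V_2\leftrightarrow V_0$ from Theorem \ref{thm Aut} and Proposition \ref{Dual}, transferred to associated bundles; the pairing above restricts to each of these two identifications. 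The main obstacle I anticipate is the bookkeeping in the invariance identity: reconciling the right $\Aut(V)$-action on $P$ with the $\Aut(V_I^*)$-action induced through the anti-isomorphism $f$, and tracking the several inverses and transposes so that they conspire to preserve the pairing --- essentially the same cancellation that powered the proof of Proposition \ref{Dual} itself.
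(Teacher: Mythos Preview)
Your approach is essentially the paper's own: pick a common lift $p\in P$, define the pairing via the canonical duality $\langle v,\xi\rangle$ on $V_I\times_{V_1} V_I^*$, and check invariance under change of lift through the anti-isomorphism $f$; the paper then separately checks the core pairing $P\times_{\Aut(V)}V_0$ against $Q_2\times_{\GL(V_0^*)}V_0^*$, as you also propose. One bookkeeping slip to watch: since the $\Aut(V_I^*)$-action on $P$ is $p\cdot g^*=pf^{-1}(g^*)$, passing from $p$ to $ph$ in the second bundle replaces $\xi$ by $f(h)^{-1}\xi$ rather than $f(h)\xi$, so the invariance identity you need is $\langle h^{-1}v,\,f(h)^{-1}\xi\rangle=\langle v,\xi\rangle$ --- precisely the inverse-tracking you already flagged as the delicate point.
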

\begin{proof}
	First we show that $P\times_{\Aut(V)} V$ and $P\times_{\Aut(V_I^*)} V_I^*$ are dual to each other as vector bundles over $P_1\times_{\GL(V_1)} V_1$. Take $[p,v]\in P\times_{\Aut(V)} V$ and $[q,\eta]\in P\times_{\Aut(V_I^*)} V_I^*$ such that $\Pi_1([p,v])=\Pi_1^*([q,\eta])$. Then there exists $g\in \Aut(V)$ such that $p=qg$. Thus the pairing can be defined by $\langle [p,v],[q,\eta]\rangle=\langle gv,\eta\rangle$. It is routine to check that the pairing is well-defined and non-degenerate. Then we check the core bundle $P\times_{\Aut(V)} V_0$ and $Q_2\times_{\GL^*(V_0)} V_0^*$ are dual to each other as vector bundles over  $M$. Take $[p,v]\in P\times_{\Aut(V)} V_0$ and $[[q],\eta]\in Q_2\times_{\GL^*(V_0)} V_0^*$ such that $\pi(p)=\pi(q)$, thus there exists $g\in \Aut(V)$ such that $p=qg$. Then the pairing can be defined by $\langle [p,v],[[q],\xi]\rangle=\langle gv,\xi\rangle$, which is also well-defined and non-degenerate.
\end{proof}

\begin{Rm}\label{Rm:4.12}
	In \cite{voronov, voronov2}, Voronov describes double vector bundles in the framework of graded manifolds. They are bi-graded manifolds, where the weights of coordinates are restricted to 0 and 1 only. The structure group, which corresponds to DLGs in our definition, consists of transformations respecting this bi-grading. Hence we believe that by considering more general bi-gradings, one can produce more examples of double Lie groups. Similarly, one could  generalize this construction to {\em multiple vector bundles} as well.

	Note that DVB in the original paper \cite{Pra} is described by using local trivialization and transition functions. This has been generalized to multiple vector bundles in \cite[Section 6.2]{voronov2012structure}. The structure group of such bundle structure hints the existence of the concepts of DLGs and the frame bundles of DVBs. Moreover, the constructions in \cite{voronov2012structure} provide a way to introduce {\em multiple Lie groups} and {\em multiple principal bundles}.
\end{Rm}

\section{Connections and Gauge Transformations}\label{App}

We first recall the definition of connections in a principal bundle. Let $P$ be a principal $G$-bundle over $M$ and 	$\widehat{\mathfrak{g}}$  denotes the vertical distribution on $P$ spanned by    the fundamental vector fields. That is
\[
 	\widehat{\mathfrak{g}}=\Span\{\hat{A}\,|\, \forall A\in \mathfrak{g}\}\subset TP, \quad
\hat{A}_p=\frac{d}{dt}\Big|_{t=0}p e^{tA}, \qquad\forall p\in P.
\]
 A \emph{connection} $H$ in $P$ is a $G$-invariant distribution on $P$ such that $TP=H\oplus \widehat{\mathfrak{g}}$.    
Here we propose a notion of connections in a DPB $\mathbb{P}=(P;P_1,P_2;M)$:
\[
	\vcenter{\xymatrix {
		P \ar[d]^{\pi_1} \ar[r]^{\pi_2} &P_2 \ar[d]^{\lambda_2}\\
		P_1 \ar[r]^{\lambda_1} &M
	}}.
\]
Set $\pi=\lambda_1\circ \pi_1=\lambda_2\circ \pi_2:P\rightarrow M$. Denote by $\pi^*(TM)$ the pullback bundle of $TM$ along $\pi$. 
\begin{Def}\label{projection connection}
	Let $\mathbb{P}=(P;P_1,P_2;M)$ be a DPB. A \emph{connection} in $\mathbb{P}$ is a pair of connections $(H_1,H_2)$ with $H_i$ in $P(K_i,P_i)$ for $i=1,2$ such that
	\begin{itemize}
		\item[1)] the intersection  $H_1\cap H_2$ is a $K_i$-invariant subbundle in $TP$ (i=1,2);
		\item[2)] the projection $H_1\cap H_2\rightarrow \pi^*(TM)$ is a bundle isomorphism. Namely, for any $p\in P$, the map $\pi_{*,p}: (H_1 \cap H_2)_p\to T_{\pi(p)} M$  is an isomorphism of vector spaces.
	\end{itemize}
	A connection $(H_1,H_2)$ in $\mathbb{P}$ is called flat if $H_i$ is a \emph{flat} connection in $P(K_i,P_i)$ for $i=1,2$. 
\end{Def}
\begin{Ex}
	Let $\mathbb{G}=(G;G_1,G_2)$ be a double Lie group, seen as a DPB. Let $H_1$ be a connection in $G(K_1,G_1)$ and $H_2$ a connection in $G(K_2,G_2)$. Then $(H_1,H_2)$ is a connection in $\mathbb{G}$ iff $H_1\cap H_2=\{0\}$.
\end{Ex}
Note that if $(H_1,H_2)$ is a connection in a DPB $\mathbb{P}$, then we have $\dim(H_1+H_2)=\dim(P_1)+\dim(P_2)-\dim(M)$. So the distribution $H_1+H_2$ becomes a connection in the core $P(G_0,P_1\times_M P_2)$ iff  it is transversal to the vertical distribution $\widehat{\mathfrak{g}_0}$.
The following theorem gives the motivation for the definition of connections in a DPB. 
\begin{Thm}\label{Push-forward a pair}
	Let $(H_1,H_2)$ be a connection in a DPB $\mathbb{P}=(P;P_1,P_2;M)$. Then
	\begin{itemize}
		\item[(1)] $L_i=\pi_{i*}(H_1\cap H_2)$ gives a connection in $P_i$ for $i=1,2$. Moreover, the connections $L_1$ and $L_2$ are flat if $(H_1,H_2)$ is  flat. 
		\item[(2)] the distribution $H_1+H_2$ is a connection in the core $P(G_0,P_1\times_M P_2)$ if and only if $(H_1+H_2)\cap \widehat{\mathfrak{g}_0}=\{0\}$.
	 \end{itemize}
\end{Thm}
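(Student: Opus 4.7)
The plan is to deduce $(1)$ and $(2)$ directly from the two axioms of a DPB connection: the $K_i$-invariance of $H_1\cap H_2$ and the isomorphism $\pi_*\colon (H_1\cap H_2)_p \xrightarrow{\sim} T_{\pi(p)}M$. Throughout I will write $\widehat{\mathfrak{k}_i}\subset TP$ (resp.\ $\widehat{\mathfrak{g}_i}\subset TP_i$) for the vertical distribution of the principal bundle $P\to P_i$ (resp.\ $P_i\to M$), and use freely that $H_i\oplus \widehat{\mathfrak{k}_i}=TP$.

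For $(1)$, the first step is to check that $L_i:=\pi_{i*}(H_1\cap H_2)$ is a well-defined subbundle of $TP_i$ of rank $\dim M$. By $K_i$-invariance of $H_1\cap H_2$ and $\pi_i\circ R_k=\pi_i$ for $k\in K_i$, the subspace $(L_i)_{\pi_i(p)}$ is independent of the chosen preimage $p$, and $\pi_{i*}$ restricted to $H_1\cap H_2$ is injective because $(H_1\cap H_2)\cap \widehat{\mathfrak{k}_i}\subset H_i\cap \widehat{\mathfrak{k}_i}=\{0\}$. For transversality to the vertical, a dimension count gives $\dim L_i+\dim \widehat{\mathfrak{g}_i}=\dim M+\dim G_i=\dim P_i$, so it suffices to show $L_i\cap \widehat{\mathfrak{g}_i}=\{0\}$: if $v\in H_1\cap H_2$ with $\pi_{i*}v\in \widehat{\mathfrak{g}_i}$, then $\pi_*v=\lambda_{i*}\pi_{i*}v=0$, which forces $v=0$ by axiom $2)$. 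For $G_i$-invariance of $L_i$, choose for each $g_i\in G_i$ a lift $k_j\in K_j$ with $\phi_i(k_j)=g_i$ ($j\neq i$); the intertwining identity $\pi_i\circ R_{k_j}=R_{g_i}\circ \pi_i$ together with $K_j$-invariance of $H_1\cap H_2$ yields $R_{g_i*}L_i=L_i$. For the flatness assertion, given $X,Y\in \Gamma(L_i)$ I would construct $K_i$-invariant horizontal lifts $\widetilde{X},\widetilde{Y}\in \Gamma(H_1\cap H_2)$ using local triviality of $P(K_i,P_i)$ and $K_i$-equivariance of $H_1\cap H_2$; since $H_1\cap H_2\subset H_j$ and each $H_j$ is involutive, $[\widetilde X,\widetilde Y]\in H_1\cap H_2$, hence $[X,Y]=\pi_{i*}[\widetilde X,\widetilde Y]\in L_i$.

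For $(2)$, first identify the vertical distribution of $P\to P_1\times_M P_2$: it is $\ker (\pi_{1*},\pi_{2*})=\widehat{\mathfrak{k}_1}\cap \widehat{\mathfrak{k}_2}$, which equals $\widehat{\mathfrak{g}_0}$ upon differentiating the equality $\widehat{G}_0=\widehat{K}_1\cap \widehat{K}_2$ established in the opening proposition. $G_0$-invariance of $H_1+H_2$ is immediate from $K_i$-invariance of each $H_i$ and the inclusions $i_i(G_0)\subset K_i$. For the complementary condition, a dimension count using $\dim H_i=\dim P_i$ and $\dim(H_1\cap H_2)=\dim M$ gives
\[
\dim(H_1+H_2)=\dim P_1+\dim P_2-\dim M=\dim (P_1\times_M P_2),
\]
so adding $\dim \widehat{\mathfrak{g}_0}=\dim G_0$ recovers $\dim P$. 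Consequently $TP=(H_1+H_2)\oplus \widehat{\mathfrak{g}_0}$ is equivalent to the triviality of the intersection, which is exactly the stated condition.

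The main technical point requiring care is that $H_1+H_2$ genuinely be a smooth subbundle rather than a pointwise sum of varying rank; this follows once $H_1\cap H_2$ has constant rank, which is guaranteed by axiom $2)$. A secondary subtlety in $(1)$ is the construction of the $K_i$-invariant horizontal lifts used for flatness, but this is standard once one observes that the orbit space of $H_1\cap H_2$ under the $K_i$-action is precisely $L_i$. No deeper obstruction appears: both statements are essentially bookkeeping on dimensions and equivariance, organized by the two DPB connection axioms.
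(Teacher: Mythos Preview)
Your argument is correct and follows essentially the same route as the paper: well-definedness of $L_i$ via $K_i$-invariance, $G_i$-invariance via lifts $k_j\in K_j$ with $\phi_i(k_j)=g_i$, the splitting $T_{p_i}P_i=L_i\oplus\widehat{\mathfrak{g}_i}$ via axiom~2), and the dimension count for $H_1+H_2$ are all handled the same way. You actually supply more detail than the paper does for the flatness claim and for part~(2), which the paper leaves to a remark preceding the theorem.
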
 
\begin{proof}
  For any $p_1\in P_1$, set $L_{1,p_1}:=\pi_{1*,p}(H_1\cap H_2)_p$ with $p$ such that $\pi_1(p)=p_1$. Define $L_2$ similarly. We claim $L_1,L_2$ are connections in $P_1$ and $P_2$ iff $(H_1,H_2)$ is a connection in the DPB $\mathbb{P}$. 
	First, we show $L_{1,p_1}$ is well-defined iff $H_1\cap H_2$ is $K_1$-invariant. Indeed, for $p'\in P$ such that $\pi_1(p')=p_1$, we have $p'=pk_1$ for some $k_1\in K_1$. Then following from $\pi_1=\pi_1\circ R_{k_1}$, we have that $L_{1,p_1}$ being well-defined means
	\[
		\pi_{1*,p}(H_1\cap H_2)_p =\pi_{1*,pk_1}(H_1\cap H_2)_{pk_1}=\pi_{1*,pk_1}R_{k_1*}(H_1\cap H_2)_p,
	\]
	which implies exactly
	\[
		R_{k_1*}(H_1\cap H_2)_p=(H_1\cap H_2)_{pk_1}.
	\]
	This is due to the fact that $R_{k_1*}(H_1\cap H_2)_p, (H_1\cap H_2)_{pk_1}\subset H_{1,pk_1}$ and $\pi_{1*,pk_1}:H_{1,pk_1}\to T_{p_1} P_1$ is an isomorphism. Moreover, we can choose $p$ such that $L_{1,p_1}$ depends on $p_1$ smoothly. Similarly, $L_{2,p_2}$ is well-defined iff $H_1\cap H_2$ is $K_2$-invariant.

	We now turn to check that $L_{1,p_1}$ is $G_1$-invariant, i.e. $R_{g_1*}L_{1,p_1}=L_{1,p_1g_1}.$ Choosing $k_2\in K_2$ such that $\phi_1(k_2)=g_1$, we have $\pi_1(pk_2)=p_1g_1$. Since $R_{g_1}\circ \pi_1=\pi_1\circ R_{k_2}$, we have
	\[
		R_{g_1*}L_{1,p_1}=R_{g_1*}\pi_{1*,p}(H_1\cap H_2)_p=\pi_{1*,pk_2}R_{k_2*}(H_1\cap H_2)_p.
	\]
	Since $H_1\cap H_2$ is $K_2$-invariant,  we obtain $R_{g_1*}L_{1,p_1}=L_{1,p_1g_1}$. It remains to prove
	\[
		T_{p_1} P_1=L_{1,p_1}\oplus \widehat{\mathfrak{g}_1}_{,p_1}.
	\]
	As $\lambda_{1*,p_1}(\widehat{\mathfrak{g}_1}_{,p_1})=0$,
	this holds iff $\lambda_{1*,p_1}:L_{1,p_1}\to T_{\pi_1(p_1)} M$ is an isomorphism. This is further equivalent to the fact that $\pi_{*,p}: (H_1\cap H_2)_p\to T_{\pi(p)} M$ is an isomorphism since $\pi_{1*,p}: (H_1\cap H_2)_p\to L_{1,p_1}$ is an isomorphism. This ends the proof.
\end{proof}

In particular, when $\mathbb{P}$ is a $\mathbb{G}$-DPB, given a  connection in $\mathbb{P}$, there exists a natural connection in the principal $G$-bundle $P(G,M)$. Conversely, under some assumptions, we can get a connection in $\mathbb{P}$ from a connection in $P(G,M)$. Moreover, there is also a connection in the core $P(G_0,P_1\times _M P_2)$.

\begin{Pro}
	Suppose that $(H_1,H_2)$ is a connection in a $\mathbb{G}$-DPB $\mathbb{P}$. Then the distribution $H_1\cap H_2$ is a connection in the principal $G$-bundle $P(G,M)$, which is flat if $(H_1,H_2)$ is flat.
\ \end{Pro}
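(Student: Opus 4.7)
The plan is to verify the two defining properties of a connection in the principal bundle $P(G,M)$ for the candidate distribution $H_1\cap H_2$, namely $G$-invariance and transversality to the vertical distribution, and then handle flatness as a separate (easy) step.

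First, for $G$-invariance: property (1) of Definition \ref{projection connection} directly says that $H_1\cap H_2$ is both $K_1$-invariant and $K_2$-invariant. Under our standing assumption that $G$ is connected (equivalently $G=K_1K_2$), any $g\in G$ factors as $g=k_1k_2$ with $k_i\in K_i$, so $R_{g*}=R_{k_2*}\circ R_{k_1*}$ and the two separate invariances combine to $G$-invariance of $H_1\cap H_2$.

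Second, for the decomposition $T_pP=(H_1\cap H_2)_p\oplus \widehat{\mathfrak{g}}_p$: by Proposition \ref{PoverM}, $\pi:P\to M$ makes $P$ a principal $G$-bundle over $M$, so the vertical distribution of $\pi$ is exactly $\widehat{\mathfrak{g}}$ and has rank $\dim G$; by property (2) of Definition \ref{projection connection}, $\pi_{*,p}$ restricts to an isomorphism $(H_1\cap H_2)_p\xrightarrow{\sim} T_{\pi(p)}M$, so $(H_1\cap H_2)_p$ has dimension $\dim M$ and meets $\ker \pi_{*,p}=\widehat{\mathfrak{g}}_p$ trivially. A count of dimensions then gives the direct sum. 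Combined with the $G$-invariance from the previous paragraph, $H_1\cap H_2$ is indeed a connection in $P(G,M)$.

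For flatness: if $H_1$ and $H_2$ are each involutive, then for any two vector fields $X,Y$ tangent to $H_1\cap H_2$ we have $X,Y\in \Gamma(H_i)$ so $[X,Y]\in \Gamma(H_i)$ for $i=1,2$, hence $[X,Y]\in \Gamma(H_1\cap H_2)$; so $H_1\cap H_2$ is involutive as well, i.e.\ the induced connection on $P(G,M)$ is flat. I do not foresee a serious obstacle: every step is a direct use of Definition \ref{projection connection} together with Proposition \ref{PoverM}, and the only mild subtlety is invoking $G=K_1K_2$ to upgrade $K_i$-invariance to $G$-invariance.
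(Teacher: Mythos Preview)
Your argument is correct and follows essentially the same line as the paper's own proof: $G$-invariance from $G=K_1K_2$ together with $K_i$-invariance, and the direct-sum decomposition from the isomorphism $(H_1\cap H_2)_p\cong T_{\pi(p)}M$ plus a dimension count. You even supply the short involutivity argument for the flatness claim, which the paper's proof leaves implicit.
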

\begin{proof}
	Since $H_1\cap H_2$ is $K_1$ and $K_2$-invariant, it is $G=K_1K_2$-invariant. Note that $\pi_{*,p}: (H_1 \cap H_2)_p\to T_{\pi(p)} M$ is an isomorphism and $\pi_{*,p}(\widehat{\mathfrak{g}})=0$. We have $(H_1\cap H_2)_p\cap \widehat{\mathfrak{g}}_p=0$. Comparing the dimensions, we get\[
	 (H_1\cap H_2)_p\oplus \widehat{\mathfrak{g}}_p=T_p P.\]
	So $H_1\cap H_2$ is a connection in the principal $G$-bundle $P(G,M)$.
\ \end{proof}

Let $\Lie(\mathbb{G})=(\mathfrak{g};\mathfrak{g}_1,\mathfrak{g}_2)_{\mathfrak{g}_0}$ be the Lie algebra of the DLG $\mathbb{G}$. We have an exact sequence of Lie algebras:
	\[
		\xymatrix@C=-1.3em{
		*+[l]{0 \to \mathfrak{g}_0\to \mathfrak{g}}\ar[r] & *+[r]{\mathfrak{g}_1 \oplus \mathfrak{g}_2 \to 0}\ar@/^/[l]
		}
	\]
Given a splitting $s:\mathfrak{g}_1\oplus \mathfrak{g}_2\rightarrow \mathfrak{g}$, the vector space $\mathfrak{g}$ is  decomposed as $\mathfrak{g}=\mathfrak{g}_0\oplus s(\mathfrak{g}_1\oplus \mathfrak{g}_2)$.
\begin{Pro}
	Let $H$ be a connection in the principal $G$-bundle $P(G,M)$. Extend it to two distributions 
	\[H_i:=H\oplus \widehat{s(\mathfrak{g}_i)}\subset TP,\qquad i=1,2.\]
	Obviously, we have $H=H_1\cap H_2$.
	Then $(H_1,H_2)$
	gives a connection in the $\mathbb{G}$-DPB $\mathbb{P}$ iff
	\[
		[\mathfrak{g}_0,s(\mathfrak{g}_1\oplus \mathfrak{g}_2)]=0,\quad \quad [s(\mathfrak{g}_1),s(\mathfrak{g}_2)]=0.
	\]
	Moreover, the extension $H\oplus \widehat{s(\mathfrak{g}_1\oplus \mathfrak{g}_2)}\subset TP$ is a connection in the core $P(G_0,P_1\times_M P_2)$ iff
	\[
		[\mathfrak{g}_0,s(\mathfrak{g}_1\oplus \mathfrak{g}_2)]=0.
	\] 
\end{Pro}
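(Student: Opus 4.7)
The plan is to translate the DPB connection conditions into Lie-algebra bracket relations by way of the splitting $s$, and then use that $\mathfrak{g}_0$ is an ideal and $\varphi:\mathfrak{g}\to\mathfrak{g}_1\oplus\mathfrak{g}_2$ is a morphism into a direct sum to squeeze the obstruction to invariance into $\mathfrak{g}_0\cap s(\mathfrak{g}_i)=0$.

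I would begin by fixing the splitting and identifying $\mathfrak{k}_1:=\Lie(K_1)=\mathfrak{g}_0\oplus s(\mathfrak{g}_2)$ and $\mathfrak{k}_2:=\Lie(K_2)=\mathfrak{g}_0\oplus s(\mathfrak{g}_1)$, so that the vertical distribution of $P(K_i,P_i)$ is $\widehat{\mathfrak{k}_i}$. Combining $H\cap\widehat{\mathfrak{g}}=0$ with the vector-space direct sum $\mathfrak{g}=\mathfrak{g}_0\oplus s(\mathfrak{g}_1)\oplus s(\mathfrak{g}_2)$ and a dimension count, one checks without any hypothesis on $s$ that $TP=H_i\oplus\widehat{\mathfrak{k}_i}$; so transversality is automatic, and the whole content lies in $K_i$-invariance.

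For invariance I would use $R_{g*}\hat{A}=\widehat{\Ad(g^{-1})A}$ together with the $G$-invariance of $H$ to rewrite $R_{k_1*}H_1=H_1$ as $R_{k_1*}\widehat{s(\mathfrak{g}_1)}\subset H_1$. Since the left side lies in $\widehat{\mathfrak{g}}$ and $H_1\cap\widehat{\mathfrak{g}}=\widehat{s(\mathfrak{g}_1)}$ (decomposing $\widehat{\mathfrak{g}}=\widehat{\mathfrak{g}_0}\oplus\widehat{s(\mathfrak{g}_1)}\oplus\widehat{s(\mathfrak{g}_2)}$), this collapses to $\Ad(k_1^{-1})s(\mathfrak{g}_1)\subset s(\mathfrak{g}_1)$ for all $k_1\in K_1$, which by the standing connectedness assumption is equivalent to $[\mathfrak{k}_1,s(\mathfrak{g}_1)]\subset s(\mathfrak{g}_1)$. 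Here comes the key observation: $[\mathfrak{g}_0,s(\mathfrak{g}_1)]\subset\mathfrak{g}_0$ because $\mathfrak{g}_0$ is an ideal, and $\varphi([s(\mathfrak{g}_2),s(\mathfrak{g}_1)])=[\mathfrak{g}_2,\mathfrak{g}_1]_{\mathfrak{g}_1\oplus\mathfrak{g}_2}=0$ so $[s(\mathfrak{g}_2),s(\mathfrak{g}_1)]\subset\mathfrak{g}_0$ as well. Since $\mathfrak{g}_0\cap s(\mathfrak{g}_1)=0$, the containment $[\mathfrak{k}_1,s(\mathfrak{g}_1)]\subset s(\mathfrak{g}_1)$ forces both $[\mathfrak{g}_0,s(\mathfrak{g}_1)]=0$ and $[s(\mathfrak{g}_1),s(\mathfrak{g}_2)]=0$; the symmetric computation for $H_2$ contributes $[\mathfrak{g}_0,s(\mathfrak{g}_2)]=0$, giving exactly the two stated conditions.

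The remaining axioms of Definition~\ref{projection connection} are then free: $H_1\cap H_2=H$ is $G$-invariant hence $K_i$-invariant, and $\pi_*|_H$ is already a fibrewise isomorphism onto $\pi^*TM$ since $H$ is a connection on $P(G,M)$. The moreover clause for the core $P(G_0,P_1\times_M P_2)$ is handled by the same template: the decomposition $TP=(H\oplus\widehat{s(\mathfrak{g}_1\oplus\mathfrak{g}_2)})\oplus\widehat{\mathfrak{g}_0}$ is automatic, and $G_0$-invariance of $H\oplus\widehat{s(\mathfrak{g}_1\oplus\mathfrak{g}_2)}$ reduces to $[\mathfrak{g}_0,s(\mathfrak{g}_1\oplus\mathfrak{g}_2)]\subset s(\mathfrak{g}_1\oplus\mathfrak{g}_2)$, which collapses to $0$ because the left side already lies in the ideal $\mathfrak{g}_0$. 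The only genuinely delicate point in the whole argument is this ideal-versus-splitting observation; everything else is bookkeeping.
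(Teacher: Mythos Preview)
Your proposal is correct and follows essentially the same route as the paper's proof: reduce $K_i$-invariance of $H_i$ to $[\mathfrak{k}_1,s(\mathfrak{g}_1)]\subset s(\mathfrak{g}_1)$ (and its analogue), then observe that this bracket already lands in $\mathfrak{g}_0$, which is transverse to $s(\mathfrak{g}_1)$, forcing it to vanish. The only cosmetic difference is that the paper packages the key observation as $[\mathfrak{k}_1,\mathfrak{k}_2]\subset\mathfrak{g}_0$ via normality of $K_1,K_2$ in $G$, whereas you split it into ``$\mathfrak{g}_0$ is an ideal'' plus ``$\varphi$ kills $[s(\mathfrak{g}_1),s(\mathfrak{g}_2)]$''; these are equivalent, and your version is arguably more transparent about why each piece lands in $\mathfrak{g}_0$.
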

\begin{proof}
	To see that $H_1$ is a connection in $P(K_1,P_1)$, first note that $\mathfrak{k_1}=\mathfrak{g}_0\oplus s(\mathfrak{g}_2)$. Following from $TP=H\oplus \hat{\mathfrak{g}}$, we have $TP=H_1\oplus \widehat{\mathfrak{k_1}}$. Next, $H_1$ being $K_1$-invariant requires that $[\mathfrak{k}_1,s(\mathfrak{g}_1)]\subset s(\mathfrak{g}_1)$. Since $K_1,K_2$ are normal groups of $G$, we have $[\mathfrak{k}_1,\mathfrak{k}_2]\subset \mathfrak{g}_0$. Also, observe that $s(\mathfrak{g}_1)\subset \mathfrak{k}_2$ and  $s(\mathfrak{g}_1)\cap \mathfrak{g}_0=\{0\}$.  The requirement $[\mathfrak{k}_1,s(\mathfrak{g}_1)]\subset s(\mathfrak{g}_1)$ becomes $[\mathfrak{k}_1,s(\mathfrak{g}_1)]=0$, which is equivalent to the conditions $[\mathfrak{g}_0,s(\mathfrak{g}_1)]=0$ and $[s(\mathfrak{g}_2),s(\mathfrak{g}_1)]=0$. The conditions for $H_2$ being a connection in $P(K_2,P_2)$ is similar to get. 

	By the fact that $TP=H\oplus \hat{\mathfrak{g}}$, we have $TP=(H\oplus \widehat{s(\mathfrak{g}_1\oplus \mathfrak{g}_2)})\oplus \widehat{\mathfrak{g}_0}$. Then, $H\oplus \widehat{s(\mathfrak{g}_1\oplus \mathfrak{g}_2)}$ is $G_0$-invariant iff $\widehat{s(\mathfrak{g}_1\oplus \mathfrak{g}_2)}$ is $G_0$-invariant, iff $[\mathfrak{g}_0,s(\mathfrak{g}_1\oplus \mathfrak{g}_2)]=0$.
\end{proof} 
\begin{Ex}
	Suppose that the total space $P$ of a $\mathbb{G}$-DPB $\mathbb{P}=(P;P_1,P_2;M)$ is equipped with a $G$-invariant Riemannian metric. Define a distribution $H_i:=\widehat{\mathfrak{k}_i}^\perp$ $(\mathfrak{k}_i=\Lie(K_i))$, that is, $TP=H_i\oplus \mathfrak{k}_i$. Then $H_i$ is naturally a connection in $P(K_i,P_i)$ $(i=1,2)$. Consider 
    the intersection \[H_1\cap H_2=\widehat{\mathfrak{k}_1}^\perp\cap \widehat{\mathfrak{k}_2}^\perp=(\widehat{\mathfrak{k}_1}+\widehat{\mathfrak{k}_2})^\perp=\hat{\mathfrak{g}}^\perp.\]
    We find it
    is a connection in the principal $G$-bundle $P(G,M)$ and $(H_1,H_2)$ is a connection in the DPB $\mathbb{P}$. We also get a connection $\pi_{i,*}(H_1\cap H_2)$ in the principal $G_i$-bundle $P_i (i=1,2)$. Moreover, the summation
	\[H_1+H_2=\widehat{\mathfrak{k}_1}^\perp+\widehat{\mathfrak{k}_2}^\perp=(\widehat{\mathfrak{k}_1}\cap \widehat{\mathfrak{k}_2})^\perp=\widehat{\mathfrak{g}_0}^\perp\] is a connection in the core $P(\widehat{G}_0,P_1\times _M P_2)$.
\end{Ex} 

In Theorem \ref{Push-forward a pair}, we push forward a pair of connections in $P(K_i,P_i)$ for $i=1,2$ to obtain connections in $P_1$ and $P_2$. Now we move to consider the push-forward of one connection in $P(K_2,P_2)$ to achieve a connection in $P_1$.
\begin{Pro}
	Let $\mathbb{P}=(P;P_1,P_2;M)$ be a DPB. The push-forward of a connection $H_2$ in $P(K_2,P_2)$ by $\pi_1$ gives a connection in $P_1$ if and only if $H_2$ satisfies
	\[
		R_{k_1 *} H_{2}+\widehat{\mathfrak{k}_1} \subset H_2\oplus \widehat{\mathfrak{g}_0},\quad \quad \forall k_1\in K_1.
	\]
\end{Pro}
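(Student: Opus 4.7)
The plan is to define $L_{p_1}:=(\pi_1)_{*,p}(H_{2,p})$ for any $p\in \pi_1^{-1}(p_1)$ and translate the three defining properties of a connection in $P_1$ — well-definedness in $p$, $G_1$-invariance, and horizontality $L\oplus\widehat{\mathfrak{g}_1}=TP_1$ — into pointwise linear algebra on $H_2$. Using only $H_2\oplus\widehat{\mathfrak{k}_2}=TP$ and the standing hypothesis (so that $\widehat{\mathfrak{g}}=\widehat{\mathfrak{k}_1}+\widehat{\mathfrak{k}_2}$), one obtains $H_2\cap\widehat{\mathfrak{g}_0}=0$, $H_2+\widehat{\mathfrak{g}}=TP$, and $\dim(H_2\cap\widehat{\mathfrak{g}})=\dim G_2$. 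Comparing $(H_2\cap\widehat{\mathfrak{k}_1})\oplus\widehat{\mathfrak{g}_0}\subset\widehat{\mathfrak{k}_1}$ dimensionally then yields the pointwise equivalence
\[
	\widehat{\mathfrak{k}_1}\subset H_{2}\oplus\widehat{\mathfrak{g}_0}\iff \dim(H_{2}\cap\widehat{\mathfrak{k}_1})=\dim G_2.
\]

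With these preliminaries the three properties separate cleanly. From $\lambda_{1*}\circ(\pi_1)_*=\pi_*$, the image $\lambda_{1*,p_1}(L_{p_1})=\pi_{*,p}(H_{2,p})$ is always all of $T_{\pi(p)}M$ by the dimension identities, so horizontality of $L_{p_1}$ reduces to $\dim L_{p_1}=\dim M$, hence to the pointwise inclusion $\widehat{\mathfrak{k}_1}\subset H_2\oplus\widehat{\mathfrak{g}_0}$. The $G_1$-invariance of $L$ is automatic: for $g_1\in G_1$, pick $k_2\in K_2$ with $\phi_1(k_2)=g_1$; then $\pi_1\circ R_{k_2}=R_{g_1}\circ \pi_1$ combined with $K_2$-invariance of $H_2$ gives $L_{p_1g_1}=R_{g_1*}L_{p_1}$. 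Well-definedness, via $\pi_1\circ R_{k_1}=\pi_1$ for $k_1\in K_1$, is equivalent to
\[
	R_{k_1*}H_{2,p}+\widehat{\mathfrak{k}_1}|_{pk_1}=H_{2,pk_1}+\widehat{\mathfrak{k}_1}|_{pk_1}.
\]

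Finally I pair these characterizations with the hypothesis. For $(\Leftarrow)$, setting $k_1=e$ in the hypothesis gives $\widehat{\mathfrak{k}_1}\subset H_2\oplus\widehat{\mathfrak{g}_0}$, hence horizontality and $H_{2,pk_1}+\widehat{\mathfrak{k}_1}=H_{2,pk_1}\oplus\widehat{\mathfrak{g}_0}$; the hypothesis places $R_{k_1*}H_{2,p}+\widehat{\mathfrak{k}_1}$ inside this same subspace, and since $R_{k_1*}$ preserves $\widehat{\mathfrak{k}_1}$ both sides have equal dimension, so the inclusion is an equality and well-definedness follows. For $(\Rightarrow)$, horizontality forces $\widehat{\mathfrak{k}_1}\subset H_2\oplus\widehat{\mathfrak{g}_0}$ so $H_{2,pk_1}+\widehat{\mathfrak{k}_1}=H_{2,pk_1}\oplus\widehat{\mathfrak{g}_0}$; well-definedness then gives $R_{k_1*}H_{2,p}\subset H_{2,pk_1}\oplus\widehat{\mathfrak{g}_0}$, and adding $\widehat{\mathfrak{k}_1}\subset H_{2,pk_1}\oplus\widehat{\mathfrak{g}_0}$ produces the stated condition. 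The main obstacle is this bookkeeping split: the hypothesis must be factored into its horizontality content (the $k_1=e$ case) and its well-definedness content (general $k_1$); once this is done, smoothness of $L$ on $P_1$ follows by pushing $H_2$ along any local section of the principal $K_1$-bundle $\pi_1:P\to P_1$.
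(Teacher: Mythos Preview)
Your proof is correct and follows essentially the same approach as the paper: define $L_{p_1}=\pi_{1*,p}(H_{2,p})$, verify that $G_1$-invariance is automatic via the $K_2$-equivariance of $\pi_1$, characterize horizontality as the pointwise inclusion $\widehat{\mathfrak{k}_1}\subset H_2\oplus\widehat{\mathfrak{g}_0}$, and characterize well-definedness via the $R_{k_1*}$ condition. The only stylistic differences are that you establish the horizontality equivalence by dimension counting whereas the paper argues it by an element chase, and you make the final recombination into the stated hypothesis (splitting off the $k_1=e$ case) explicit where the paper leaves it to the reader.
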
  
\begin{proof}
   For any $p_1\in P_1$, define $L_{1,p_1}=\pi_{1*,p}H_{2,p}$ with $p$ such that $\pi_1(p)=p_1$. We check $L_1$ defines a connection in $P_1$ iff $H_2$ satisfies the above condition.
	First, note that $\ker \pi_{1*,p}=\widehat{\mathfrak{k}_1}_{,p_1}$. It is easily seen that  $L_{1,p_1}$ does not depend on the choice of $p$ if and only if $H_2$ satisfies
	\[
		R_{k_1 *} H_{2,p}\subset H_{2,pk_1}\oplus \widehat{\mathfrak{k}_1}_{,pk_1}.
	\]
	Then for $g_1\in G_1$, choose $k_2\in K_2$ such that $\phi_1(k_2)=g_1.$ By the fact that $\pi_1(pk_2)=\pi_1(p)g_1$, we find
	\[
		R_{g_1 *}L_{1,p_1}=R_{g_1 *}\pi_{1*,p}H_{2,p}=\pi_{1*,pk_2}R_{k_2*}H_{2,p}=\pi_{1*,pk_2}H_{2,pk_2}=L_{1,p_1g_1}.
	\]
	Namely, $L_1$ is $G_1$-invariant. Since $\pi_1$ is surjective and $\pi_{1*}(\widehat{\mathfrak{k}}_2)=\widehat{\mathfrak{g}}_1$, we have
	\[
		L_{1,p_1}+\widehat{\mathfrak{g}_1}_{,p_1}=T_{p_1} P_1.
	\]
	Then we claim that $L_{1,p_1}\cap \widehat{\mathfrak{g}_1}_{,p_1}=\{0\}$ if and only if $\widehat{\mathfrak{k}_1}_{,p}\subset H_{2,p}\oplus \widehat{\mathfrak{g}_0}_{,p}$. Actually, if the latter holds, assuming $\pi_{1*}(X)=\pi_{1*}(\hat{A})$ for $X\in H_{2,p}$ and $A\in \mathfrak{k}_2$, there exists $B\in\mathfrak{k}_1$ such that $X-\hat{A}=\hat{B}=B_2+\widehat{B_0}$, where $B_2\in H_{2,p}$ and $B_0\in \mathfrak{g}_0$. The condition $H_2\cap \widehat{\mathfrak{k}_2}=\{0\}$ implies that $X-B_2=\hat{A}+\widehat{B_0}=0$, so we get $A\in \mathfrak{g}_0$. Namely, $\pi_{1*}(X)=\pi_{1*}(\hat{A})=0$. For the `only if' direction, suppose $A\in \mathfrak{k}_1$ such that $\hat{A}_p=Y+\widehat{C}\in H_{2,p}\oplus \widehat{\mathfrak{k}_2}_{,p}$ and $\phi_1(C)\neq 0$. Then we have $\pi_{1*,p}(Y)=-\widehat{\phi_1(C)}\neq 0$, which implies $L_{1,p_1}\cap \widehat{\mathfrak{g}_1}_{,p_1}\neq\{0\}$, a contradiction. This ends the proof.
\end{proof}

An \emph{automorphism} of a principal bundle $P(G,M)$ is a diffeomorphism $f:P\to P$ such that $f(pg)=f(p)g$. Note that $f$ induces a well-defined diffeomorphism $\bar{f}: M\to M$ given by $\bar{f}(\pi(p))=\pi(f(p))$. An automorphism $f$ of a principal bundle is called a \emph{gauge transformation} if $\bar{f}=\id_M$. Set $\Aut(P)$ and $\GA(P)$ the groups of automorphisms and gauge transformations.
\begin{Def}
	An \emph{automorphism} of a DPB $\mathbb{P}=(P;P_1,P_2;M)$ is a diffeomorphism $f:P\to P$ such that $f\in \Aut(P(K_1,P_1))\cap \Aut(P(K_2,P_2))$. An automorphism of a DPB is called a \emph{gauge transformation} if  $\pi(p)=\pi(f(p))$. 
\end{Def}
Denote the sets of automorphisms and gauge transformations also by $\Aut(P)$ and $\GA(P)$ respectively.

Automorphisms and gauge transformations of a $\mathbb{G}$-DPB $\mathbb{P}$ are exactly transformations and gauge transformations of the principal $G$-bundle $P(G,M)$.
\begin{Lem}
	If $f\in \GA(P)$, then the induced map $f_1:P_1\to P_1$ defined by $
		f_1(p_1)=\pi_1(f(p))$ for any $p\in P$ such that $\pi_1(p)=p_1$
	is a gauge transformation of $P_1$. Likewise, we can get a gauge transformation of $P_2$.
\end{Lem}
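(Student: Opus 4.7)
The plan is to verify, in order, well-definedness, smoothness/bijectivity, $G_1$-equivariance, and the base-preserving property, using at each step exactly one of the structural features of the DPB (the $K_1$-equivariance of $f$, the epimorphism $\phi_1\colon K_2\to G_1$, and the identity $\pi=\lambda_1\circ\pi_1$).

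First I would show $f_1$ is well-defined. If $p,p'\in P$ both project to $p_1$, then $p'=p k_1$ for some $k_1\in K_1$, because $\pi_1$ makes $P$ a principal $K_1$-bundle over $P_1$. Since $f\in\Aut(P(K_1,P_1))$ by the definition of an automorphism of the DPB, we have $f(pk_1)=f(p)k_1$, and thus $\pi_1(f(p'))=\pi_1(f(p)k_1)=\pi_1(f(p))$. Smoothness and invertibility of $f_1$ follow by combining any local section of $\pi_1$ with $f$ (and similarly with $f^{-1}$ to produce a smooth inverse), so $f_1$ is a diffeomorphism of $P_1$.

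Next I would check that $f_1$ is $G_1$-equivariant, which is where the double structure really enters. For $g_1\in G_1$, surjectivity of $\phi_1\colon K_2\to G_1$ provides $k_2\in K_2$ with $\phi_1(k_2)=g_1$. Because $(\pi_1,\phi_1)\colon P(K_2,P_2)\to P_1(G_1,M)$ is a homomorphism of principal bundles, we have $\pi_1(p k_2)=\pi_1(p)\phi_1(k_2)=p_1 g_1$. Hence, using $f\in\Aut(P(K_2,P_2))$,
\begin{equation*}
f_1(p_1 g_1)=\pi_1(f(pk_2))=\pi_1(f(p)k_2)=\pi_1(f(p))\phi_1(k_2)=f_1(p_1)\,g_1.
\end{equation*}
Finally, the gauge condition is immediate: $\lambda_1(f_1(p_1))=\lambda_1(\pi_1(f(p)))=\pi(f(p))=\pi(p)=\lambda_1(p_1)$, where the middle equality uses $f\in\GA(P)$. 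The same argument with the roles of $(K_1,\pi_1,\phi_1)$ and $(K_2,\pi_2,\phi_2)$ interchanged yields the corresponding gauge transformation $f_2$ of $P_2$.

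I do not expect a genuine obstacle here; the only place where one must pause is the $G_1$-equivariance, where the need to lift $g_1\in G_1$ to an element of $K_2$ (rather than to a ``group element acting on $P_1$ directly'') is the reason the two exact sequences of structure groups in the definition of a DPB were built in. Everything else is a formal consequence of the definitions of $\GA(P)$ and $\Aut(P(K_i,P_i))$.
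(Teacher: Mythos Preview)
Your proof is correct and follows essentially the same approach as the paper: both lift $g_1$ to $k_2\in K_2$ via $\phi_1$ to verify $G_1$-equivariance, and both use $\pi=\lambda_1\circ\pi_1$ together with $f\in\GA(P)$ for the base-preserving property. You are in fact slightly more thorough, since you also spell out the well-definedness via $K_1$-equivariance and the smoothness/bijectivity of $f_1$, which the paper leaves implicit.
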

\begin{proof}
	 We need to check $f_1(p_1g_1)=f_1(p_1)g_1$ and $\lambda_1(p_1)=\lambda_1(f_1(p_1))$. Choosing an element $k_2\in K_2$ such that $\phi_1(k_2)=g_1$, we have
	 \[
	 	f_1(p_1g_1)=\pi_1(f(pk_2))=\pi_1(f(p)k_2)=\pi_1(f(p))\phi_1(k_2)=f_1(p_1)g_1.
	 \]
	 Moreover, we find
	\[
		\lambda_1(f_1(p_1))=\lambda_1(\pi_1(f(p)))=\pi(f(p))=\pi(p)=\lambda_1(p_1).
	\]
	Thus, we get $f_1$ is a gauge transformation of $P_1$.
\end{proof}
\begin{Pro}
	We have the infinite dimensional ``double Lie groups'', $\Aut(\mathbb{P})$ and    $\GA(\mathbb{P})$, as follows:
   \[
		\vcenter{\xymatrix{
			\Aut(P) \ar[d] \ar[r] &\Aut(P_2) \ar[d]\\
			\Aut(P_1) \ar[r] &\Diff(M)
		}},\qquad
		\vcenter{\xymatrix{
			\GA(P) \ar[d]^{\varphi_1} \ar[r]^{\varphi_2} &\GA(P_2) \ar[d]\\
			\GA(P_1) \ar[r] &\id_M
		}}.
	\]
\end{Pro}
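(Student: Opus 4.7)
The plan is to reduce this proposition to the lemma just proved and a straightforward extension of the same argument from $\GA$ to $\Aut$. The key data is: for any $f\in \Aut(\mathbb{P})$, we need induced maps $f_1\in\Aut(P_1)$, $f_2\in\Aut(P_2)$, and $\bar f\in\Diff(M)$ that fit the stated diagram, and we need to verify the group homomorphism properties so that the diagram of homomorphisms makes sense as a ``double Lie group'' in the sense of Section \ref{DPFB} (read in the infinite-dimensional, algebraic sense, i.e.\ as a short exact sequence of groups $1\to G_0\to G\to G_1\times G_2\to 1$, without claiming manifold structure on $G$).

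First I would construct the induced maps. Since $f\in\Aut(P(K_1,P_1))$ as an ordinary principal bundle automorphism, it covers a unique diffeomorphism $f_1:P_1\to P_1$ characterized by $\pi_1\circ f=f_1\circ\pi_1$; likewise $f_2$ from $f\in\Aut(P(K_2,P_2))$. Each $f_i$ is an automorphism of the principal $G_i$-bundle $P_i(G_i,M)$: $K_i$-equivariance of $f$ descends to $G_i$-equivariance of $f_i$ via the surjections $\phi_{i}$ (for $i\neq j$, $\pi_i\circ R_{k_j}=R_{\phi_i(k_j)}\circ\pi_i$). Hence $f_i$ covers a diffeomorphism $\bar f_i\in\Diff(M)$. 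The two candidates for the bottom arrow coincide: both $\bar f_1$ and $\bar f_2$ equal the map $\bar f$ defined by $\bar f(\pi(p)):=\pi(f(p))$, which is well-defined because $G=K_1K_2$ (so $\pi$ is $G$-invariant and $f$ is $G$-equivariant), proving commutativity of the diagram.

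Second, I would check the homomorphism structure. The assignments $f\mapsto f_i$ and $f\mapsto\bar f$ are group homomorphisms because composition and inversion of diffeomorphisms on $P$ descend compatibly via the projections $\pi_i$ and $\pi$. For the gauge-transformation diagram, $\bar f=\id_M$ holds by definition, and the preceding lemma already asserts that $f_i\in\GA(P_i)$; the bottom-right corner is the trivial group $\{\id_M\}$, so commutativity is automatic. The ``core'' of each double Lie group is identified as the kernel of $\varphi=(\varphi_1,\varphi_2)$: for $\GA(\mathbb{P})$ this is $\{f\in\GA(P)\mid \pi_1\circ f=\pi_1,\ \pi_2\circ f=\pi_2\}$, i.e.\ gauge transformations fixing every fiber of $(\pi_1,\pi_2):P\to P_1\times_M P_2$; by the first proposition of Section \ref{DPFB} this is exactly the group of gauge transformations of the core principal $G_0$-bundle $P_C$, giving a genuine extension $1\to\GA(P_C)\to\GA(\mathbb{P})\to\GA(P_1)\times\GA(P_2)$. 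The analogous statement for $\Aut(\mathbb{P})$ uses the subgroup of $\Aut(P)$ that projects to $\id\in\Diff(M)$ on the base.

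The main obstacle is verifying surjectivity onto $\GA(P_1)\times\GA(P_2)$ (respectively $\Aut(P_1)\times\Aut(P_2)$), which is needed to call these true exact sequences rather than just complexes. A pair $(f_1,f_2)$ with $\bar f_1=\bar f_2$ need not in general lift to a DPB automorphism $f$: the obstruction lives in cohomology of the $G_0$-bundle $P_C\to P_1\times_M P_2$, reflecting how the two principal structures are glued. In finite dimensions this is the content of the first proposition of Section \ref{DPFB}; in the present setting the surjectivity is best seen locally using the trivialization constructed in Proposition \ref{PoverM}, after which the standard argument for principal-bundle automorphisms applies in each variable. Once this lifting is in place, the two diagrams encode genuine (infinite-dimensional) short exact sequences of groups with cores $\GA(P_C)$ and the preimage $\Aut(P)_{|\id_M}$ respectively, and the proof is complete.
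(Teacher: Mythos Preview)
Your first two paragraphs are exactly right and constitute everything the paper actually needs: the paper states this proposition without proof, treating it as an immediate repackaging of the preceding lemma (which handles $f\mapsto f_i$ for gauge transformations) together with the routine extension to general automorphisms. The construction of $f_1,f_2,\bar f$ via the projections, the check that $\bar f_1=\bar f_2$, and the observation that these assignments are group homomorphisms are precisely the content.

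The third paragraph, however, contains a genuine gap disguised as a completed argument. You correctly note that a pair $(f_1,f_2)\in\GA(P_1)\times\GA(P_2)$ need not lift to $f\in\GA(P)$, but then assert that surjectivity ``is best seen locally using the trivialization constructed in Proposition~\ref{PoverM}''. Local lifting does not yield global lifting: patching local lifts requires compatibility on overlaps, and the obstruction you mention (living in the $G_0$-bundle $P_C$) does not vanish in general. The same failure already occurs one level down---$\Aut(P_i)\to\Diff(M)$ is not surjective for a generic principal bundle---so there is no hope of a general exactness statement here.

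This is why the paper writes ``double Lie groups'' in quotation marks: the diagrams record the commuting-square structure and the group homomorphisms, not a literal short exact sequence in the sense of Section~\ref{DPFB}. Your proof is complete once you stop at the end of your second paragraph; the attempt to establish surjectivity should be dropped, or at most replaced by a remark that the maps $\varphi_i$ need not be onto and the ``double Lie group'' terminology is informal.
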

The ``double Lie algebra'' of the automorphism ``double Lie group'' of a DPB $\mathbb{P}$ is
\[
	\vcenter{\xymatrix{
		\mathfrak{X}(P)^{K_1}\cap \mathfrak{X}(P)^{K_2}\ar[d] \ar[r]&\mathfrak{X}(P_2)^{G_2} \ar[d]\\
		\mathfrak{X}(P_1)^{G_1} \ar[r] &\mathfrak{X}(M)
	}}.
\]
Here $\mathfrak{X}(P)^{K_1}$ denotes the space of $K_1$-invariant vector fields on $P$, which is the section of the gauge algebroid $TP/K_1$ of the principal $K_1$-bundle $P(K_1,P_1)$.  The space $\mathfrak{X}(P_1)^{G_1}$, consisting of $G_1$-invariant vector field on $P_1$, is in fact the section of the gauge algebroid $TP_1/G_1$ of $P_1$.  Actually, the space $\mathfrak{X}(P)^{K_1}\cap \mathfrak{X}(P)^{K_2}$ is the  pre-image of $\mathfrak{X}(P_1)^{G_1}\times \mathfrak{X}(P_2)^{G_2}$ under the map $(\pi_{1*},\pi_{2*}):TP\to TP_1\times TP_2$.

In general, for a DPB $\mathbb{P}=(P;P_1,P_2,M)$, given a vector field on $M$, by two steps of horizontal lifts, we get a $K_1$-invariant vector field and a $K_2$-invariant vector field on $P$. But they may not coincide. With the existence of a connection in $\mathbb{P}$, we have the following proposition.
\begin{Pro}
	The map $\pi_*: \mathfrak{X}(P)^{K_1}\cap \mathfrak{X}(P)^{K_2}\rightarrow \mathfrak{X}(M)$ is surjective if there is a connection $(H_1,H_2)$ in a DPB $\mathbb{P}=(P;P_1,P_2,M)$.
\end{Pro}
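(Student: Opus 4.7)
The plan is to produce an explicit lift: given $X\in\mathfrak{X}(M)$, construct its horizontal lift $\tilde X\in\Gamma(H_1\cap H_2)\subset \mathfrak{X}(P)$ using the bundle isomorphism in Definition~\ref{projection connection}(2), and then verify that $\tilde X$ is both $K_1$-invariant and $K_2$-invariant and that $\pi_*\tilde X=X$.

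Concretely, by Definition~\ref{projection connection}(2) the map $\pi_*:(H_1\cap H_2)_p\to T_{\pi(p)}M$ is a linear isomorphism for every $p\in P$, and depends smoothly on $p$ because $H_1\cap H_2$ is a smooth subbundle. Hence for any $X\in\mathfrak{X}(M)$ we may define $\tilde X_p$ to be the unique element of $(H_1\cap H_2)_p$ with $\pi_*\tilde X_p=X_{\pi(p)}$; smoothness of $\tilde X$ follows from the smoothness of the bundle isomorphism. By construction $\pi_*\tilde X=X$.

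It remains to verify the $K_i$-invariance of $\tilde X$. The key observation is that for every $k_1\in K_1$ one has $\pi\circ R_{k_1}=\pi$, since $K_1$ acts along the fibres of $\pi_1$ and $\pi=\lambda_1\circ\pi_1$; similarly $\pi\circ R_{k_2}=\pi$ for $k_2\in K_2$. Fix $k_1\in K_1$ and $p\in P$. By Definition~\ref{projection connection}(1), $H_1\cap H_2$ is $K_1$-invariant, so $R_{k_1*}\tilde X_p\in(H_1\cap H_2)_{pk_1}$. Moreover,
\begin{equation*}
\pi_*(R_{k_1*}\tilde X_p)=(\pi\circ R_{k_1})_*\tilde X_p=\pi_*\tilde X_p=X_{\pi(p)}=X_{\pi(pk_1)}=\pi_*\tilde X_{pk_1}.
\end{equation*}
Because $\pi_*$ is injective on $(H_1\cap H_2)_{pk_1}$, this forces $R_{k_1*}\tilde X_p=\tilde X_{pk_1}$, so $\tilde X\in\mathfrak{X}(P)^{K_1}$. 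The same argument with $K_2$ in place of $K_1$ shows $\tilde X\in\mathfrak{X}(P)^{K_2}$, completing the proof.

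There is really no serious obstacle here: every ingredient is supplied by Definition~\ref{projection connection}, and the only point that requires care is checking that the horizontal lift into $H_1\cap H_2$ is simultaneously $K_1$- and $K_2$-equivariant, which is handled by the identity $\pi\circ R_{k_i}=\pi$ together with the joint invariance of the intersection.
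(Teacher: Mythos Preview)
Your proof is correct and follows the same approach as the paper. The paper's proof is the one-liner ``This is a direct consequence of $H_1\cap H_2\cong \pi^*(TM)$ and $\Gamma(H_1\cap H_2)\subset \mathfrak{X}(P)^{K_1}\cap \mathfrak{X}(P)^{K_2}$''; your argument is precisely the unpacking of that sentence, and in fact your explicit verification of the $K_i$-invariance of the horizontal lift is a useful clarification, since the paper's inclusion $\Gamma(H_1\cap H_2)\subset \mathfrak{X}(P)^{K_1}\cap \mathfrak{X}(P)^{K_2}$ is literally only true for horizontal lifts, not arbitrary sections.
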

\begin{proof}
	This is a direct consequence of $H_1\cap H_2\cong \pi^*(TM)$ and $ \Gamma(H_1\cap H_2)\subset \mathfrak{X}(P)^{K_1}\cap \mathfrak{X}(P)^{K_2}$.
\end{proof}
\begin{Pro}
	Let $f\in \Aut(\mathbb{P})$ be a gauge transformation of a DPB $\mathbb{P}$ and $(H_1,H_2)$ a connection in $\mathbb{P}$. Then $(f_*H_1,f_*H_2)$ is also a connection in $\mathbb{P}$.
\end{Pro}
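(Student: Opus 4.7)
The plan is to verify, one by one, the three conditions in Definition \ref{projection connection} for the pair $(f_*H_1, f_*H_2)$, using the fact that $f \in \GA(\mathbb{P})$ means $f$ is simultaneously a gauge transformation of both principal bundles $P(K_1,P_1)$ and $P(K_2,P_2)$, and moreover $\pi \circ f = \pi$.

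First I would show that each $f_*H_i$ is a connection in $P(K_i,P_i)$. Since $f$ is an automorphism of the principal $K_i$-bundle, it commutes with the $K_i$-action: $f \circ R_{k_i} = R_{k_i} \circ f$ for all $k_i \in K_i$. Taking derivatives gives $R_{k_i *} \circ f_* = f_* \circ R_{k_i *}$, so $K_i$-invariance of $H_i$ transfers to $f_*H_i$. In particular, because $f$ preserves the fiber and its action, it sends fundamental vector fields to fundamental vector fields, hence $f_*\widehat{\mathfrak{k}_i} = \widehat{\mathfrak{k}_i}$. Applying $f_*$ to the splitting $TP = H_i \oplus \widehat{\mathfrak{k}_i}$ then yields $TP = f_*H_i \oplus \widehat{\mathfrak{k}_i}$.

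Next I would observe the identity
\[
	f_*H_1 \cap f_*H_2 = f_*(H_1 \cap H_2),
\]
which holds because $f_*$ is a fiberwise linear isomorphism. The $K_i$-invariance of $H_1 \cap H_2$ together with the $R_{k_i}$-equivariance of $f_*$ shown above implies that $f_*(H_1 \cap H_2)$ is again $K_i$-invariant, giving condition (1) of Definition \ref{projection connection}.

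Finally, for condition (2), I would exploit the defining property of a gauge transformation, $\pi \circ f = \pi$, which yields $\pi_{*,p} \circ f_{*,f^{-1}(p)} = \pi_{*,f^{-1}(p)}$. For any $p \in P$ the diagram
\[
	\vcenter{\xymatrix{
		(H_1 \cap H_2)_{f^{-1}(p)} \ar[r]^{f_{*}}_{\sim} \ar[dr]_{\pi_*} & f_*(H_1 \cap H_2)_p \ar[d]^{\pi_*} \\
		& T_{\pi(p)}M
	}}
\]
commutes, and the diagonal arrow is an isomorphism by the hypothesis that $(H_1, H_2)$ is a connection in $\mathbb{P}$. Since $f_{*,f^{-1}(p)}$ is a linear isomorphism onto $(f_*H_1 \cap f_*H_2)_p$, the vertical arrow is an isomorphism as well. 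This exhausts the conditions of Definition \ref{projection connection}, so $(f_*H_1,f_*H_2)$ is a connection in $\mathbb{P}$.

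No single step here poses a real difficulty: the whole argument is a functoriality verification, and the key ingredient that makes it go through is the identity $\pi \circ f = \pi$ coming from the gauge-transformation condition, which ensures the isomorphism condition is preserved; the $K_i$-equivariance of $f$ takes care of the two invariance requirements.
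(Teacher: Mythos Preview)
Your proof is correct and follows essentially the same route as the paper's: verify that each $f_*H_i$ is a connection in $P(K_i,P_i)$, use the bijectivity of $f_*$ to identify $f_*H_1\cap f_*H_2 = f_*(H_1\cap H_2)$ and transfer $K_i$-invariance, and then use $\pi\circ f=\pi$ to show the projection to $T_{\pi(p)}M$ remains an isomorphism. One small terminological point: a gauge transformation of $\mathbb{P}$ need not be a gauge transformation of each $P(K_i,P_i)$ (only an automorphism, since $\pi_i\circ f$ need not equal $\pi_i$), but your argument only ever uses the $K_i$-equivariance and $\pi\circ f=\pi$, so this does not affect the validity.
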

\begin{proof}
	First, since $f\in \Aut(P(K_1,P_1))\cap \Aut(P(K_2,P_2))$, the distributions $f_* {H_1}$ and $f_* {H_2}$ are connections in $P(K_1,P_1)$ and $P(K_2,P_2)$ respectively. Then, since $f(pk_1)=f(p)k_1$ and $f(pk_2)=f(p)k_2$, we get that $f_* {H_1} \cap f_* {H_2}=f_* (H_1\cap H_2)$ is $K_1$ and $K_2$-invariant on the condition that $H_1\cap H_2$ is $K_1$ and $K_2$-invariant. At last, consider the map
	\[
		\pi^f_{*,p}:= (f_* {H_1} \cap f_* {H_2})_{p}=f_{*,p}(H_1\cap H_2)_{f^{-1}(p)}\to T_{\pi(p)} M.
	\]
	It is clear that $\pi^f_{*,p}=\pi_{*,f^{-1}(p)}\circ f^{-1}_{*,p}$, where
	\[
		\pi_{*,f^{-1}(p)}: (H_1\cap H_2)_{f^{-1}(p)}\to T_{\pi(f^{-1}(p))} M=T_{\pi(p)} M
	\]
	is an isomorphism. So $\pi^f_{*,p}$ is an isomorphism. Thus the pair $(f_* H_1,f_* H_2)$ is a connection in $\mathbb{P}$. 
\end{proof}

\vspace*{2cm}

\end{document}